\newtheorem{prop}{Proposition}[section]
\newtheorem{cor}[prop]{Corollary}
\newtheorem{lemma}[prop]{Lemma}
\newtheorem{rem}[prop]{Remark}
\newtheorem{thm}[prop]{Theorem}
\newtheorem{defi}[prop]{Definition}
\newtheorem{example}[prop]{Example}
\renewcommand{\geq}{\geqslant}
\def\leq{\leqslant}
\newcommand{\R}{\mathbb{R}}
\def\HH{\EuFrak H}
\def\e{\varepsilon}
\def\1{{\mathbf{1}}}
\def\sk{{\mathbb{D}}}
\def\1{{\mathbf{1}}}
\def\0.5{{\frac{1}{2}}}
\newenvironment{proof}[1]{\begin{trivlist}\item {\it
\bf Proof.}\quad} {\qed\end{trivlist}}
\newcommand{\qed}{\nopagebreak\hspace*{\fill}
{\vrule width6pt height6ptdepth0pt}\par}
\begin{document}

\title{\textbf{ 
Entropy and the fourth moment phenomenon
}}
\author{Ivan Nourdin, Giovanni Peccati and Yvik Swan }
\maketitle
\abstract{We develop a new method for bounding the relative entropy of
  a  random vector in terms of its Stein
  factors.  Our approach is based on a novel representation
  for the score function of smoothly perturbed random variables, as
  well as on the de Bruijn's formula of information theory.  When applied to
  sequences of functionals of a general Gaussian field, our results
  can be combined with the Carbery-Wright inequality in order to
  yield multidimensional entropic rates of convergence that coincide,
  up to a logarithmic factor,
   with those achievable in smooth distances
  (such as the 1-Wasserstein distance). In particular, our findings
  settle the open problem of proving a quantitative version of the
  multidimensional {\it fourth moment theorem} for random vectors
  having chaotic components, with explicit rates of convergence in
  total variation that are independent of the order of the associated
  Wiener chaoses. The results proved in the present paper are outside
  the scope of other existing techniques, such as for instance the
  multidimensional Stein's method for normal approximations.

\

\noindent {\bf Keywords}: Carbery-Wright Inequality; Central Limit
Theorem; De Bruijn's Formula; Fisher Information; Fourth Moment
Theorem; Gaussian Fields; Relative Entropy; Stein factors.

\noindent{\bf MSC 2010:} 94A17; 60F05; 60G15; 60H07
}

\tableofcontents
\section{Introduction}
\label{sec:introduction}

\subsection{Overview and motivation}

The aim of this paper is to develop a new method for controlling the
relative entropy of a general random vector with values in $\R^d$, and
then to apply this technique to settle a number of open questions
concerning central limit theorems (CLTs) on a Gaussian space. Our
approach is based on a fine analysis of the 
(multidimensional)
 {\it de
  Bruijn's formula}, which provides a neat representation
of the derivative of the relative entropy (along the
Ornstein-Uhlenbeck semigroup) in terms of the Fisher information of
some perturbed random vector -- see e.g. \cite{logsob_book,
  BakryEmery1985, BaBaNa03, Jo04, johnson2001entropy}. The main tool
developed in this paper (see Theorem~\ref{t:eulero} as well as relation
(\ref{key1})) is a new powerful representation of relative entropies
in terms of {\it Stein factors}. Roughly speaking, Stein factors
are random variables verifying a generalised integration by parts
formula (see (\ref{eq:26}) below): these objects naturally appear in
the context of the multidimensional {\it Stein's method} for normal
approximations (see e.g. \cite{ChGoSh11, NP11}), and implicitly play a
crucial role in many probabilistic limit theorems on Gaussian or other
spaces (see e.g. \cite[Chapter 6]{NP11}, as well as
\cite{nourdin2012absolute, NPRaop, NRaop}).

\medskip

The study of the classical CLT for sums of independent random elements
by entropic methods dates back to Linnik's seminal paper
\cite{LI59}. Among the many fundamental contributions to this line of
research, we cite \cite{artstein2004solution, ABBNptrf, BaBaNa03,
  BA86, MR2128239, BCG2012, Br82, carlen1991entropy, johnson2001entropy}
(see the monograph \cite{Jo04} for more details on the history of the
theory). All these influential works revolve around a deep
analysis of the effect of analytic convolution on the creation of
entropy: in this respect, a particularly powerful tool are the
`entropy jump inequalities' proved and exploited e.g. in
\cite{ABBNptrf, BaBaNa03, MR2128239, BaNg}. As discussed e.g. in
\cite{BaNg}, entropy jump inequalities are directly connected with
challenging open questions in convex geometry, like for instance the
{\it Hyperplane} and {\it KLS} conjectures. One of the common traits
of all the above references is that they develop tools to control the
Fisher information and  use the aforementioned de
Bruijn's formula to translate the bounds so obtained into bounds on
the relative entropy.

\medskip

One of the main motivations of the present paper is to initiate a
systematic information-theoretical analysis of a large class of CLTs
that has emerged in recent years in connection with different branches
of modern stochastic analysis. These limit theorems typically involve:
(a) an underlying infinite dimensional Gaussian field ${\bf G}$ (like for
instance a Wiener process), (b) a sequence of rescaled centered random
vectors $F_n = F_n({\bf G})$, $n\geq 1$, having the form of some highly
non-linear functional of the field ${\bf G}$. For example, each $F_n$ may be
defined as some collection of polynomial transformations of
${\bf G}$, possibly depending on a parameter that is integrated with respect
to a deterministic measure (but much more general forms are
possible). Objects of this type naturally appear e.g. in the
high-frequency analysis of random fields on homogeneous spaces
\cite{MP11}, fractional processes \cite{NouBookFBM, NuOrt}, Gaussian
polymers \cite{viensSPA}, or random matrices \cite{ChPtrf2009,
  NPalea}.

\medskip

In view of their intricate structure, it is in general not possible to
meaningfully represent the vectors $F_n$ in terms of some linear
transformation of independent (or weakly dependent) vectors, so that
the usual analytical techniques based on stochastic independence and
convolution (or mixing) cannot be applied. To overcome these
difficulties, a recently developed line of research (see \cite{NP11}
for an introduction) has revealed that, by using tools from
infinite-dimensional Gaussian analysis (e.g. the so-called {\it
  Malliavin calculus of variations} -- see \cite{Nu06}) and under some
regularity assumptions on $F_n$, one can control the distance between
the distribution of $F_n$ and that of some Gaussian target by means of
quantities that are no more complex than the fourth moment of
$F_n$. The regularity assumptions on $F_n$ are usually expressed in
terms of the projections of each $F_n$ on the eigenspaces of the
Ornstein-Uhlenbeck semigroup associated with ${\bf G}$ \cite{NP11, Nu06}.

\medskip

In this area, the most prominent contribution is arguably the
following one-dimensional inequality established by the first two
authors (see, e.g.,  \cite[Theorem 5.2.6]{NP11}): let $f$ be the density of a random variable $F$, assume
that $\int_\R x^2 f(x)dx=1$, and that $F$ belongs to the $q$th
eigenspace of the Ornstein-Uhlenbeck semigroup of 
${\bf G}$ 
(customarily
called the $q$th {\it Wiener chaos} of ${\bf G}$), then
\begin{equation}\label{e:nunugio}
\frac 14 \int_{\R} \left| f(x)  - \phi_1(x) \right| dx \leq  \sqrt{\frac{1}{3} - \frac{1}{3q}} \times \sqrt{\int_\R x^4(f(x)-\phi_1(x))dx},
\end{equation}
where $\phi_1(x) = (2\pi)^{-1/2} \exp( - x^2/2)$ is the standard Gaussian density (one can prove that $\int_\R x^4(f(x)-\phi_1(x))dx>0$ for $f$ as above). Note that $ \int_\R x^4\phi_1(x)dx = 3$.
A standard use of hypercontractivity therefore allows one to deduce the so-called {\it fourth moment theorem} established in \cite{NunuGio}: {\em for a rescaled sequence $\{F_n\}$ of random variables living inside the $q$th Wiener chaos of ${\bf G}$, one has that $F_n$ converges in distribution to a standard Gaussian random variable if and only if the fourth moment of $F_n$ converges to $3$ (and in this case the convergence is in the sense of total variation)}. See also \cite{NuOrt}.

\medskip

The quantity $\sqrt{\int_\R x^4(f(x)-\phi_1(x))dx}$ appearing in (\ref{e:nunugio}) is often called the {\it kurtosis} of the density $f$: it provides a rough measure of the discrepancy between the `fatness' of the tails of $f$ and $\phi_1$. The systematic emergence of the normal distribution from the reduction of kurtosis, in such a general collection of probabilistic models, is a new phenomenon that we barely begin to understand.  A detailed discussion of these results can be found in \cite[Chapters 5 and 6]{NP11}. M. Ledoux \cite{LedouxAOP} has recently proved a striking extension of the fourth moment theorem to random variables living in the eigenspaces associated with a general Markov operator, whereas references \cite{DNN, KNPS} contain similar statements in the framework of free probability. See also \cite{np-jfa, npr-jfa}, respectively, for a connection with second order Poincar\'e inequalities, and for a general analytical characterisation of cumulants on the Wiener space.
\medskip

The estimate (\ref{e:nunugio}) is obtained by combining the Malliavin calculus of
variations with the Stein's method for normal approximations
\cite{ChGoSh11, NP11}. Stein's method can be roughly described as a
collection of analytical techniques, allowing one to measure the
distance between random elements by controlling the regularity of the
solutions to some specific ordinary (in dimension 1) or partial (in
higher dimensions) differential equations. The needed estimates are
often expressed in terms of the same Stein factors that lie at the
core of the present paper (see Section \ref{ss:key} for
definitions). It is important to notice that the strength of these
techniques significantly breaks down when dealing with normal
approximations in dimension {\it strictly} greater than $1$. 

\medskip

For instance, in view of the
structure of the associated PDEs, for the time being there is no way
to directly use Stein's method in order to deduce bounds in the
multidimensional total variation distance. (This fact is demonstrated e.g. in references
\cite{MR2453473, MR2573554}, where multidimensional bounds are obtained for distances involving smooth test functions, as well as in \cite{Goetze}, containing a quantitative version of the classical multidimensional CLT, with bounds on distances involving test functions that are indicators of convex sets: all these papers use some variations of the multidimensional Stein's method, but none of them achieves bounds in the total variation distance.)

\medskip

In contrast, the results of
this paper allow one to deduce a number of information-theoretical
generalisations of (\ref{e:nunugio}) that are valid in any
dimension. It is somehow remarkable that our techniques make a
pervasive use of Stein factors, without ever applying Stein's
method. As an illustration, we present here a multidimensional
entropic fourth moment bound that will be proved in full generality in
Section \ref{s:gauss}. For $d\geq 1$, we write $\phi_d({\bf
  x})=\phi_d(x_1,...,x_d)$ to indicate the Gaussian density
$(2\pi)^{-d/2} \exp( - (x_1^2+\cdots + x_d^2)/2)$,
$(x_1,...,x_d)\in\R^d $.  From now on, every random object is assumed to be defined on a common probability space $(\Omega,
\mathcal{F}, P)$, with $E$ denoting expectation with respect to
$P$.

\begin{thm}[Entropic fourth moment bound]\label{t:e4m} Let $F_n = (F_{1,n },
  ..., F_{d,n})$ be a sequence of $d$-dimensional random vectors such that: {\rm(i)} $F_{i,n}$ belongs to the $q_i$th Wiener chaos of ${\bf G}$, with $1\leq q_1\leq q_2\leq \cdots \leq q_d$; {\rm (ii)}
  each 
  $F_{i,n}$ 
  has variance $1$, {\rm (iii)} $E[F_{i,n}F_{j,n}] = 0$ for $i\neq j$, and {\rm (iv)} the law of $F_n$ admits a density $f_n$
  on $\R^d$. Write
  \[
  \Delta_n :=\int_{\R^d} \|{\bf x}\|^4 (f_n({\bf x})-\phi_d({\bf x}))d{\bf x},
  \]
  where $\|\cdot\| $ stands for the Euclidean norm, and assume that $\Delta_n\to 0$, as $n\to\infty$. Then,  \begin{equation}\label{e:mp}
\int_{\R^d} f_n({\bf x})\log\frac{f_n({\bf x})}{\phi_d({\bf x})} d{\bf x} = O(1)\,  \Delta_n |\log\Delta_n|,
\end{equation}
where $O(1)$ stands for a bounded numerical sequence, depending on $d, q_1,...,q_d$ and on the sequence $\{F_n\}$.
\end{thm}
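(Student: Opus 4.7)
The plan is to combine the Stein-factor representation of Theorem~\ref{t:eulero} with a multidimensional de Bruijn formula, and to control a near-zero singularity of the resulting time integral via the Carbery--Wright inequality. Set $F_n(t) := e^{-t} F_n + \sqrt{1 - e^{-2t}}\, Z$ with $Z \sim \mathcal{N}(0, I_d)$ independent of $F_n$, and let $J_*(F_n(t))$ denote the standardised Fisher information of $F_n(t)$ (i.e.\ the Fisher information of $F_n(t)$ relative to $\phi_d$). A multidimensional version of de Bruijn's identity writes the left-hand side of (\ref{e:mp}) as $\int_0^\infty J_*(F_n(t))\, dt$. Combining the key identity (\ref{key1}) with Theorem~\ref{t:eulero}, one bounds $J_*(F_n(t))$ by $(1 - e^{-2t})^{-1}$ times an $L^2$-norm of Stein factors for $F_n$. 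On each Wiener chaos these Stein factors are inverse Ornstein--Uhlenbeck images of explicit Malliavin-type objects, and the standard algebra underlying the multidimensional fourth moment theorem (projection onto chaoses together with assumptions (i)--(iii)) reduces their $L^2$-norm to an expression controlled by $\Delta_n$. This yields
\[
J_*(F_n(t)) \le \frac{C_1}{1 - e^{-2t}}\, \Delta_n,
\]
with $C_1 = C_1(d, q_1, \ldots, q_d)$.

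Since the right-hand side above fails to be integrable at $t = 0^+$, I would complement it with a second, $n$-uniform bound on a right-neighbourhood of $0$. Here the Carbery--Wright inequality plays a decisive role: each $F_{i,n}$ being (up to $L^2$-isometry) a polynomial of degree at most $q_i$ in an underlying Gaussian family, Carbery--Wright delivers a quantitative anti-concentration estimate for $F_n(t)$. Combining this with hypercontractivity on the chaos (which ensures that every $L^p$ norm of $F_{i,n}$ stays bounded uniformly in $n$), one derives a bound of the form $J_*(F_n(t)) \le C_2$ for $t \in (0, 1]$, with $C_2 = C_2(d, q_1, \ldots, q_d)$ independent of both $n$ and $t$.

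With the two estimates in hand, I would split the time-integral at a cutoff $t_0 \in (0, 1)$:
\[
\int_{\R^d} f_n({\bf x}) \log \frac{f_n({\bf x})}{\phi_d({\bf x})}\, d{\bf x} \le C_2\, t_0 + C_1 \Delta_n \int_{t_0}^{\infty} \frac{dt}{1 - e^{-2t}} \le C_2\, t_0 + C_3\, \Delta_n\, |\log t_0|,
\]
and the choice $t_0 = \Delta_n$ produces the announced $O(\Delta_n |\log \Delta_n|)$ rate. The main technical obstacle is the first bound above: converting the vector-valued Stein-factor representation of Theorem~\ref{t:eulero} into a scalar quantity proportional to $\Delta_n$ requires careful combination of the multidimensional fourth moment machinery (projection onto individual chaoses, orthogonality (iii), diagonal normalisation of the covariance, and fourth-cumulant identities) with the precise form of the score-function representation, and it is here that the chaotic structure of $F_n$ is genuinely used. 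The Carbery--Wright step, while technically delicate because one must work with the regularised density of $F_n(t)$ rather than $F_n$ itself, is then a more standard application.
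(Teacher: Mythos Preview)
Your first estimate and the overall split-and-optimize architecture match the paper. The genuine gap is the second estimate: the uniform bound $J_*(F_n(t))\le C_2$ for $t\in(0,1]$ cannot be obtained from Carbery--Wright plus hypercontractivity, and is in fact false in general. Take $d=1$, $q_1=2$ and $F=(G_1^2-1)/\sqrt 2$: the density behaves like $(x+1/\sqrt 2)^{-1/2}$ near its left endpoint, the score blows up like $(x+1/\sqrt 2)^{-1}$, and $J_{st}(F)=+\infty$; hence $J_*(F(t))\to+\infty$ as $t\to0^+$. Carbery--Wright controls small-ball probabilities $P(|Q|\le\alpha)$; it says nothing about $\int|\nabla f_t|^2/f_t$, and anti-concentration plus moment bounds do not convert into a Fisher-information bound in any direct way.

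The paper never attempts a uniform bound on Fisher information near the singularity. It keeps the Stein-factor representation
\[
J_{st}(F_s)=\frac{s^2}{1-s}\,E\Big[\sum_j E\big[\textstyle\sum_k({\bf 1}_{j=k}-\tau_F^{j,k}(F))Z_k\,\big|\,F_s\big]^2\Big]
\]
(paper's parametrization $F_s=\sqrt sF+\sqrt{1-s}Z$) and shows that the \emph{conditional expectation itself} decays like $(1-s)^{\alpha'}$ as $s\to1$, compensating the $(1-s)^{-1}$ weight. The decay is obtained via Lemma~\ref{lem:cooldual}, which reduces the $L^1$-norm of the conditional expectation to a supremum over bounded $g$, hence to the total-variation distance ${\bf TV}(\sqrt sF+\sqrt{1-s}\,{\bf x},F)$; Carbery--Wright enters only in Lemma~\ref{technical-but-nevertheless-crucial-lemma}, where it is used (through control of $\det\Gamma(F)$) to bound this TV distance by $c(1+\|{\bf x}\|_1)(1-s)^{\alpha}$. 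A H\"older interpolation against the $(2{+}\eta)$-moments of the Stein factors (bounded by hypercontractivity) then upgrades $L^1$-decay to $L^2$-decay. So Carbery--Wright is applied to a total-variation estimate, not to Fisher information. (Minor point: your bound $J_*\le(1-e^{-2t})^{-1}C_1\Delta_n$ drops the factor $e^{-4t}$ coming from the $s^2$ in the representation; as written, your integral $\int_{t_0}^\infty(1-e^{-2t})^{-1}\,dt$ actually diverges at $+\infty$.)
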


As in the one-dimensional case, one has always that $\Delta_n>0$ for
$f_n$ as in the previous statement. The quantity of the left-hand-side of (\ref{e:mp}) equals of course
the {\it relative entropy} of $f_n$. In view of the
Csiszar-Kullback-Pinsker inequality  (see  \cite{Cs, KullIEEE, PinskerBook}), according to which
\begin{equation}\label{ummaumma}
   \int_{\R^d} f_n({\bf x})\log\frac{f_n({\bf x})}{\phi_d({\bf x})} d{\bf x}\geq \frac12 \left(\int_{\R^d} \Big| f_n({\bf x}) -  \phi_d({\bf x})\Big| d{\bf x}\right)^2,
\end{equation}
relation (\ref{e:mp}) then translates in a bound on the square of the total
variation distance between $f_n$ and $\phi_d$, where the dependence in
$\Delta_n$ hinges on the order of the chaoses only via a
multiplicative constant. This bound agrees up to a logarithmic
factor with the estimates in smoother distances established in
\cite{NRaop} (see also \cite{NPRihp}), 
where it is proved that there exists a constant $K_0 =
K_0(d,q_1,...,q_d)$ such that
\[
{\bf W}_1(f_n,\phi_d) \leq K_0\,  \Delta_n^{1/2},
\]
where ${\bf W}_1$ stands for the usual Wasserstein distance of order 1. Relation (\ref{e:mp}) also drastically improves the bounds that can be deduced from \cite{nourdin2012absolute}, yielding that, as $n\to \infty$,
\[
\int_{\R^d} \Big| f_n({\bf x}) -  \phi_d({\bf x})\Big| d{\bf x} = O(1)\,  \Delta_n^{\alpha_d},
\]
where $\alpha_d$ is any strictly positive number verifying ${\alpha_d
  < \frac{1}{1+ (d+1)(3+4d(q_d-1))}}$, and the symbol $O(1)$ stands again for some bounded numerical sequence. The estimate (\ref{e:mp}) seems
to be largely outside the scope of any other available technique. Our
results will also show that convergence in relative entropy is a
necessary and sufficient condition for CLTs involving random vectors
whose components live in a fixed Wiener chaos. As in
\cite{nourdin2012absolute,nourdin2012convergence}, 
an important tool for establishing our
main results is the Carbery-Wright inequality \cite{CW}, providing
estimates on the small ball probabilities associated with polynomial
transformations of Gaussian vectors. Observe also that, via the
Talagrand's transport inequality \cite{TalagrandGFA}, our bounds
trivially provide estimates on the 2-Wasserstein distance
${\bf W}_2(f_n,\phi_d)$ between $f_n$ and $\phi_d$, for every $d\geq 1$. Notice once again that, in Theorem \ref{t:e4m} and its generalisations, {\it no additional regularity} (a part from the fact of being elements of a fixed Wiener chaos) is required from the components of the vector $F_n$. One should contrast this situation with the recent work by Hu, Lu and Nualart \cite{HLN}, where the authors achieve fourth moment bounds on the supremum norm of the difference $f_n - \phi_d$, under very strong additional conditions expressed in terms of the finiteness of negative moments of Malliavin matrices.

\medskip

We stress that, although our principal motivation comes from asymptotic
problems on a Gaussian space, the methods developed in Section
\ref{sec:prel-conc-inform} are general. In fact, at the heart of the
present work lie the powerful equivalences
\eqref{eq:17}--~\eqref{eq:django} (which can be considered as a new
form of so-called \emph{Stein identities}) that are valid under very
weak assumptions on the target density; it is also  easy to uncover a
wide variety of extensions and generalizations so that we
expect that our tools can be adapted to deal with a much
wider class of multidimensional distributions.

\medskip

The connection between  Stein identities and information theory has
already been noted in the  literature (although only in
dimension 1). For instance, explicit applications are known in the
context of Poisson and compound Poisson approximations
\cite{BaJoKoMa10, Sa12}, and recently several promising identities
have been discovered for some discrete
\cite{LS11c,sason2012entropy} as well as continuous distributions
\cite{kumar2009stein,LS12a,park2012equivalence}. However, with the
exception of \cite{LS12a}, the existing literature seems to be silent
about any connection between entropic CLTs and  Stein's identities  for
normal approximations. To the best of our
knowledge, together with \cite{LS12a} (which however focusses on
bounds of a completely different nature) the present paper contains
the first relevant study of the relations between the two topics.

\medskip

\noindent{\bf Remark on notation.} Given random vectors $X,Y$ with values in $\R^d$ ($d\geq 1$) and
densities $f_X, f_Y$, respectively, we
shall denote by ${\bf TV}(f_X, f_Y)$ and ${\bf W}_1(f_X,  f_Y)$ the {\it total
  variation} and 1-{\it Wasserstein} distances between $f_X$ and $f_Y$
(and thus  between the laws of $X$ and $Y$). Recall
that  we have the representations
\begin{align}
{\bf TV}(f_X, f_Y) &= \sup_{ A\in \mathscr{B}(\R^d) } \Big|
P[X\in A] -
P[Y\in A]\Big |\notag \\
&= \frac12 \sup_{\|h\|_\infty \leq 1 } \Big|
E [h(X)] - E[h(Y)]\Big | \label{umma}\\
&  = \frac12 \int_{\R^d} \Big| f_X({\bf x}) -  f_Y({\bf x})\Big| d{\bf x} =: \frac12 \| f_X - f_Y\|_1,\notag
\end{align}
where (here and throughout the paper) $d{\bf x}$ is shorthand for the
Lebesgue measure on $\R^d$, as well as
\begin{align*}
  {\bf W}_1(f_X, f_Y) &=\sup_{h\in {\rm Lip}(1)}
\Big| E[h(X) ] - E[h(Y)]\Big |.
\end{align*}
In order to simplify the discussion, we shall sometimes use the
shorthand notation 
\begin{align*}
  {\bf TV}(X, Y) = {\bf TV}(f_X, f_Y) \mbox{  and  } {\bf W}_1(X, Y) =
{\bf W}_1(f_X, f_Y).
\end{align*}
It is a well-known fact the the topologies induced by ${\bf TV}$ and
${\bf W}_1$, over the class of probability measures on $\R^d$, are strictly
stronger than the topology of convergence in distribution  (see
e.g. \cite[Chapter 11]{dudley_book} or \cite[Appendix
C]{NP11}). Finally, we agree that every logarithm in
the paper has base $e$.

\medskip

To enhance the readability of the text, the next Subsection \ref{ss:intromethod} contains an intuitive description of our method in dimension one.

\subsection{Illustration of the method in dimension one}\label{ss:intromethod}

\medskip

Let $F$ be a random variable with density $f:\R\to [0,\infty)$,
and let $Z$ be a standard Gaussian random variable with density
$\phi_1$. We shall assume that $E[F]=0$ and $E[F^2]=1$, and that $Z$
and $F$ are stochastically independent. As anticipated, we are
interested in bounding the relative entropy of $F$ (with respect to
$Z$), which is given by the quantity
\[
D(F\| Z) =  \int_\R f(x)\log(f(x)/\phi_1(x))dx.
\]
Recall also that, in view of the Pinsker-Csiszar-Kullback inequality, one has that
\begin{equation}\label{e:pinsker}
2{\bf TV}(f,\phi_1) \leq \sqrt{2 D(F || Z)}.
\end{equation}

\medskip

Our aim is to deduce a bound on $D(F\| Z)$ that is expressed in terms
of the so-called {\it Stein factor} associated with $F$. Whenever it exists, such a
factor is a mapping $\tau_F: \R \to \R$ that is uniquely determined (up to negligible sets)
 by requiring that  $\tau_F(F)\in L^1$ and 
\[
E[Fg(F)] = E[\tau_F(F) g'(F)]
\]
 for every smooth test function
$g$. Specifying $g(x) = x$ implies, in particular, that $E [\tau_F(F)]
= E [F^2] = 1$. It is easily seen
that, under standard regularity assumptions, a version of $\tau_F$ is
given by $\tau_F(x) =(f(x))^{-1} \int_x^\infty zf(z)dz$, for $x$ in the
support of $f$ (in particular, the Stein factor of $Z$ is 1). The relevance of the factor $\tau_F$ in comparing $F$
with $Z$ is actually revealed by the following {\it Stein's bound}
\cite{ChGoSh11, NP11}, which is one of the staples of Stein's method:
\begin{equation}\label{stein-ori}
{\bf TV}(f,\phi_1)= \sup \big|E[g'(F)]-E[Fg(F)]\big|,
\end{equation}
where the supremum runs over all continuously differentiable functions $g:\R\to\R$ satisfying $\|g\|_\infty\leq \sqrt{2/\pi}$ and
$\|g'\|_\infty\leq 2$. In particular, from (\ref{stein-ori}) one recovers the bound in total variation
 \begin{equation}\label{stein2}
 {\bf TV}(f,\phi_1)\leq 2\,E[|1-\tau_F(F)|],
 \end{equation}
providing a formal meaning to the intuitive fact that the
distributions of $F$ and $Z$ are close whenever $\tau_F$ is close to
$\tau_{Z}$, that is, whenever $\tau_F$ is close to 1. To motivate the reader, we shall now present a simple illustration of how the estimate (\ref{stein2}) applies to the usual CLT.
\begin{example} \label{ex:steinfactsum}
Let $\{F_i : i\geq 1\}$ be a sequence of i.i.d. copies of $F$, set $S_n = n^{-1/2}\sum_{i=1}^nF_i$ and assume that $E[\tau_F(F)^2] <+\infty$ (a simple sufficient condition for this to hold is e.g. that $f$ has compact support, and $f$ is bounded from below inside its support).  Then, using e.g. \cite[Lemma 2]{St86},
  \begin{equation}
    \label{eq:2}
    \tau_{S_n}(S_n) = \frac{1}{n}E \left[
    \sum_{i=1}^n \tau_{F}(F_i) \, \Big| \, S_n \right].
  \end{equation}
Since (by definition)  $E \left[
    \tau_{S_n}(S_n)  \right] = E \left[ \tau_{F}(F_i) \right]= 1$ for all
  $i=1, \ldots, n$ we get
  \begin{align}
    E \left[ (1- \tau_{S_n}(S_n))^2 \right] & = E \left[ \left(
        \frac{1}{n}E \left[ \sum_{i=1}^n (1-\tau_{F}(F_i)) \, \Big| \, F \right]\right)^2\right] \nonumber\\
&   \le   \frac{1}{n^2}E \left[
       \left( \sum_{i=1}^n (1-\tau_{F}(F_i)) \right)^2\right] \nonumber\\
& = \frac{1}{n^2} \mbox{\rm Var}\left(\sum_{i=1}^n (1-\tau_{F}(F_i))\right)
= \frac{ E \left[ (1-\tau_{F}(F))^2 \right]}{n}. \label{eq:3}
  \end{align}
In particular, writing $f_n$ for the density of $S_n$, we deduce from \eqref{stein2} that

\begin{equation}
  \label{eq:1}
  {\bf TV}(f_n, \phi_1) \le 2\frac{{\rm Var}(\tau_{F}(F))^{1/2}}{\sqrt n}.
\end{equation}

\end{example}

\smallskip

We shall demonstrate in Section~\ref{s:pregauss} and Section \ref{s:gauss} that the quantity
$E[|1-\tau_F(F)|]$ (as well as its multidimensional generalisations)
can   be explicitly controlled whenever $F$ is a smooth functional
of a Gaussian field. In view of these observations, the following
question is therefore natural: can one bound $D(F\| Z)$ by an
expression analogous to the right-hand-side of (\ref{stein2})?

\medskip

Our strategy for connecting $\tau_F(F)$ and $D(F\| Z)$ is based on an
integral version of the classical {\it de Bruijn's formula} of
information theory. To introduce this result, for $t\in [0,1]$ denote
by $f_t$ the density of $F_t = \sqrt{t} F + \sqrt{1-t} Z$, in such a
way that $f_1 = f$ and $f_0 = \phi_1$. Of course $f_t(x) = E \left[
  \phi_1 \left( (x- \sqrt{t}F)/{\sqrt{1-t}} \right)\right]/\sqrt{1-t}$
has support $\R$ and is $C^{\infty}$ for all $t<1$.  We shall denote
by $\rho_{t} = (\log f_t)'$ the {\it score function} of $F_t$ (which
is, by virtue of the preceding remark, well defined at all $t<1$
irrespective of the properties of $F$). For every $t<1$, the mapping
$\rho_t$ is completely characterised by the fact that
\begin{equation}\label{score}
E[g'(F_t)]=-E[g(F_t)\rho_t(F_t)]
\end{equation}
for every smooth test function $g$.
We also write, for $t\in[0,1)$,
\[
J(F_t)=E[\rho_t(F_t)^2]=\int_\R \frac{f_t'(x)^2}{f_t(x)}dx
\]
for the {\it Fisher information} of $F_t$, and we observe that
\[
0\leq E[(F_t+\rho_t(F_t))^2]=J(F_t)-1=:J_{st}(F_t),
\]
where $J_{st}(F_t)$ is the so-called {\it standardised Fisher information} of $F_t$ (note that $J_{st}(F_0) = J_{st}(Z)=0$). With this notation in mind, de Bruijn's formula (in an integral and rescaled version due to Barron \cite{BA86}) reads
\begin{equation}\label{key0}
D(F\|Z)=\int_0^1 \frac{J(F_t)-1}{2t}dt = \int_0^1 \frac{J_{st}(F_t)}{2t}dt
\end{equation}
(see Lemma \ref{l:db} below for a multidimensional statement).

\begin{rem}{
Using the standard relation $J_{st}(F_t)\leq t J_{st}(F) + (1-t)J_{st}(Z) = t J_{st}(F)$ (see e.g. \cite[Lemma 1.21]{Jo04}), we deduce the upper bound
\begin{equation}\label{shimizu}
D(F\|Z) \leq \frac12 J_{st}(F),
\end{equation}
a result which is often proved by using entropy power inequalities
(see also Shimizu \cite{Sh75}). Formula \eqref{shimizu} is a
quantitative counterpart to the intuitive fact that the distributions
of $F$ and $Z$ are close, whenever $J_{st}(F)$ is close to zero. Using
\eqref{e:pinsker} we further deduce that
closeness between the Fisher informations of $F$ and $Z$
(i.e. $J_{st}(F) \approx 0$) or between the entropies of $F$ and $Z$
(i.e. $D(F || Z) \approx 0$) both imply closeness in terms of the
total variation distance, and hence in terms of many more probability
metrics. This observation lies at the heart of the approach from
\cite{BA86,Br82,MR2128239} where a fine analysis of the behavior of
$\rho_F(F)$ over convolutions (through projection inequalities in the
spirit of \eqref{eq:2}) is used to provide explicit bounds on
the Fisher information distance which in turn are transformed, by
means of de Bruijn's identity \eqref{key0}, into bounds on the
relative entropy.  We will see in Section \ref{s:gauss} that the bound
\eqref{shimizu} is too crude to be of use in the applications we are
interested in.  }
\end{rem}

\medskip

Our key result in dimension 1 is the following statement (see Theorem
\ref{t:eulero} for a general multidimensional version), providing a
new representation of relative entropy in terms of Stein factors. From now on, we denote by $C^1_c$ the class of all functions $g:\R\to\R$ that are continuously differentiable and with compact support.

 \begin{prop}
Let the previous notation prevail. We have
\begin{equation}\label{key1}
D(F\|Z)=\frac12\int_0^1 \frac{t}{1-t}\,E\big[
E[Z(1-\tau_F(F))|F_t]^2
\big]
dt.
\end{equation}
\end{prop}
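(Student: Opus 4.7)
The plan is to reduce the statement to a computation of the standardised Fisher information $J_{st}(F_t)$ via de Bruijn's formula (\ref{key0}), and then to express this Fisher information in terms of $\tau_F$ through two successive applications of integration by parts: once for the Gaussian perturbation $Z$, and once for $F$ via its defining Stein identity. Since (\ref{key0}) gives $D(F\|Z)=\int_0^1\frac{J_{st}(F_t)}{2t}\,dt$, it is enough to establish
\[
J_{st}(F_t)=\frac{t^2}{1-t}\,E\big[E[Z(1-\tau_F(F))|F_t]^2\big].
\]

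First I would identify $\rho_t$. Since $Z$ is standard Gaussian and independent of $F$, Gaussian integration by parts in $Z$ (with $F$ frozen) yields $E[Zg(F_t)]=\sqrt{1-t}\,E[g'(F_t)]$ for every $g\in C^1_c$. Comparing this with the defining relation (\ref{score}) and conditioning on $F_t$ produces the compact identity
\[
\rho_t(F_t)=-\frac{1}{\sqrt{1-t}}\,E[Z|F_t].
\]
Splitting $F_t=\sqrt{t}F+\sqrt{1-t}Z$ and projecting everything onto $F_t$ gives, after elementary cancellation,
\[
F_t+\rho_t(F_t)=\sqrt{t}\,E[F|F_t]-\frac{t}{\sqrt{1-t}}\,E[Z|F_t].
\]

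The pivotal move is to rewrite $E[F|F_t]$ using the Stein factor $\tau_F$. For any $\varphi\in C^1_c$, the Stein identity for $F$ gives $E[F\varphi(F_t)]=\sqrt{t}\,E[\tau_F(F)\varphi'(F_t)]$, while a further Gaussian integration by parts applied to the product $\tau_F(F)\varphi(F_t)$ (now with $\tau_F(F)$ frozen and $\varphi(F_t)$ viewed as a function of $Z$) shows that $E[\tau_F(F)\varphi'(F_t)]=\frac{1}{\sqrt{1-t}}\,E[Z\tau_F(F)\varphi(F_t)]$. Chaining the two identities and invoking the arbitrariness of $\varphi$ forces
\[
E[F|F_t]=\frac{\sqrt{t}}{\sqrt{1-t}}\,E[Z\tau_F(F)|F_t] \quad \text{a.s.}
\]
Substituting back yields the telescoping
\[
F_t+\rho_t(F_t)=-\frac{t}{\sqrt{1-t}}\,E[Z(1-\tau_F(F))|F_t].
\]
Squaring, taking expectations and using the elementary identity $J_{st}(F_t)=E[(F_t+\rho_t(F_t))^2]$ (which follows by expanding the square together with $E[F_t^2]=1$ and $E[F_t\rho_t(F_t)]=-1$, the latter being (\ref{score}) applied to $g(x)=x$) produces the desired expression for $J_{st}(F_t)$, and substituting into (\ref{key0}) completes the proof.

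No single step is delicate --- each is a clean integration by parts --- so the main obstacle is conceptual rather than technical: one must recognise that pairing the Stein identity for $F$ with a \emph{second} Gaussian integration by parts for the perturbation $Z$ is exactly what causes the factor $(1-\tau_F(F))$ to materialise through telescoping. Regularity issues (smoothness of $\rho_t$, decay of boundary terms in each integration by parts) are automatic since $f_t$ is the convolution of the law of $\sqrt{t}F$ with a Gaussian density for every $t<1$.
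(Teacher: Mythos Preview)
Your proof is correct and arrives at exactly the same key representation $\rho_t(F_t)+F_t=-\frac{t}{\sqrt{1-t}}E[Z(1-\tau_F(F))|F_t]$ that the paper derives, using the same two ingredients (Gaussian integration by parts for $Z$ and the Stein identity for $F$). The only difference is organizational: the paper computes $E[Z(1-\tau_F(F))g(F_t)]$ directly and unwinds it in a single chain to reach $-\frac{\sqrt{1-t}}{t}E[(\rho_t(F_t)+F_t)g(F_t)]$, whereas you first isolate $\rho_t(F_t)=-\frac{1}{\sqrt{1-t}}E[Z|F_t]$ and $E[F|F_t]=\frac{\sqrt t}{\sqrt{1-t}}E[Z\tau_F(F)|F_t]$ as separate lemmas and then combine them---the underlying algebra is identical.
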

\begin{proof}{} Using  $\rho_Z(Z) = -Z$ we see that, for any function $g \in C^1_c$,
one has
\begin{align*}
&  E[Z(1-\tau_F(F))g(\sqrt{t}F+\sqrt{1-t}Z)]\\
&=\sqrt{1-t}\,E[(1-\tau_F(F))g'(\sqrt{t}F+\sqrt{1-t}Z)]\\
&=\sqrt{1-t}\big\{E[g'(F_t)]-\frac{1}{\sqrt{t}}
E[Fg(F_t)]\big\}\\
&=\frac{\sqrt{1-t}}{t}\big\{E[g'(F_t)]-
E[F_tg(F_t)]\big\}
=-\frac{\sqrt{1-t}}{t}\,E[(\rho_{t}(F_t)+F_t)g(F_t)],
\end{align*}
yielding the representation
\begin{equation}
  \label{eq:unidimrep}
  \rho_{t}(F_t)+F_t = -\frac{t}{\sqrt{1-t}}E[Z(1-\tau_F(F))|F_t].
\end{equation}
This implies
\[
J(F_t)-1 = E[(\rho_{t}(F_t)+F_t)^2] =
\frac{t^2}{1-t}E\big[
E[Z(1-\tau_F(F))|F_t]^2
\big],
\]
and the desired conclusion follows from  de Bruijn's identity (\ref{key0}).
\end{proof}

\bigskip

To properly control the integral on the right-hand-side of (\ref{key1}), we need
to deal with the fact that the mapping $t\mapsto \frac{t}{1-t}$ is not
integrable in $t=1$, so that we cannot directly apply the estimate $E\big[
E[Z(1-\tau_F(F))|F_t]^2
\big]\leq {\rm Var}(\tau_F(F))$ to deduce the desired bound. Intuitively, one has to exploit the fact that the mapping $t \mapsto E[Z(1-\tau_F(F))|F_t]$  satisfies
$E[Z(1-\tau_F(F))|F_1]=0$, thus in principle compensating for the singularity at $t \approx 1$. 

\medskip

As we will
see below, one can make this heuristic precise provided there
exist three constants $c,\delta,\eta>0$ such
that
\begin{equation}\label{guil}
E[|\tau_F(F)|^{2+\eta}]<\infty \mbox{   and   } E\big[
|E[Z(1-\tau_F(F))|F_t]|
\big]\leq c\,t^{-1}(1-t)^\delta,\quad 0<t\leq 1.
\end{equation}
Under the assumptions appearing in condition \eqref{guil},  the
following strategy can indeed be implemented in order to deduce a satisfactory bound.  First split the
integral in two parts: for every $0<\e\leq 1$,
\begin{align}
& 2\,D(F\|Z)\nonumber \\
& \leq
E[(1-\tau_F(F))^2]\,\int_0^{1-\e} \frac{t\,dt}{1-t}
+
\int_{1-\e}^1 \frac{t}{1-t}\,E\big[
E[Z(1-\tau_F(F))|F_t]^2
\big]
dt \nonumber \\
&\leq
E[(1-\tau_F(F))^2]\,|\log\e|
+
\int_{1-\e}^1 \frac{t}{1-t}\,E\big[
E[Z(1-\tau_F(F))|F_t]^2
\big]
dt,\label{eq:6}
\end{align}
the last inequality being a consequence of $\int_0^{1-\e}
\frac{t\,dt}{1-t}=\int_\e^{1} \frac{(1-u)du}{u}\leq \int_\e^{1}
\frac{du}{u}=-\log\e$.  To deal with the second term in \eqref{eq:6}, let us observe that, by using in particular the H\"older inequality
and the convexity of the function $x\mapsto |x|^{\eta+2}$, one deduces
from \eqref{guil} that
\begin{align}
&E\big[
E[Z(1-\tau_F(F))|F_t]^2
\big]\notag\\
&=
E\bigg[
|E[Z(1-\tau_F(F))|F_t]|^{\frac{\eta}{\eta+1}}
|E[Z(1-\tau_F(F))|F_t]|^{\frac{\eta+2}{\eta+1}}
\bigg]\notag\\
&\leq
E\big[
|E[Z(1-\tau_F(F))|F_t]|\big]^{\frac{\eta}{\eta+1}}\times
E\big[
|E[Z(1-\tau_F(F))|F_t]|^{\eta+2}\big]^{\frac{1}{\eta+1}}\notag\\
&\leq
c^{\frac{\eta}{\eta+1}}\,t^{-\frac{\eta}{\eta+1}}
(1-t)^{\frac{\delta\eta}{\eta+1}}
\times
E\big[|Z|^{\eta+2}\big]^{\frac{1}{\eta+1}}\times
E\big[|1-\tau_F(F)|^{\eta+2}\big]^{\frac{1}{\eta+1}}\notag\\
&\leq
c^{\frac{\eta}{\eta+1}}t^{-1}
(1-t)^{\frac{\delta\eta}{\eta+1}}\times 2\,
E\big[|Z|^{\eta+2}\big]^{\frac{1}{\eta+1}}\,
\big(1+E\big[|\tau_F(F)|^{\eta+2}\big]\big)^{\frac{1}{\eta+1}}\notag\\
&=C_{\eta}\,t^{-1}
(1-t)^{\frac{\delta\eta}{\eta+1}},\label{key3}
\end{align}
with
\[
C_\eta:=2c^{\frac{\eta}{\eta+1}}
E\big[|Z|^{\eta+2}\big]^{\frac{1}{\eta+1}}\,
\big(1+E\big[|\tau_F(F)|^{\eta+2}\big]\big)^{\frac{1}{\eta+1}}.
\]
By virtue of  \eqref{eq:6} and (\ref{key3}), the term $D(F\|Z)$ is eventually
amenable to analysis, and one obtains:
\begin{align*}
2\,D(F\|Z)&\leq
E[(1-\tau_F(F))^2]\,|\log\e|
+
C_\eta\int_{1-\e}^1 (1-t)^{\frac{\delta\eta}{\eta+1}-1}
dt\\
&=
E[(1-\tau_F(F))^2]\,|\log\e|
+
\frac{C_\eta(\eta+1)}{\delta\eta}\,\e^{\frac{\delta\eta}{\eta+1}}.
\end{align*}
Assuming finally that
$E[(1-\tau_F(F))^2]\leq 1$ (recall that, in the applications we are interested in, such a quantity is meant to be close to 0)
we can  optimize over $\e$ and choose $\e=
E[(1-\tau_F(F))^2]^{\frac{\eta+1}{\delta\eta}}$, which leads to
\begin{align}\notag
D(F\|Z)&\leq
\frac{\eta+1}{2\delta\eta}\,E[(1-\tau_F(F))^2]\,|\log E[(1-\tau_F(F))^2]| \\
&\quad+
\frac{C_\eta(\eta+1)}{2\delta\eta}\,E[(1-\tau_F(F))^2].
\label{keykey}
\end{align}
Clearly, combining (\ref{keykey}) with (\ref{e:pinsker}), one also obtains an
estimate in total variation which agrees with (\ref{stein2}) up to the
square root of a logarithmic factor.

\medskip

The problem is now how  to identify sufficient conditions on the
law of $F$ for (\ref{guil}) to hold; we shall address this issue by means of two
auxiliary results. We start with  a useful technical lemma, that has
been suggested to us by Guillaume Poly.

\begin{lemma}\label{lem:cooldual}
  Let $X$ be an integrable random variable and let $Y$ be a $\R^d$-valued random vector having an absolutely continuous distribution. Then
  \begin{equation}
    \label{eq:27}
E \left|  E \left[   X \, | \, Y \right]  \right| =
    \sup E \left[ X g(Y) \right],
  \end{equation}
where the supremum is taken over all $g\in C^1_c$ such that  $\|g \|_\infty\le 1$.
\end{lemma}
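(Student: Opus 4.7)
The plan is to reduce \eqref{eq:27} to a standard duality argument: first use the defining property of conditional expectation to rewrite everything in terms of a function of $Y$, then approximate the sign function by $C^1_c$ test functions with sup norm at most $1$.

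First I would fix a Borel version $h(y):=E[X\,|\,Y=y]$ of the conditional expectation, so that by the tower property $E[Xg(Y)]=E[h(Y)g(Y)]$ for every bounded Borel $g$. The easy direction $\sup E[Xg(Y)]\leq E|h(Y)|$ is then immediate, because $|h(Y)g(Y)|\leq |h(Y)|$ whenever $\|g\|_\infty\leq 1$, and $E|h(Y)|\leq E|X|<\infty$ since $X$ is integrable. To obtain the reverse inequality, one would like to insert $g=\operatorname{sgn}(h)$ into the right-hand side, which formally gives $E[h(Y)\operatorname{sgn}(h(Y))]=E|h(Y)|$; the whole issue is to approximate this Borel sign in $C^1_c$.

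The approximation would be carried out in two standard steps. First, for $R>0$ I truncate by setting $\phi_R(y):=\operatorname{sgn}(h(y))\,\mathbf{1}_{\{|y|\leq R\}}$, a bounded Borel function with $|\phi_R|\leq 1$ and compact support; dominated convergence (with integrable dominating function $|h(Y)|$) yields $E[h(Y)\phi_R(Y)]\to E|h(Y)|$ as $R\to\infty$. Second, fix a nonnegative mollifier $\rho_\delta\in C^\infty_c(\R^d)$ supported in the ball of radius $\delta$ with $\int\rho_\delta=1$, and set $g_{R,\delta}:=\phi_R\ast\rho_\delta$. Then $g_{R,\delta}\in C^\infty_c(\R^d)\subset C^1_c$ and $\|g_{R,\delta}\|_\infty\leq 1$.

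The main subtlety is passing to the limit $\delta\downarrow 0$: since $\phi_R$ is only Borel, one cannot invoke uniform convergence. Here I would use the Lebesgue differentiation theorem, which ensures $g_{R,\delta}(y)\to \phi_R(y)$ for Lebesgue-a.e.\ $y\in\R^d$. The absolute continuity assumption on the law of $Y$ is exactly what transfers this into $P$-a.s.\ convergence of $g_{R,\delta}(Y)$ to $\phi_R(Y)$; with dominating function $|h(Y)|\in L^1$, dominated convergence gives $E[h(Y)g_{R,\delta}(Y)]\to E[h(Y)\phi_R(Y)]$. A diagonal extraction in $(R,\delta)$ then produces a sequence $g_n\in C^1_c$ with $\|g_n\|_\infty\leq 1$ and $E[Xg_n(Y)]=E[h(Y)g_n(Y)]\to E|h(Y)|$, which combined with the trivial upper bound proves \eqref{eq:27}.
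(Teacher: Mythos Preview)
Your proof is correct and follows the same two-step structure as the paper's: the upper bound $\sup_g E[Xg(Y)]\le E|E[X\,|\,Y]|$ is trivial, and the lower bound is obtained by approximating $\operatorname{sgn}(E[X\,|\,Y])$ by functions in $C^1_c$ with sup norm at most $1$. The only difference lies in the approximation machinery. The paper dispatches the approximation in one line by invoking Lusin's theorem (which produces a continuous compactly supported approximant agreeing with the sign off a set of small $P_Y$-measure; a subsequent mollification, which converges uniformly since the approximant is already continuous, upgrades this to $C^1_c$). You instead mollify the discontinuous truncation $\phi_R$ directly and appeal to the Lebesgue differentiation theorem for the pointwise limit. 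Your route has the pedagogical virtue of making explicit exactly where the absolute-continuity hypothesis on the law of $Y$ enters: it is what promotes the Lebesgue-a.e.\ convergence $\phi_R\ast\rho_\delta\to\phi_R$ to $P$-a.s.\ convergence. In the Lusin route that hypothesis is in fact not needed (Lusin holds for any Radon measure on $\R^d$, and the final mollification converges uniformly), so the paper's argument, though terser, is slightly more general than stated.
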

\begin{proof}{}
Since $|\mbox{sign}(E[X|Y])|= 1$ we have, by using e.g. Lusin's Theorem,
\begin{equation*}
  E \left|  E \left[   X \, | \, Y \right]  \right| = E[ X
\mbox{sign}(E[X|Y])] \leq \sup E (X g(Y)).
\end{equation*}
To see the reversed inequality, observe that, for any $g$ bounded by 1,
\begin{equation*}
  |E (X g(Y))|= | E (E(X | Y) g(Y)) | \leq  E \left|  E \left[   X \, | \, Y \right]  \right|.
\end{equation*}
The lemma is proved.
\end{proof}
Our next statement  relates
\eqref{guil} to the problem of estimating the total variation distance
between $F$ and $\sqrt{t}F+\sqrt{1-t}x$ for any $x\in\R$ and $0<t\leq
1$.

\begin{lemma}\label{sec:lemgui}
Assume that, for some $\kappa,\alpha>0$,
\begin{equation}
{\bf TV}(\sqrt t F + \sqrt{1-t}\,x,F)\label{ineq-ivan}
\leq \kappa(1+|x|)t^{-1}(1-t)^{\alpha},\quad x\in\R,\,t\in(0,1].
\end{equation}
Then (\ref{guil}) holds, with $\delta=\frac12\wedge\alpha$
and $c=4(\kappa+1)$.
\end{lemma}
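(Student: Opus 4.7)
The plan is to apply Lemma~\ref{lem:cooldual} (with $X = Z(1-\tau_F(F))$ and $Y = F_t$, whose integrability and absolute continuity are easily verified) in order to reduce the problem to proving
\[
|E[Z(1-\tau_F(F))g(F_t)]| \leq 4(\kappa+1)\, t^{-1}(1-t)^{\frac12\wedge\alpha}
\]
uniformly over $g\in C^1_c$ with $\|g\|_\infty\leq 1$. Fixing such a $g$, I split
\[
E[Z(1-\tau_F(F))g(F_t)] = E[Zg(F_t)] - E[Z\tau_F(F)g(F_t)]
\]
and treat each piece separately; in both cases, the key observation is that $E[Z]=0$ allows me to rewrite $E[Z\Psi(Z)]=E[Z(\Psi(Z)-c)]$ for any constant $c$, so that only local control of $\Psi$ around a suitable reference point is required.

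For the first piece, I condition on $Z$ to obtain $E[Zg(F_t)]=E[Z\psi(Z)]$ with $\psi(z):=E[g(\sqrt tF+\sqrt{1-t}z)]$. Subtracting the constant $E[g(F)]$ and invoking the total variation representation (\ref{umma}) together with the hypothesis (\ref{ineq-ivan}) yield
\[
|\psi(z)-E[g(F)]| \leq 2\,{\bf TV}(\sqrt tF+\sqrt{1-t}z,F) \leq 2\kappa(1+|z|)t^{-1}(1-t)^\alpha,
\]
so that multiplying by $|Z|$ and integrating gives $|E[Zg(F_t)]| \leq 2\kappa(E|Z|+E|Z|^2)\,t^{-1}(1-t)^\alpha$.

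The $\tau_F$-weighted piece is the main obstacle, since the naive TV estimate is no longer available: weighting by the (possibly unbounded) $\tau_F(F)$ destroys the variational characterization of total variation. The central trick is to use the defining identity of the Stein factor to exchange the weight $\tau_F(F)$ for a factor $F$, at the cost of one antiderivative. Concretely, setting $\tilde G(u):=\int_{-\infty}^u g(v)\,dv$ and $H_z(f):=\tilde G(\sqrt tf+\sqrt{1-t}z)$, so that $H_z'(f)=\sqrt t\,g(\sqrt tf+\sqrt{1-t}z)$, the identity $E[FH_z(F)]=E[\tau_F(F)H_z'(F)]$ rewrites
\[
E[\tau_F(F)g(\sqrt tF+\sqrt{1-t}z)] = \frac{1}{\sqrt t}E[F\tilde G(\sqrt tF+\sqrt{1-t}z)].
\]
Although $\tilde G$ itself need not be uniformly small, it is $1$-Lipschitz because $\|g\|_\infty\leq 1$. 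Subtracting the constant $\frac{1}{\sqrt t}E[F\tilde G(F)]$ (again using $E[Z]=0$) and bounding the resulting increment via Lipschitzness together with $E[F^2]=1$, $E|F|\leq 1$, and the elementary inequality $1-\sqrt t\leq \sqrt{1-t}$, yields $|E[Z\tau_F(F)g(F_t)]| \leq (E|Z|+E|Z|^2)\,t^{-1/2}\sqrt{1-t}$.

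Combining the two estimates, using $E|Z|+E|Z|^2=\sqrt{2/\pi}+1\leq 2$, $(1-t)^\alpha\leq(1-t)^{\frac12\wedge\alpha}$, and $t^{-1/2}\sqrt{1-t}\leq t^{-1}(1-t)^{\frac12\wedge\alpha}$ (all valid for $t\in(0,1]$), one obtains $(4\kappa+2)t^{-1}(1-t)^{\frac12\wedge\alpha}\leq 4(\kappa+1)t^{-1}(1-t)^{\frac12\wedge\alpha}$, and taking the supremum over $g$ closes the argument. The hard part is precisely the $\tau_F$-weighted term: once one notices that the Stein-factor trick trades the unbounded $\tau_F(F)$ for $F$, the remaining work is essentially an exercise in Lipschitz approximation.
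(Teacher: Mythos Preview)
Your argument is correct. The overall structure---reduce via Lemma~\ref{lem:cooldual} to a uniform bound on $E[Z(1-\tau_F(F))g(F_t)]$, split into two pieces, and control the first via the TV hypothesis---is exactly what the paper does. The only genuine difference lies in how the $\tau_F$-weighted term $E[Z\tau_F(F)g(F_t)]$ is handled.

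The paper exploits the Gaussian score identity on $Z$ first: since $E[Zh(Z)]=E[h'(Z)]$, one gets $E[Z\tau_F(F)g(F_t)]=\sqrt{1-t}\,E[\tau_F(F)g'(F_t)]$, and then the Stein identity on $F$ (applied to $g$ itself, not to an antiderivative) turns this into $\sqrt{(1-t)/t}\,E[Fg(F_t)]$, which is bounded directly by $t^{-1}\sqrt{1-t}$ because $\|g\|_\infty\le 1$ and $E|F|\le 1$. Your route reverses the order of operations: you apply the Stein identity first, at the cost of introducing the antiderivative $\tilde G$, and then exploit its $1$-Lipschitz property together with $E[Z]=0$. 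Both arrive at the same bound (up to a harmless constant), and the final constant $4(\kappa+1)$ still comes out. The paper's route is marginally cleaner in that it avoids the detour through $\tilde G$ and the Lipschitz increment estimate, but your approach has the modest conceptual advantage that it never uses the Gaussian integration-by-parts on $Z$---only $E[Z]=0$---so it would adapt more readily to a non-Gaussian perturbation.
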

\begin{proof}{}
Take $g\in  C^1_c$ such that
$\|g\|_{\infty}\leq 1$. Then, by
independence of $Z$ and $F$,
\begin{align*}
   E \left[  Z (1-\tau_F(F)) g(F_t)  \right]
& = E \left[ g(F_t)Z \right]- E \left[ Zg(F_t) \tau_F(F) \right]
\nonumber \\
& = E \left[ g(F_t)Z \right]-  \sqrt{1-t} E \left[\tau_F(F)g'(F_t) \right]
\nonumber \\
& =  E \left[Z(g(F_t)-g(F))\right] - \sqrt{\frac{1-t}{t}} E
\left[ g(F_t) F \right]
\end{align*}
so that, since $\|g\|_{\infty}\leq 1$ and $E|F|\leq \sqrt{E[F^2]}=1$,
\begin{align*}
 \left|  E \left[  Z (1-\tau_F(F))g(F_t)  \right] \right|  & \leq  \left|  E
 \left[Z \left( g(F_t)-g(F) \right)\right]   \right|+  \sqrt{\frac{1-t}{t}}\\
 &\leq\left|  E
 \left[Z \left(  g(F_t)-g(F) \right)\right]   \right|+  t^{-1}\sqrt{1-t}.
\end{align*}
We have furthermore
\begin{align*}
\big|E \left[Z(g(F_t)-g(F))\right]\big|
&=\left|\int_\R xE[g(\sqrt t F + \sqrt{1-t}\,x)-g(F)]\phi_1(x)dx\right|\\
&\leq
2\int_\R |x|\,\, {\bf TV}(\sqrt t F + \sqrt{1-t}\,x,F)\phi_1(x)dx\\
&\leq
2\kappa t^{-1}(1-t)^{\alpha}\int_\R |x| (1+|x|)\phi_1(x)dx\\
&\leq
4\kappa\,t^{-1}\,(1-t)^{\alpha}.
\end{align*}
Inequality (\ref{guil}) now follows by applying Lemma \ref{lem:cooldual}.
\end{proof}

\bigskip

As anticipated, in Section \ref{s:gauss} 
(see Lemma \ref{technical-but-nevertheless-crucial-lemma} for a precise statement) 
we will describe a wide class of distributions satisfying
\eqref{ineq-ivan}. The previous discussion yields
finally the following statement,
answering the original question of providing a bound on $D(F\|Z)$ that
is comparable with the estimate (\ref{stein2}).
\begin{thm}\label{main1d}
Let $F$ be a random variable with density $f:\R\to [0,\infty)$,
satisfying  $E[F]=0$ and $E[F^2]=1$.
Let $Z\sim \mathscr{N}(0,1)$ be a standard Gaussian variable
(independent of $F$).
If, for some $\alpha,\kappa,\eta>0$, one has
\begin{equation}
E[|\tau_F(F)|^{2+\eta}]<\infty\label{needed}
\end{equation}
and
\begin{equation}
{\bf TV}(\sqrt t F + \sqrt{1-t}\,x,F)\label{ineq-ivan2}
\leq \kappa(1+|x|)t^{-1}(1-t)^{\alpha},\quad x\in\R,\,t\in(0,1],
\end{equation}
then, provided $\Delta:=E[(1-\tau_F(F))^2]\leq 1$,
\begin{align}
&D(F\|Z)\leq
\frac{\eta+1}{(1\wedge 2\alpha)\eta}\,\Delta\,|\log \Delta|
+
\frac{C_\eta(\eta+1)}{(1\wedge 2\alpha)\eta}\,\Delta,
\label{yes}
\end{align}
where
\[
C_\eta =
2(4\kappa+4)^{\frac{\eta}{\eta+1}}
E\big[|Z|^{\eta+2}\big]^{\frac{1}{\eta+1}}\,
\big(1+E\big[|\tau_F(F)|^{\eta+2}\big]\big)^{\frac{1}{\eta+1}}.
\]
\end{thm}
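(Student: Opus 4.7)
The statement is essentially a packaging of the computation developed in the preceding discussion, so my plan is to assemble the ingredients in the right order rather than invent anything new. The starting point is the representation \eqref{key1} of $D(F\|Z)$ as a time integral involving the conditional expectation $E[Z(1-\tau_F(F))\,|\,F_t]$. The hypothesis \eqref{ineq-ivan2} feeds directly into Lemma \ref{sec:lemgui}, yielding that \eqref{guil} holds with $\delta=\tfrac12\wedge\alpha$ and $c=4(\kappa+1)$; combined with the moment bound \eqref{needed}, this supplies precisely the two ingredients that drive the interpolation scheme.

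Concretely, I would first split the integral at a cut-off $1-\varepsilon$, as in \eqref{eq:6}. On $[0,1-\varepsilon]$ the elementary estimate $E\bigl[E[Z(1-\tau_F(F))|F_t]^2\bigr]\le E[(1-\tau_F(F))^2]=\Delta$ together with $\int_0^{1-\varepsilon}\tfrac{t}{1-t}dt\le|\log\varepsilon|$ gives a contribution bounded by $\Delta\,|\log\varepsilon|$. On $[1-\varepsilon,1]$, I would insert the H\"older interpolation \eqref{key3}, which combines the $L^1$ bound from \eqref{guil} with the $L^{2+\eta}$ bound coming from \eqref{needed} and $E|Z|^{2+\eta}<\infty$, to obtain
\[
E\bigl[E[Z(1-\tau_F(F))|F_t]^2\bigr]\le C_\eta\,t^{-1}(1-t)^{\delta\eta/(\eta+1)},
\]
with $C_\eta$ exactly as stated (once $c$ is replaced by $4(\kappa+1)$). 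Integrating $t(1-t)^{\delta\eta/(\eta+1)-1}/(1-t)\cdot t^{-1}$ over $[1-\varepsilon,1]$ then produces a term of order $\varepsilon^{\delta\eta/(\eta+1)}$.

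The final step is to optimize the sum $\Delta|\log\varepsilon|+\text{const}\cdot\varepsilon^{\delta\eta/(\eta+1)}$ over $\varepsilon\in(0,1]$. The (almost) optimal choice is $\varepsilon=\Delta^{(\eta+1)/(\delta\eta)}$, which is admissible precisely because the assumption $\Delta\le 1$ keeps $\varepsilon\le 1$. Substituting this value, using $2\delta=1\wedge 2\alpha$, and collecting terms yields the announced inequality \eqref{yes} with the prefactor $(\eta+1)/((1\wedge2\alpha)\eta)$ in front of both $\Delta|\log\Delta|$ and $\Delta$.

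I do not foresee any serious obstacle: the genuinely delicate step — turning the non-integrable singularity of $t/(1-t)$ at $t=1$ into a harmless logarithm by exploiting the compensation $E[Z(1-\tau_F(F))|F_1]=0$ — has already been carried out in \eqref{key3} via H\"older interpolation, which is where the hypothesis $E[|\tau_F(F)|^{2+\eta}]<\infty$ is really used. The only points requiring a small amount of care are bookkeeping: verifying that the constant $c=4(\kappa+1)$ from Lemma \ref{sec:lemgui} propagates correctly into $C_\eta$, and checking that $\varepsilon=\Delta^{(\eta+1)/(\delta\eta)}\le 1$ under the standing assumption $\Delta\le 1$ so that the splitting \eqref{eq:6} is legitimate.
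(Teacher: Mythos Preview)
Your proposal is correct and follows essentially the same route as the paper: the argument is exactly the assembly of \eqref{key1}, Lemma~\ref{sec:lemgui} (which converts \eqref{ineq-ivan2} into \eqref{guil} with $\delta=\tfrac12\wedge\alpha$, $c=4(\kappa+1)$), the splitting \eqref{eq:6}, the H\"older interpolation \eqref{key3}, and the optimization $\varepsilon=\Delta^{(\eta+1)/(\delta\eta)}$ leading to \eqref{keykey}, after which substituting $2\delta=1\wedge2\alpha$ gives \eqref{yes}. Your bookkeeping checks (propagation of $c=4\kappa+4$ into $C_\eta$ and admissibility of the chosen $\varepsilon$) are the only remaining details, and both are routine.
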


\subsection{Plan}

The rest of the paper is organised as follows. In Section
\ref{sec:prel-conc-inform} we will prove
that Theorem \ref{main1d} can be generalised to a fully
multidimensional setting. Section \ref{s:pregauss} contains some general results related to (infinite-dimensional) Gaussian stochastic analysis. Finally, in Section~\ref{s:gauss} we shall apply our
estimates in order to deduce general bounds of the type appearing in
Theorem \ref{t:e4m}.

\section{Entropy bounds via de Bruijn's identity and Stein matrices}
\label{sec:prel-conc-inform}
In Section \ref{ss:ent} and Section \ref{ss:db} we discuss some
preliminary notions related to the theory of information (definitions,
notations and main properties). Section \ref{ss:key} contains the
proof of a new integral formula, allowing one to represent the
relative entropy of a given random vector in terms of a Stein
matrix. The reader is referred to the monograph \cite{Jo04}, as well
as to \cite[Chapter 10]{logsob_book}, for any unexplained definition
and result concerning information theory.

\subsection{Entropy}\label{ss:ent}

Fix an integer $d\geq 1$. Throughout this section, we consider a
$d$-dimensional square-integrable and centered random vector $F =
(F_1,...,F_d)$ with covariance matrix $B>0$. We shall assume that the
law of $F$ admits a density $f = f_F$ (with respect to the Lebesgue
measure) with support $S\subseteq \R^d$. 
No other assumptions on the
distribution of $F$ will be needed. We recall that the {\it
  differential entropy} (or, simply, the {\it entropy}) of $F$ is
given by the quantity $\mbox{Ent}(F) := -E[\log f(F)] = - \int_{\R^d} f({\bf
  x})\log f({\bf x}) d{\bf x} = - \int_{S} f({\bf x})\log f({\bf x})
d{\bf x}$, where we have adopted (here and for the rest of the paper) the standard convention $0\log0 := 0.$ Note that $\mbox{Ent}(F) = \mbox{Ent}(F+c)$ for all $c \in
\R^d$, i.e. entropy is location invariant.

As discussed above, we are interested in estimating the distance
between the law of $F$ and the law of a $d$-dimensional centered
Gaussian vector $Z = (Z_1,...,Z_d) \sim \mathscr{N}_d(0,C)$, where
$C>0$ is the associated covariance matrix. Our measure of the
discrepancy between the distributions of $F$ and $Z$ is the {\it
  relative entropy} (often called {\it Kullback-Leibler divergence} or
{\it information entropy})
 \begin{equation}
D(F || Z) := E \left[\log (f(F)/\phi(F)) \right] = \int_{\R^d} f({\bf x}) \log\left(\frac{f({\bf x})}{\phi({\bf x})}\right) d{\bf x},
\end{equation}
where $\phi = \phi_d(\cdot\,  ; C)$ is the density of
$Z$. It is easy to compute the Gaussian entropy $\mbox{Ent}(Z) = {1}/{2} \log
\left( (2\pi e)^d |C|\right)$ (where $|C|$ is the determinant of $C$),
from which we deduce the following alternative expression for the relative entropy
\begin{equation}
  \label{eq:14}
  0\leq   D(F || Z) = \mbox{Ent}(Z) - \mbox{Ent}(F) + \frac{ \mbox{tr}(C^{-1}B)-d}{2},
\end{equation}
where `$\mbox{tr}$' stands for the usual trace operator. If $Z$ and $F$ have the
same covariance matrix then the relative entropy is simply the entropy
gap between $F$ and $Z$ so that, in particular, one infers from \eqref{eq:14} that  $Z$ has maximal entropy among all absolutely
continuous  random vectors with covariance matrix $C$.

We
stress that the relative entropy $D$ does not define a \emph{bona
  fide} probability distance (for absence of a triangle inequality, as
well as for lack of symmetry): however, one can easily translate
estimates on the relative entropy in terms of the total variation
distance, using the already recalled Pinsker-Csiszar-Kullback inequality 
(\ref{ummaumma}).
In the next subsection,
we show how one can represent the quantity $D(F || Z)$ as the integral
of the standardized Fisher information of some adequate interpolation
between $F$ and $Z$.

\subsection{Fisher information and de Bruijn's identity}\label{ss:db}

Without loss of generality, we may assume for the rest of the paper
that the vectors $F$ and $Z$ (as defined in the previous Section
\ref{ss:ent}) are stochastically independent. For every $t\in [0,1]$,
we define the centered random vector $F_t := \sqrt{t} F +
\sqrt{1-t}Z$, in such a way that $F_0 = Z$ and $F_1 = F$. It is clear
that $F_t$ is centered and has covariance $\Gamma_t = t B +(1-t) C>0$;
moreover, whenever $t\in [0,1)$, $F_t$ has a strictly positive and
infinitely differentiable density, that we shall denote by $f_t$ (see
e.g. \cite[Lemma 3.1]{johnson2001entropy} for more details). For every
$t\in [0,1)$, we define the {\it score} of $F_t$ as the $\R^d$-valued
function given by
\begin{equation}\label{e:scoret1}
\rho_{t} : \R^d \to \R^d : {\bf x} \mapsto \rho_{t} ({\bf x}) =
(\rho_{t,1}({\bf x}),...,\rho_{t,d}({\bf x}))^T := \nabla\log f_t({\bf
  x}),
\end{equation}
with $\nabla$ the usual gradient in $\R^d$ (note that we will
systematically regard the elements of $\R^d$ as column vectors). The
quantity $\rho_{t} ({\bf x})$ is of course well-defined for every
${\bf x}\in \R^d$ and every $t\in[0,1)$; moreover, it is easily seen
that the random vector $\rho_t(F_t)$ is completely characterized (up
to sets of $P$-measure zero) by the relation
\begin{equation}\label{e:scoret2}
E[\rho_t(F_t) g(F_t)] = -E[\nabla g(F_t)],
\end{equation}
holding for every smooth function $g : \R^d \to \R$. Selecting $g=1$
in (\ref{e:scoret2}), one sees that $\rho_t(F_t)$ is a
centered random vector. The covariance matrix of $\rho_t(F_t)$ is
denoted by
\begin{equation}
\label{eq:7}
J(F_t) :=  E[\rho_t(F_t)\rho_t(F_t)^T]
\end{equation}
(with  components $J(F_t)_{ij}=E \left[ \rho_{t,i}(F_t)\rho_{t,j}(F_t) \right]$ for $1 \le
i, j \le d$), and is customarily called the {\it Fisher information
  matrix} of $F_t$. Focussing on the case $t=0$, one sees immediately
that the Gaussian vector $F_0 = Z \sim \mathscr{N}_d(0, C)$
has linear score function $\rho_0({\bf x}) = \rho_{Z}({\bf x}) = -C^{-1}{\bf x}$ and Fisher
information  $J(F_0) = J(Z) = C^{-1}$.

\begin{rem}{ Fix $t\in [0,1) $. Using formula (\ref{e:scoret2}) one
    deduces that a version of $\rho_t(F_t)$ is given by the
    conditional expectation $-(1-t)^{-1/2}E[C^{-1}Z | F_t]$, from
    which we infer that the matrix $J(F_t)$ is well-defined and its
    entries are all finite.}
\end{rem}

\smallskip

 For $t\in [0,1)$, we define the
{\it standardized Fisher information matrix} of $F_t$ as
\begin{equation}
  \label{eq:8}
  J_{st} (F_t) := \Gamma_t E \left[ \left( \rho_t(F_t) +
      \Gamma_t^{-1}F_t \right)\left( \rho_{t}(F_t) + \Gamma_t^{-1}F_t
    \right)^T 
\right] =  \Gamma_tJ(F_t) - I_d,
\end{equation}
where $I_d$ is the $d\times d$ identity matrix, and the last equality
holds because $E \left[ \rho_t(F_t)F_t \right] = -I_d$. Note that the 
positive semidefinite matrix $\Gamma_t^{-1} J_{st} (F_t) = J(F_t) -
\Gamma_t^{-1}$ is   the difference between the Fisher
information matrix of $F_t$ and that of a Gaussian vector having
distribution $\mathscr{N}_d(0,\Gamma_t)$. Observe that
\begin{equation}\label{e:equiv}
J_{st} (F_t) := E \left[ \left( \rho^{\star}_t(F_t) + F_t
  \right)\left( \rho^{\star}_{t}(F_t) + F_t
  \right)^T\right]\Gamma_t^{-1}, 
\end{equation}
where the vector
\begin{equation}\label{e:rhostar}
\rho^{\star}_t(F_t) = (\rho^{\star}_{t,1}(F_t),...,\rho^{\star}_{t,d}(F_t))^T := \Gamma_t \rho_t(F_t)
\end{equation}
 is completely characterized (up to sets of $P$-measure 0) by the equation
\begin{equation}\label{e:scoret3}
E[\rho^{\star}_t(F_t) g(F_t)] = -\Gamma_tE[\nabla g(F_t)],
\end{equation}
holding for every smooth test function $g$.

\begin{rem} \label{rem:fshfih} Of course the above information theoretic quantities are
  not defined only  for Gaussian
  mixtures of the form $F_t$ but more generally for any random vector
  satisfying the relevant assumptions (which are
  necessarily verified by the $F_t$). In particular, if $F$ has 
  covariance matrix $B$ and differentiable density $f$ then,  letting
  $\rho_F(\mathbf{x}) := \nabla \log f ({\mathbf x})$  be the score function for
  $F$,  the standardized Fisher information of $F$ is 
  \begin{equation}
    \label{eq:4}
    J_{st}(F) = B E \left[ \rho_F(F) \rho_F(F)^T \right].
  \end{equation}
In the event that the above be well-defined then  it is also 
scale invariant in the sense that $J_{st}(\alpha F) = J_{st}(F)$ for
all $\alpha \in \R$. 
\end{rem}

The following fundamental result is known as the (multidimensional)
{\it de Bruijn's identity}: it shows that the relative entropy
$D(F||Z)$ can be represented in terms of the integral of the mapping
$t \mapsto {\rm tr}(C\Gamma_t^{-1}J_{st}(F_t))$ with respect to the
measure $dt/2t$ on 
$(0,1]$. 
It is one of the staples of the entire
paper. We refer the reader e.g. to \cite{logsob_book, BA86} for proofs
in the case $d=1$. Our multidimensional statement is a rescaling of
\cite[Theorem 2.3]{johnson2001entropy} (some more details are given in
the proof). See also \cite{carlen1991entropy}.

\begin{lemma}[Multivariate de Bruijn's identity]\label{l:db} Let  the above notation and assumptions prevail. Then,
  \begin{align}
    \label{eq:debruijnD}
    D(F||Z) &= \int_0^1 \frac{1}{2t} {\rm tr} \left( C \Gamma_t^{-1}
 J_{st}(F_{t}) \right) dt  \\
 &\quad + \frac{1}{2} \left(  {\rm tr} \left( C^{-1}B \right)-d \right)
+ \int_0^1 \frac{1}{2t} {\rm tr} \left( C \Gamma_t^{-1}-I_d
\right) dt. \notag
\end{align}
 \end{lemma}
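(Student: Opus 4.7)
The plan is to extract \eqref{eq:debruijnD} from the entropy decomposition \eqref{eq:14} by writing $\mathrm{Ent}(F)-\mathrm{Ent}(Z)$ as the integral of $(d/dt)\mathrm{Ent}(F_t)$ over $t\in(0,1)$, computing this derivative via the classical (additive-noise) de Bruijn identity, and then rearranging algebraically to uncover the standardized Fisher matrix $J_{st}(F_t)$. The result is in effect a rescaled version of Theorem~2.3 of \cite{johnson2001entropy}.

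For the derivative step, I would introduce the auxiliary interpolation $Y_s:=F+\sqrt{s}\,Z$, $s\geq 0$, and observe that via the reparametrization $s=(1-t)/t$ one has $F_t=\sqrt{t}\,Y_s$. Since $Y_s$ is a Gaussian convolution for every $s>0$, its density is $C^{\infty}$ and satisfies $\partial_s f_{Y_s}=\tfrac{1}{2}\nabla\cdot(C\nabla f_{Y_s})$. Differentiating $\mathrm{Ent}(Y_s)=-\int f_{Y_s}\log f_{Y_s}$ in $s$ and integrating by parts yields the classical additive de Bruijn identity $\tfrac{d}{ds}\mathrm{Ent}(Y_s)=\tfrac{1}{2}\mathrm{tr}(C\,J(Y_s))$. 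The dilation rules $\mathrm{Ent}(cX)=\mathrm{Ent}(X)+d\log c$ and $\rho_{cX}(y)=c^{-1}\rho_X(y/c)$, both immediate from $f_{cX}(y)=c^{-d}f_X(y/c)$, then give $\mathrm{Ent}(F_t)=\mathrm{Ent}(Y_s)+\tfrac{d}{2}\log t$ and $J(Y_s)=t\,J(F_t)$; combining these with $ds/dt=-1/t^2$ produces
\[
\frac{d}{dt}\mathrm{Ent}(F_t)=-\frac{1}{2t}\,\mathrm{tr}\bigl(CJ(F_t)-I_d\bigr),\qquad t\in(0,1).
\]

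The third step is purely algebraic: multiplying $J_{st}(F_t)=\Gamma_t J(F_t)-I_d$ on the left by $C\Gamma_t^{-1}$ gives the decomposition
\[
CJ(F_t)-I_d=C\Gamma_t^{-1}J_{st}(F_t)+\bigl(C\Gamma_t^{-1}-I_d\bigr).
\]
Integrating the previous display on $(0,1)$ and substituting into \eqref{eq:14} yields \eqref{eq:debruijnD}.

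The main obstacle will be justifying integrability and the interchange of $d/dt$ with expectation at the endpoints. Near $t=0$ the apparent singularity $1/(2t)$ is harmless, since $\Gamma_0=C$ and $J(Z)=C^{-1}$ force $\mathrm{tr}(CJ(F_t)-I_d)$ and $\mathrm{tr}(C\Gamma_t^{-1}-I_d)$ to vanish linearly in $t$, which is routine to check by perturbing the Gaussian convolution. The delicate endpoint is $t\to 1$, where $J(F_t)$ may diverge if $F$ lacks a smooth density. I would handle this by first establishing the identity on every $[\varepsilon,1-\varepsilon]$, where differentiation under the integral for the smooth family $Y_s$ is standard, and then pass to the limit using lower semi-continuity of $\mathrm{Ent}$, monotone convergence (both sides being allowed to equal $+\infty$ simultaneously), and the representation in \eqref{eq:8} showing that $\mathrm{tr}(C\Gamma_t^{-1}J_{st}(F_t))\geq 0$. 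Equivalently, the conclusion can be imported directly from \cite[Theorem~2.3]{johnson2001entropy} up to the rescaling outlined above.
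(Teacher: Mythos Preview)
Your proposal is correct and is essentially the paper's argument: the reparametrization $s=(1-t)/t$ is precisely the paper's change of variables $t=(1+\tau)^{-1}$, and your additive-noise interpolation $Y_s=F+\sqrt{s}\,Z$ together with the scale invariance $J_{st}(Y_s)=J_{st}(F_t)$ is exactly what the paper invokes when reducing to \cite[Theorem~2.3]{johnson2001entropy}. The only difference is that you spell out the de Bruijn computation and the endpoint analysis that the paper outsources to the citation; your final remark that one can ``import directly'' the cited theorem is in fact the entirety of the paper's proof.
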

\begin{proof} {}In \cite[Theorem
2.3]{johnson2001entropy} it is proved that
\begin{align*}
  D(F||Z) &= \int_0^\infty \frac{1}{2} {\rm tr} \left( C (B+\tau C)^{-1} J_{st}(F+\sqrt{\tau}Z)) \right) d\tau
  \\
 &\quad + \frac{1}{2} \left(  {\rm tr} \left( C^{-1}B \right)-d \right)
+ \frac{1}{2}\int_0^\infty  {\rm tr} \left( C\left( (B+\tau C)^{-1} -\frac{C^{-1}}{1+\tau}
\right) \right) d\tau
\end{align*}
(note that the definition of standardized Fisher information used in
\cite{johnson2001entropy} is different from ours). The conclusion is
obtained by using the change of variables $t = (1+\tau)^{-1}$, as well
as the fact that 
\begin{align*}
  J_{st}\left(F+\sqrt{\frac{1-t}{t}} Z \right) = J_{st}\left( \sqrt{t} F+\sqrt{1-t} Z \right),
\end{align*}
which follows from the scale-invariance of standardized Fisher
information mentionned in Remark \ref{rem:fshfih}.
\end{proof}

\begin{rem}{ Assume that $C_n$, $n\geq 1$, is a sequence of $d\times d$ nonsingular covariance matrices such that $C_{n;i,j} \to B_{i,j} $ for every $i,j = 1,...,d$, as $n\to \infty$. Then, the second and third summands of
\eqref{eq:debruijnD} (with $C_n$ replacing $C$) converge to 0 as $n\to\infty$.

}
\end{rem}

For future reference, we will now rewrite formula (\ref{eq:debruijnD}) for some specific choices of $d$, $F$, $B$ and $C$.

\begin{example}{
\begin{itemize}
\item[(i)] Assume $F\sim \mathscr{N}_d(0,B)$. Then, $J_{st}(F_t) = 0$ (null matrix) for every $t\in [0,1)$, and formula (\ref{eq:debruijnD}) becomes
 \begin{equation}
    \label{eq:debruijnG}
    D(F||Z) = \frac{1}{2} \left(  {\rm tr} \left( C^{-1}B \right)-d \right)
+ \int_0^1 \frac{1}{2t} {\rm tr} \left( C \Gamma_t^{-1}-I_d
\right) dt.
\end{equation}
\item[(ii)]
Assume that $d=1$ and that $F$ and $Z$ have variances $b,c>0$, respectively. Defining $\gamma_t = tb + (1-t)c$, relation (\ref{eq:debruijnD}) becomes
\begin{align}
\label{eq:debruijn}
  D(F || Z) &= \int_0^1 \frac{c}{2 t\gamma_t} J_{st}(F_t)dt +\frac12 \left(\frac{b}{c} -1\right)+ \int_0^1 \frac{1}{2t} \left( \frac{c}{\gamma_t}-1 \right)dt\\
  & = \int_0^1 \frac{c}{2 t }E[(\rho_t(F_t) +\gamma_t^{-1}F_t)^2] dt  +\frac12 \left(\frac{b}{c} -1\right)+ \frac{\log c-\log b}{2}.\notag
\end{align}
Relation (\ref{eq:debruijn}) in the case $b=c$ $(= \gamma_t)$
corresponds to the integral formula \eqref{key0} proved by Barron in \cite[Lemma 1]{BA86}.
\item[(iii)] If $B=C$, then (\ref{eq:debruijnD}) takes the form
\begin{align}
    \label{eq:debruijnE}
D(F||Z) = \int_0^1 \frac{1}{2t} {\rm tr} \left(J_{st}(F_{t}) \right) dt.
\end{align}
In the special case where $B=C=I_d$, one has that
\begin{equation}
  \label{eq:30}
D(F || Z)  = \frac{1}{2} \sum_{j=1}^d \int_0^1 \frac{1}{t} E \left[ \left(
      \rho_{t,j}(F_t)+F_{t, j} \right)^2 \right]dt,
\end{equation}
of which \eqref{key0} is a particular case ($d=1$).
\end{itemize}

}
\end{example}

In the univariate setting, the general variance case ($E \left[ F^2
\right] = \sigma^2$) follows
trivially from the standardized one ($E \left[ F^2
\right] = 1$) through scaling; the same cannot be said in the
multivariate setting since the appealing form \eqref{eq:30} 
cannot be directly achieved for $d \ge 2$ when the
covariance matrices are not the identity because here  the dependence
structure of $F$ needs to be taken into account. In Lemma
\ref{lem:gencov} we provide an estimate
allowing one to deal with this difficulty in the case $B = C$, for
every $d$. The proof is based on the following elementary fact: if
$A,B$ are two $d\times d$ symmetric matrices, and if $A$ is
semi-positive definite, then
\begin{equation}\label{e:trick}
\lambda_{\min}(B) \times  {\rm tr}(A)\leq {\rm tr}(AB) \leq  \lambda_{\max}(B) \times  {\rm tr}(A),
\end{equation}
where $\lambda_{\min}(B)$ and $\lambda_{\max}(B)$ stand, respectively,
for the maximum and minimum eigenvalue of $B$. Observe that
$\lambda_{\max}(B) = \| B \|_{op}$, the operator norm of $B$.

\begin{lemma} \label{lem:gencov}Fix $d\geq 1$, and assume that $B=C$. Then, $C \Gamma_t^{-1} = I_d$, and one has the following estimates
\begin{align}\label{e:hi}
\lambda_{\min}(C)\times \sum_{j=1}^d E[ (\rho_{t,j}(F_t)
+(C^{-1}F_t)_j)^2  ] &\leq   {\rm tr} \left(J_{st}(F_{t}) \right)\\
\notag & \leq \lambda_{\max}(C)\times \sum_{j=1}^d E[ (\rho_{t,j}(F_t)
+(C^{-1}F_t)_j)^2  ], \\
\label{e:ho}  \lambda_{\min}(C^{-1}) \times \sum_{j=1}^d E[
(\rho^{\star}_{t,j}(F_t) + F_{t,j})^2  ] &\leq {\rm tr}  \left(
  J_{st}(F_{t}) \right) \\ \notag &\leq  \lambda_{\max}(C^{-1}) \times
\sum_{j=1}^d E[ (\rho^{\star}_{t,j}(F_t) + F_{t,j})^2  ].
\end{align}
\end{lemma}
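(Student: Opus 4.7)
The plan is to observe that under the assumption $B = C$ the interpolating covariance collapses to $\Gamma_t = tC + (1-t)C = C$, so $C\Gamma_t^{-1} = I_d$ is trivial. The rest of the proof is then a direct application of the trace inequality \eqref{e:trick} to two different factorizations of the standardized Fisher information matrix $J_{st}(F_t)$.

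For the first pair of inequalities \eqref{e:hi}, I would start from the defining identity \eqref{eq:8}, which under $\Gamma_t = C$ reads
\[
J_{st}(F_t) = C \cdot M_t, \qquad M_t := E\bigl[(\rho_t(F_t) + C^{-1}F_t)(\rho_t(F_t) + C^{-1}F_t)^T\bigr].
\]
The matrix $M_t$ is a Gramian, hence symmetric and positive semi-definite. Using the cyclic property of the trace, $\mathrm{tr}(J_{st}(F_t)) = \mathrm{tr}(CM_t) = \mathrm{tr}(M_t C)$, and \eqref{e:trick} applied with $A = M_t$ and $B = C$ yields
\[
\lambda_{\min}(C)\,\mathrm{tr}(M_t) \leq \mathrm{tr}(J_{st}(F_t)) \leq \lambda_{\max}(C)\,\mathrm{tr}(M_t).
\]
Since $\mathrm{tr}(M_t) = \sum_{j=1}^d E[(\rho_{t,j}(F_t) + (C^{-1}F_t)_j)^2]$ by definition, this is exactly \eqref{e:hi}.

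For the second pair \eqref{e:ho}, I would use the alternative representation \eqref{e:equiv}, which with $\Gamma_t = C$ becomes
\[
J_{st}(F_t) = N_t \cdot C^{-1}, \qquad N_t := E\bigl[(\rho^\star_t(F_t) + F_t)(\rho^\star_t(F_t) + F_t)^T\bigr],
\]
where again $N_t$ is symmetric and positive semi-definite. By the same reasoning, $\mathrm{tr}(J_{st}(F_t)) = \mathrm{tr}(N_t C^{-1}) = \mathrm{tr}(C^{-1} N_t)$ and applying \eqref{e:trick} with $A = N_t$ and $B = C^{-1}$ produces
\[
\lambda_{\min}(C^{-1})\,\mathrm{tr}(N_t) \leq \mathrm{tr}(J_{st}(F_t)) \leq \lambda_{\max}(C^{-1})\,\mathrm{tr}(N_t),
\]
which is \eqref{e:ho} after identifying $\mathrm{tr}(N_t) = \sum_{j=1}^d E[(\rho^\star_{t,j}(F_t) + F_{t,j})^2]$.

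There is no real obstacle here: the lemma is essentially bookkeeping, confirming that when one passes from the matrix identity \eqref{eq:8} (or \eqref{e:equiv}) to a scalar comparison between traces, the only price paid is a factor controlled by the extreme eigenvalues of $C$ (resp.\ $C^{-1}$). The only point worth double-checking is that \eqref{e:trick} is being applied to a symmetric positive semi-definite $A$, which is the case for both $M_t$ and $N_t$ as they are covariance-type matrices.
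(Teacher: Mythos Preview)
Your proof is correct and essentially identical to the paper's. The paper's one-line argument writes $\mathrm{tr}(J_{st}(F_t)) = \mathrm{tr}(C^{-1}J_{st}(F_t)C)$ and applies \eqref{e:trick} with $A = C^{-1}J_{st}(F_t)$, $B = C$ for \eqref{e:hi}, then with $A = J_{st}(F_t)C$, $B = C^{-1}$ for \eqref{e:ho}; these choices of $A$ are precisely your $M_t$ and $N_t$, so the two arguments coincide once the definitions are unpacked, and your explicit verification that $M_t$ and $N_t$ are Gram matrices makes the application of \eqref{e:trick} cleaner.
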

\begin{proof} {}Write ${\rm tr} \left(J_{st}(F_{t}) \right) = {\rm tr}
  \left(C^{-1} J_{st}(F_{t}) C\right)$ and apply (\ref{e:trick}) first
  to $A =C^{-1} J_{st}(F_{t})$ and $B=C$, and then to $A =
  J_{st}(F_{t})C$ and $B=C^{-1}$.
\end{proof}

In the next section, we prove a new representation of the quantity $\rho_t(F_t) + C^{-1}F_t$ in terms of Stein matrices: this connection will provide the ideal framework in order to deal with the normal approximation of general random vectors.

\subsection{Stein matrices and a key lemma}\label{ss:key}

The centered $d$-dimensional vectors $F,Z$ are defined as in the
previous section (in particular, they are stochastically
independent). 

\begin{defi}[Stein matrices]\label{d:sm} {\rm Let $M(d,\R)$ denote the
    space of $d\times d$ real matrices. We say that the matrix-valued
    mapping 
$$
\tau_F : \R^d \to M(d, \R) : {\bf x} \mapsto \tau_F({\bf x}) = \{ \tau^{i,j}_F({\bf x}) : i,j=1,...,d \}
$$
is a {\it Stein matrix} for $F$  if $\tau_F^{i,j} (F)\in L^1$ for
every $i,j$ and   the following equality is verified for every
differentiable function $g : \R^d \to \R$ such that both sides are
well-defined: 
\begin{equation}
\label{eq:26}
E \left[ F g(F)  \right] = E \left[\tau_F(F)  \nabla g(F)  \right],
\end{equation}
or, equivalently,
\begin{equation}
\label{eq:26bis}
E \left[ F_i g(F) \right] = \sum_{j=1}^d E \left[ \tau^{i,j}_F(F) \partial_jg(F) \right], \quad i=1,...,d.
\end{equation}
The entries of the random matrix  $\tau_F(F)$ are called the  {\it Stein
  factors} of $F$.}
\end{defi}

\begin{rem}{

\begin{itemize}
\item[(i)] Selecting $g(F) = F_j$ in (\ref{eq:26}), one deduces that, if $\tau_F$ is a Stein matrix for $F$, then
  $E[\tau_F(F)] = B$. More to this point, if $F\sim \mathscr{N}_d(0, B)$, then the covariance matrix
  $B$ is itself a Stein matrix for $F$. This last relation is known as the {\it Stein's
    identity} for the multivariate Gaussian distribution.

\item[(ii)] Assume that $d=1$ and that $F$ has density $f$ and
  variance $b>0$. Then, under some standard regularity assumptions, it
  is easy to see
  that $\tau_F(x) =  b \int_x^{\infty}y f(y) dy/f(x)$ is a Stein
  factor for $F$.
\end{itemize}
}
\end{rem}

\begin{lemma}[Key Lemma] \label{l:key} Let the above notation and
  framework prevail, and assume that $\tau_F$ is a Stein matrix for
  $F$ such that $\tau_F^{i,j}(F)\in L^1(\Omega)$ for every
  $i,j=1,...,d$. Then, for every $t\in [0 ,1)$, the mapping
  \begin{equation}
    \label{eq:17}
    {\bf x}\mapsto   -\frac{t}{\sqrt{1-t}}E\Big[  \big(I_d- C^{-1}\tau_F(F) \big) C^{-1}Z
\,\, \Big| \,\, F_t = {\bf x}  \Big]- C^{-1}{\bf x}
 \end{equation}
is a version of the score $\rho_{t}$ of $F_t$. Also, the mapping
\begin{equation}
    \label{eq:django}
    {\bf x}\mapsto   -\frac{t}{\sqrt{1-t}}E\Big[  \big(\Gamma_t- \Gamma_t C^{-1}\tau_F(F) \big) C^{-1}Z
\,\, \Big| \,\, F_t = {\bf x}  \Big]- \Gamma_t C^{-1}{\bf x}
 \end{equation}
 is a version of the function $\rho^{\star}_t $ defined in formula (\ref{e:rhostar}).
\end{lemma}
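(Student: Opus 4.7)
The plan is to verify directly that the map $S(\mathbf{x})$ defined by the right-hand side of \eqref{eq:17} satisfies the characterizing identity
\begin{equation*}
E[S(F_t) g(F_t)] = -E[\nabla g(F_t)] \qquad \text{for every } g \in C^1_c,
\end{equation*}
which by the uniqueness statement following \eqref{e:scoret2} forces $S = \rho_t$ almost everywhere with respect to the law of $F_t$. The argument is a matrix/vector lift of the one-dimensional computation carried out in the proof of the Proposition giving \eqref{key1}: two separate integrations by parts are applied, one against the Stein matrix $\tau_F$ (with $F$ varying, $Z$ frozen), the other against the Gaussian score $-C^{-1}Z$ (with $Z$ varying, $F$ frozen), and the cross terms cancel.

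The key computational ingredients I would record first are the three identities
\begin{align*}
\sqrt{t}\,E[F g(F_t)] &= t\,E[\tau_F(F)\nabla g(F_t)], \\
\sqrt{1-t}\,E[Z g(F_t)] &= (1-t)\,C\,E[\nabla g(F_t)], \\
E\bigl[C^{-1}\tau_F(F)\,C^{-1}Z\,g(F_t)\bigr] &= \sqrt{1-t}\,E\bigl[C^{-1}\tau_F(F)\,\nabla g(F_t)\bigr].
\end{align*}
The first is obtained by conditioning on $Z$ and applying the Stein identity \eqref{eq:26} to the function $\mathbf{x}\mapsto g(\sqrt{t}\,\mathbf{x}+\sqrt{1-t}\,Z)$, whose gradient equals $\sqrt{t}\,\nabla g(F_t)$ on $F$. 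The second follows analogously by conditioning on $F$ and using that the Gaussian score of $Z$ is $-C^{-1}Z$ (so that \eqref{e:scoret2} reads $E[C^{-1}Z\,h(Z)]=E[\nabla h(Z)]$). The third requires slightly more care: one pulls the $\sigma(F)$-measurable factor $C^{-1}\tau_F(F)$ out of the expectation by conditioning on $F$, then applies the same Gaussian integration by parts componentwise to the remaining factor $C^{-1}Z\,g(F_t)$.

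Once these three identities are in hand, the verification is bookkeeping. Writing $S(\mathbf{x})$ out and splitting off the $-C^{-1}\mathbf{x}$ piece, the decomposition $F_t = \sqrt{t}F+\sqrt{1-t}Z$ combined with the first two identities gives
\begin{equation*}
-E[C^{-1} F_t\, g(F_t)] = -t\,E[C^{-1}\tau_F(F)\nabla g(F_t)] - (1-t)\,E[\nabla g(F_t)].
\end{equation*}
Meanwhile, the conditional expectation piece of $S$, evaluated against $g(F_t)$, equals
\begin{equation*}
-\frac{t}{\sqrt{1-t}}\Bigl\{E[C^{-1}Z\,g(F_t)] - E[C^{-1}\tau_F(F)\,C^{-1}Z\,g(F_t)]\Bigr\},
\end{equation*}
which by the second and third identities simplifies to $-t\,E[\nabla g(F_t)] + t\,E[C^{-1}\tau_F(F)\nabla g(F_t)]$. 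Adding the two contributions, the matrix terms cancel and the remainder is precisely $-E[\nabla g(F_t)]$, which is what was required. The claim about \eqref{eq:django} is then immediate from the definition $\rho^{\star}_t = \Gamma_t\rho_t$ in \eqref{e:rhostar}: multiplying the expression for $S$ by $\Gamma_t$ and pulling $\Gamma_t$ inside the conditional expectation yields \eqref{eq:django}.

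The main obstacle I anticipate is purely one of rigor rather than of substance: one must ensure that all the integration-by-parts manipulations are legitimate under the sole assumption $\tau_F^{i,j}(F)\in L^1$. Restricting to test functions $g\in C^1_c$ bypasses any growth issue, both \eqref{eq:26} and the Gaussian formula holding unambiguously in that class; and since smooth compactly supported functions suffice to determine $\rho_t$ uniquely on the (open) support of the smooth positive density $f_t$, no loss of generality is incurred.
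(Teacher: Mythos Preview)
Your proposal is correct and follows essentially the same approach as the paper: both verify the characterizing identity $E[S(F_t)g(F_t)]=-E[\nabla g(F_t)]$ by combining the Stein-matrix integration by parts in the $F$-variable with the Gaussian score identity in the $Z$-variable, and the cross terms cancel exactly as you describe. The only difference is organizational---you isolate the three integration-by-parts identities beforehand, whereas the paper performs them inline in a single chain of equalities---but the mathematical content is identical.
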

\begin{proof}{} Remember that $-C^{-1} Z$ is the score of $Z$, and denote by ${\bf x}\mapsto A_t({\bf x})$ the mapping defined in (\ref{eq:17}). Removing the conditional expectation and exploiting the independence of $F$ and $Z$, we infer that, for every smooth test function $g$,
\begin{align*}
  E \left[ A_t(F_t) g(F_t) \right]  & =  \frac{t}{\sqrt{1-t}}
  E\Big[  \big(I_d- C^{-1}\tau_F(F) \big)\left( -C^{-1}Z \right)
  g(F_t) \Big] \\
&  \quad\quad-     \sqrt t C^{-1} E \left[ F g(F_t) \right] - \sqrt{1-t} E
\left[ C^{-1}Z g(F_t) \right]\\
 & = - t  E \left[ \big(I_d- C^{-1} \tau_F(F) \big)  \nabla g(F_t)
\right] \\
&\quad\quad  -  t  C^{-1}  E \left[ \tau_F(F) \nabla g(F_t) \right] -
(1-t) E \left[ \nabla g(F_t) \right]\\
& = - E \left[ \nabla g(F_t) \right],
\end{align*}
thus yielding the desired conclusion.
\end{proof}

To simplify the forthcoming discussion, we shall use the shorthand
notation: $\widetilde{Z} =(\widetilde{Z}_1,...,\widetilde{Z}_d) :=
C^{-1}Z \sim \mathscr{N}_d(0, C^{-1})$, $\widetilde{F}
=(\widetilde{F}_1,...,\widetilde{F}_d) := C^{-1}F$, and
$\widetilde{\tau}_F =\{\widetilde{\tau}^{i,j}_F : i,j=1,...,d\} :=
C^{-1}\tau_F$. The following statement is the main achievement of the
section, and is obtained by combining Lemma \ref{l:key} with formulae
(\ref{eq:debruijnE}) and (\ref{e:hi})--(\ref{e:ho}), in the case where
$C=B$.

\begin{thm}\label{t:eulero} Let the above notation and assumptions
  prevail, assume that $B=C$, and introduce the notation
\begin{align}
A_1(F;Z) &:= \frac12 \int_{0}^1 \frac{t}{1-t} E\left[\sum_{j=1}^d
  E\left[\sum_{k=1}^d ({\bf 1}_{j=k} - \widetilde{\tau}_F^{j,k}(F))
    \widetilde{Z}_k \,\, \Big|  \,\, F_t
  \right]^2\right]dt,  \label{e:sa}\\
A_2(F;Z) &:= \frac12 \int_{0}^1 \frac{t}{1-t} E\left[ \sum_{j=1}^d
  E\left[\sum_{k=1}^d (C(j,k) - {\tau}_F^{j,k}(F)) \widetilde{Z}_k
    \,\, \Big|  \,\, F_t\right]^2\right]dt. \label{e:vall}
\end{align}
Then one has the inequalities
\begin{align}
&\lambda_{\min}(C) \times A_1(F;Z) \leq D(F\| Z) \leq \lambda_{\max}(C)
\times A_1(F;Z), \label{e:euler1}\\
&\lambda_{\min}(C^{-1}) \times A_2(F;Z) \leq D(F\| Z) \leq
\lambda_{\max}(C^{-1}) \times A_2(F;Z)\label{euler2}.
\end{align}
In particular, when $C=B=I_d$,
\begin{equation}\label{e:euler3}
  D(F\| Z) =  \frac12 \int_{0}^1 \frac{t}{1-t} E\left[ \sum_{j=1}^d
    E\left[\sum_{k=1}^d ({\bf 1}_{j=k} -{\tau}_F^{j,k}(F)) {Z}_k \,\,
      \Big|  \,\, F_t  \right]^2\right]dt.
\end{equation}
\end{thm}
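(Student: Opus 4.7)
The proof should follow directly by combining three ingredients that are already available in the excerpt: the de Bruijn identity in the form \eqref{eq:debruijnE} (which applies precisely because $B=C$), the trace sandwich \eqref{e:hi}--\eqref{e:ho} from Lemma \ref{lem:gencov}, and the Stein-matrix representations of the score and of its rescaled version given in Lemma \ref{l:key}.

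The plan for \eqref{e:euler1} is as follows. Since $B=C$, one has $\Gamma_t = tB+(1-t)C = C$ for every $t\in[0,1)$, and formula \eqref{eq:17} reduces to
\[
\rho_t(F_t)+C^{-1}F_t = -\frac{t}{\sqrt{1-t}}\,E\bigl[(I_d-C^{-1}\tau_F(F))\,C^{-1}Z\,\big|\,F_t\bigr].
\]
Reading off the $j$th coordinate (the conditional expectation being taken componentwise), squaring, taking a further expectation, and summing over $j=1,\dots,d$, I obtain
\[
\sum_{j=1}^d E\bigl[(\rho_{t,j}(F_t)+(C^{-1}F_t)_j)^2\bigr] = \frac{t^2}{1-t}\,E\Bigl[\sum_{j=1}^d E\Bigl[\sum_{k=1}^d({\bf 1}_{j=k}-\widetilde{\tau}_F^{j,k}(F))\widetilde{Z}_k\,\Big|\,F_t\Bigr]^2\Bigr].
\]
Combining this identity with the left inequality of \eqref{e:hi}, multiplying by $1/(2t)$, integrating over $t\in(0,1)$, and inserting the result into \eqref{eq:debruijnE} yields the lower bound of \eqref{e:euler1}; the upper bound is obtained identically, starting from the right inequality of \eqref{e:hi}. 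Note that the weight $\frac{1}{2t}\cdot\frac{t^2}{1-t}=\frac{t}{2(1-t)}$ produced by the computation is exactly the weight appearing in the definition of $A_1(F;Z)$.

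Inequality \eqref{euler2} is obtained by the identical strategy, using \eqref{eq:django} instead of \eqref{eq:17}. Because $\Gamma_t=C$ one has $\Gamma_tC^{-1}=I_d$, so the representation of $\rho^\star_t$ collapses to
\[
\rho^\star_t(F_t)+F_t = -\frac{t}{\sqrt{1-t}}\,E\bigl[(C-\tau_F(F))\,\widetilde{Z}\,\big|\,F_t\bigr].
\]
Squaring componentwise, summing over $j$, and applying \eqref{e:ho} together with \eqref{eq:debruijnE} produces \eqref{euler2} with the weight matching the definition of $A_2(F;Z)$. Finally, the identity \eqref{e:euler3} is immediate from either \eqref{e:euler1} or \eqref{euler2} in the special case $C=B=I_d$: there $\lambda_{\min}(I_d)=\lambda_{\max}(I_d)=1$, so the two sandwich inequalities collapse into equalities, while $\widetilde{Z}=Z$ and $\widetilde{\tau}_F=\tau_F$.

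I do not anticipate any serious obstacle, since all the nontrivial analytical work has already been carried out in Lemmas \ref{l:key} and \ref{lem:gencov} and in the preliminary form \eqref{eq:debruijnE} of de Bruijn's identity. The only mildly delicate bookkeeping will be to make sure that the vector-valued conditional expectation in \eqref{eq:17}--\eqref{eq:django} is read coordinatewise \emph{before} the scalar square-and-sum operation is performed, so that the scalar quantity inside the square in \eqref{e:sa} and \eqref{e:vall} really equals the $j$th component of $E[(I_d-C^{-1}\tau_F(F))C^{-1}Z\,|\,F_t]$ (respectively of $E[(C-\tau_F(F))\widetilde{Z}\,|\,F_t]$).
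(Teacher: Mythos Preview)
Your proposal is correct and follows exactly the approach the paper indicates: the theorem is, in the authors' own words, ``obtained by combining Lemma \ref{l:key} with formulae (\ref{eq:debruijnE}) and (\ref{e:hi})--(\ref{e:ho}), in the case where $C=B$,'' and you have faithfully fleshed out that combination, including the key simplification $\Gamma_t=C$ and the resulting weight $\frac{t}{2(1-t)}$.
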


The next subsection focusses on general bounds based on the estimates \eqref{e:sa}--\eqref{e:euler3}.

\subsection{A general bound}

The following statement provides the announced multidimensional generalisation of Theorem \ref{main1d}. In particular, the main estimate (\ref{wished}) provides an explicit quantitative counterpart to the heuristic fact that, if there exists a Stein matrix $\tau_F$ such that $\|\tau_F-C\|_{H.S.}$ is small (with $\|\cdot\|_{H.S.}$ denoting the usual Hilbert-Schmidt norm), then the distribution of $F$ and $Z\sim \mathscr{N}_d(0,C)$ must be close. By virtue of Theorem \ref{t:eulero}, the proximity of the two distributions is expressed in terms of the relative entropy $D(F\| Z)$.

\begin{thm}\label{main-multi}
Let $F$ be a centered and square integrable random vector
with density $f:\R^d\to [0,\infty)$, let $C>0$ be its covariance matrix, and assume that $\tau_F$ is a Stein matrix for $F$.
Let $Z\sim \mathscr{N}_d(0,C)$ be a Gaussian random vector
independent of $F$.
If, for some $\kappa,\eta>0$ and $\alpha\in(0,\frac12\big]$, one has
\begin{equation}
E\big[|\tau_F^{j,k}(F)|^{\eta+2}\big]<\infty,\quad j,k=1,\ldots,d,\label{needed-multi}
\end{equation}
as well as
\begin{equation}
{\bf TV}\Big(\sqrt t F + \sqrt{1-t}\,{\bf x},F\Big)\label{ineq-ivan2-multi}
\leq \kappa(1+\|{\bf x}\|_1)(1-t)^{\alpha},\quad {\bf x}\in\R^d,\,t\in\left[1/2,1\right],
\end{equation}
then, provided
\begin{equation}\label{lessthanone}
\Delta:= E[\| C- \tau_F\|_{H.S.}^2] = \sum_{j,k=1}^d E\left[\left(C(j,k)-\tau_F^{j,k}(F)\right)^2\right]\leq 2^{-\frac{\eta+1}{\alpha\eta}},
\end{equation}
one has
\begin{align}
D(F\|Z)&\leq
\frac{d(\eta+1)\lambda_{\max}(C^{-1})}{2\alpha\eta}\max_{1\leq l\leq
  d}E[(\widetilde{Z}_l)^2]\times \Delta\,|\log \Delta| \nonumber \\
& \quad +
\frac{C_{d,\eta,\tau}(\eta+1)\lambda_{\max}(C^{-1})}{2\alpha\eta}\Delta,
\label{wished}
\end{align}
where
\begin{align}\notag
C_{d,\eta,\tau}&:= 2d^2\left(2\kappa\,E[\|Z\|_1(1+\|Z\|_1)]+ \sqrt{\max_j C(j,j)}\right)\max_{1\leq l\leq d}E[|\widetilde{Z}_l]^{\eta+2}]^{\frac{1}{\eta+1}}\\
&\hskip2cm\times
\sum_{j,k=1}^d
\left(|C(j,k)|^{\eta+2}+E\big[|\tau_F^{j,k}(F)|^{\eta+2}\big]\right)^{\frac{1}{\eta+1}},
\label{cdeta}
\end{align}
and (as above) $\widetilde{Z} = C^{-1} Z$.
\end{thm}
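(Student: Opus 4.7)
The plan is to mirror the one-dimensional argument of Theorem \ref{main1d}, with Theorem \ref{t:eulero}'s representation \eqref{euler2} playing the role of \eqref{key1}. Writing $Y_j := \sum_{k=1}^d (C(j,k) - \tau_F^{j,k}(F)) \widetilde{Z}_k$ for each $j=1,\ldots,d$, the starting point is
\[
D(F\|Z) \leq \lambda_{\max}(C^{-1})\,A_2(F;Z) = \frac{\lambda_{\max}(C^{-1})}{2}\sum_{j=1}^d \int_0^1 \frac{t}{1-t}\,E\bigl[E[Y_j\,|\,F_t]^2\bigr]\,dt,
\]
so the task reduces to controlling each of these $d$ integrals.

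The key identity, which is the multidimensional counterpart of the opening computation in Lemma \ref{sec:lemgui}, is that for every $g\in C^1_c$,
\[
E[Y_j g(F_t)] = E\bigl[Z_j(g(F_t)-g(F))\bigr] - \sqrt{(1-t)/t}\, E[F_j g(F_t)].
\]
To prove it, I would first observe that Gaussian integration by parts conditional on $F$ gives $E[\widetilde{Z}_k g(F_t)] = \sqrt{1-t}\,E[\partial_k g(F_t)]$ (the cross-term $C^{-1}C = I_d$ from $Z\sim\mathscr{N}_d(0,C)$ makes $\widetilde Z$ the right choice), and similarly $E[\tau_F^{j,k}(F)\widetilde Z_k g(F_t)] = \sqrt{1-t}\,E[\tau_F^{j,k}(F)\partial_k g(F_t)]$; then $\sum_k C(j,k)\partial_k g$ reassembles into $E[Z_j g(F_t)]/\sqrt{1-t}$ (and $E[Z_j g(F)]=0$ by independence and centering), while the Stein matrix identity \eqref{eq:26bis}, applied conditionally on $Z$, collapses $\sum_k E[\tau_F^{j,k}(F)\partial_k g(F_t)]$ into $E[F_j g(F_t)]/\sqrt{t}$. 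For $t\in[1/2,1]$ and $\|g\|_\infty\leq 1$, the TV hypothesis \eqref{ineq-ivan2-multi} bounds $|E[Z_j(g(F_t)-g(F))]|\leq 2\kappa(1-t)^\alpha E[\|Z\|_1(1+\|Z\|_1)]$ (just as in the proof of Lemma \ref{sec:lemgui}, integrating against the density of $Z$), while Cauchy-Schwarz combined with $\sqrt{1-t}\leq (1-t)^\alpha$ (using $\alpha\leq 1/2$ and $1-t\leq 1$) yields $\sqrt{(1-t)/t}\,|E[F_jg(F_t)]|\leq \sqrt{2\max_j C(j,j)}\,(1-t)^\alpha$. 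Lemma \ref{lem:cooldual} then supplies a uniform bound $E[|E[Y_j|F_t]|]\leq L(1-t)^\alpha$ on $[1/2,1]$, for an explicit constant $L$ independent of $j$.

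The rest is a multidimensional transcription of the interpolation trick \eqref{eq:6}--\eqref{keykey}. Split $\int_0^1 = \int_0^{1-\epsilon}+\int_{1-\epsilon}^1$ with $\epsilon\leq 1/2$: on $[0,1-\epsilon]$, apply the Jensen bound $E[E[Y_j|F_t]^2]\leq E[Y_j^2]$ together with the Cauchy-Schwarz estimate
\[
E[Y_j^2] \leq d\sum_k E[\widetilde Z_k^2]\,E[(C(j,k)-\tau_F^{j,k}(F))^2],
\]
which exploits the independence of $\widetilde Z$ from $F$; summed over $j$ and multiplied by $\int_0^{1-\epsilon}\tfrac{t\,dt}{1-t}\leq |\log\epsilon|$, this contributes at most $d\max_l E[\widetilde Z_l^2]\,\Delta\,|\log\epsilon|$. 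On $[1-\epsilon,1]$, the H\"older interpolation
\[
E[E[Y_j|F_t]^2]\leq E[|E[Y_j|F_t]|]^{\eta/(\eta+1)}\,E[|Y_j|^{\eta+2}]^{1/(\eta+1)}
\]
combines the uniform bound of the previous paragraph with Jensen's inequality for the outer conditional expectation and the power-mean estimate $|Y_j|^{\eta+2}\leq d^{\eta+1}\sum_k|\widetilde Z_k|^{\eta+2}|C(j,k)-\tau_F^{j,k}(F)|^{\eta+2}$ (finiteness being guaranteed by \eqref{needed-multi}), yielding a term of order $\epsilon^{\alpha\eta/(\eta+1)}$ after the elementary integration $\int_{1-\epsilon}^1(1-t)^{\alpha\eta/(\eta+1)-1}dt = \tfrac{\eta+1}{\alpha\eta}\epsilon^{\alpha\eta/(\eta+1)}$. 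Finally, taking $\epsilon = \Delta^{(\eta+1)/(\alpha\eta)}$, which belongs to $(0,1/2]$ precisely under the standing hypothesis \eqref{lessthanone}, balances the two contributions and produces \eqref{wished}.

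The main obstacle is the derivation of the key identity: the Stein matrix identity \eqref{eq:26bis} couples the components of $\nabla g$ through the whole row $(\tau_F^{j,1},\ldots,\tau_F^{j,d})$, so cancellation of derivatives requires this row to be matched with the Gaussian IBP sum $\sum_k C(j,k)\partial_k g(F_t)$, which is precisely the sum produced by testing against $\widetilde Z = C^{-1}Z$ rather than $Z$; this explains why $A_2$ is stated in terms of $\widetilde Z$ in Theorem \ref{t:eulero}. Beyond this, the proof is book-keeping: carefully chaining Cauchy-Schwarz, Jensen, the power-mean inequality, and the moment estimates from \eqref{needed-multi} and \eqref{ineq-ivan2-multi} produces the explicit constant \eqref{cdeta}.
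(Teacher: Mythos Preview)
Your proposal is correct and follows essentially the same route as the paper: derive the identity $E[Y_j g(F_t)] = E[Z_j(g(F_t)-g(F))] - \sqrt{(1-t)/t}\,E[F_j g(F_t)]$ via Gaussian integration by parts and the Stein-matrix relation \eqref{eq:26bis}, feed it into Lemma~\ref{lem:cooldual} together with the TV hypothesis, then split the $A_2$-integral at $1-\epsilon$, apply the H\"older/convexity interpolation on $[1-\epsilon,1]$, and optimise $\epsilon = \Delta^{(\eta+1)/(\alpha\eta)}$. Your explanation of why $\widetilde Z = C^{-1}Z$ is the right test vector is in fact more explicit than the paper's, which simply invokes independence and \eqref{eq:26bis} without further comment.
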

\begin{proof}

Take $g\in  C^1_c$ such that
$\|g\|_{\infty}\leq 1$. Then, by
independence of $\widetilde{Z}$ and $F$
and using (\ref{eq:26bis}), one has, for any $j=1,\ldots,d$,
\begin{align*}
 &  E \left[  \sum_{k=1}^d (C(j,k)-\tau_F^{j,k}(F))\widetilde{Z}_k\,g(F_t)  \right] \nonumber\\
& = \sum_{k,l=1}^d C(j,k)C^{-1}(k,l)\,E \left[Z_l\, g(F_t) \right]- \sum_{k,l=1}^d  C^{-1}(k,l) E \left[  \tau_F^{j,k}(F) Z_l \,g(F_t)\right]
\nonumber \\
& = E \left[Z_j\, g(F_t) \right]-
\sqrt{1-t} \sum_{k,l,m=1}^d   C^{-1}(k,l)C(l,m)  E \left[  \tau_F^{j,k}(F) \partial_m g(F_t)\right]
\nonumber \\
& = E \left[Z_j\, (g(F_t)-g(F)) \right]-
\sqrt{1-t} \sum_{k=1}^d    E \left[  \tau_F^{j,k}(F) \partial_k g(F_t)\right]
\nonumber \\
& = E \left[Z_j\, (g(F_t)-g(F)) \right]-
\sqrt{\frac{1-t}{t}} E \left[  F_j g(F_t)\right].
\nonumber
\end{align*}
Using (\ref{ineq-ivan2-multi}), we have
\begin{align*}
& \left|E \left[Z_j(g(F_t)-g(F)) \right]\right| \\
&=\left|\int_{\R^d} x_j E[g(\sqrt t F + \sqrt{1-t}\,{\bf x})-g(F)]\,\frac{e^{-\frac12\langle C^{-1}{\bf x},{\bf x}\rangle}}{(2\pi)^{\frac{d}{2}}\sqrt{\det C}}\,d{\bf x}\right|\\
&\leq
2\int_{\R^d} |x_j| {\bf TV}(\sqrt t F + \sqrt{1-t}\,{\bf x},F)\,\frac{e^{-\frac12\langle C^{-1}{\bf x},{\bf x}\rangle}}{(2\pi)^{\frac{d}{2}}\sqrt{\det C}}\,d{\bf x}\\
&\leq
2\kappa (1-t)^{\alpha}\int_{\R^d} \|{\bf x}\|_1 (1+\|{\bf x}\|_1)\,\frac{e^{-\frac12\langle C^{-1}{\bf x},{\bf x}\rangle}}{(2\pi)^{\frac{d}{2}}\sqrt{\det C}}\,d{\bf x}\\
&=
2\kappa\,E[\|Z\|_1(1+\|Z\|_1)]\,(1-t)^{\alpha}.
\end{align*}
As a result, due to Lemma \ref{lem:cooldual} and since $E|F_j|\leq \sqrt{E[F_j^2]}\leq \sqrt{\max_j C(j,j)}$,
one obtains
\begin{align*}
&\max_{1\leq j\leq d}E\left[ \left|   E \left[ \sum_{k=1}^d (C(j,k)-\tau_F^{j,k}(F))\widetilde{Z}_k\bigg| F_t  \right] \right|\right]\\
&\leq \left(2\kappa\,E[\|Z\|_1(1+\|Z\|_1)]+ \sqrt{\max_j C(j,j)}\right)(1-t)^{\alpha}.
\end{align*}
Now, using among others the H\"older inequality
and the convexity of the function $x\mapsto |x|^{\eta+2}$, we have that
\begin{align}
&\sum_{j=1}^dE\left[
 E \left[ \sum_{k=1}^d (C(j,k)-\tau_F^{j,k}(F))\widetilde{Z}_k\bigg| F_t  \right]^2
\right]\nonumber\\
&=\sum_{j=1}^d
E\left[
\left| E \left[ \sum_{k=1}^d (C(j,k)-\tau_F^{j,k}(F))\widetilde{Z}_k\bigg| F_t  \right]\right|^{\frac{\eta}{\eta+1}}
 \right.\times\nonumber \\
&\quad\quad\quad\quad\times \left.\left| E \left[ \sum_{k=1}^d (C(j,k)-\tau_F^{j,k}(F))\widetilde{Z}_k\bigg| F_t  \right]\right|^{\frac{\eta+2}{\eta+1}}
\right]\nonumber\\
&\leq
\sum_{j=1}^d
E\left[
\left| E \left[ \sum_{k=1}^d (C(j,k)-\tau_F^{j,k}(F))\widetilde{Z}_k\bigg| F_t  \right]\right|\right]^{\frac{\eta}{\eta+1}}\times\nonumber \\
&\quad\quad\quad\quad\times E\left[\left| E \left[ \sum_{k=1}^d (C(j,k)-\tau_F^{j,k}(F))\widetilde{Z}_k\bigg| F_t  \right]\right|^{\eta+2}
\right]^{\frac{1}{\eta+1}}\notag\\
&\leq
C_{d,\eta,\tau} \,\,
(1-t)^{\frac{\alpha\eta}{\eta+1}},\label{key3multi}
\end{align}
with $C_{d,\eta,\tau}$ given by (\ref{cdeta}).
At this stage, we shall use the key identity (\ref{euler2}).
To properly control the right-hand-side of (\ref{e:vall}), we split the
integral in two parts: for every $0<\e\leq \frac12$,
\begin{align*}
&\frac{2}{\lambda_{max}(C^{-1})}\,D(F\|Z)\nonumber\\
& \leq
\sum_{j=1}^d
 E \left[ \left(\sum_{k=1}^d (C(j,k)-\tau_F^{j,k}(F))\widetilde{Z}_k\right)^2
\right]\,\int_0^{1-\e} \frac{t\,dt}{1-t}\nonumber \\
&\quad +
\int_{1-\e}^1 \frac{t}{1-t}\,\sum_{j=1}^dE\left[
 E \left[ \sum_{k=1}^d (C(j,k)-\tau_F^{j,k}(F))\widetilde{Z}_k\bigg| F_t  \right]^2
\right]
dt \nonumber \\
&\leq
d\max_{1\leq l\leq d}E[(\widetilde{Z}_l)^2]\,\sum_{j,k=1}^d
E\left[\left(C(j,k)-\tau_F^{j,k}(F)\right)^2\right]\,|\log\e|\nonumber
\\ 
& \quad+C_{d,\eta,\tau}
\int_{1-\e}^1
(1-t)^{\frac{\alpha\eta}{\eta+1}-1}
dt\nonumber\\ 
&\leq
d\max_{1\leq l\leq d}E[(\widetilde{Z}_l)^2]\,\sum_{j,k=1}^d E\left[\left(C(j,k)-\tau_F^{j,k}(F)\right)^2\right]\,|\log\e|
+C_{d,\eta,\tau}
\frac{\eta+1}{\alpha\eta}\,\e^{\frac{\alpha\eta}{\eta+1}},
\end{align*}
the second inequality being a consequence of (\ref{key3multi}) as well as $\int_0^{1-\e}
\frac{t\,dt}{1-t}=\int_\e^{1} \frac{(1-u)du}{u}\leq \int_\e^{1}
\frac{du}{u}=-\log\e$.
Since  (\ref{lessthanone}) holds true,
one can  optimize over $\e$ and choose
$\e=
\Delta^{\frac{\eta+1}{\alpha\eta}}$,
which leads to the desired estimate (\ref{wished}).
\end{proof}

\subsection{A general roadmap for proving quantitative entropic CLTs}

In Section \ref{s:gauss}, we will apply the content of Theorem \ref{main-multi} to deduce entropic fourth moment theorems on a Gaussian space. As demonstrated in the discussion to follow, this achievement is just one possible application of a general strategy, leading to effective entropic estimates by means of Theorem \ref{main-multi}. 

\medskip

\noindent{\bf Description of the strategy}. Fix $d\geq 1$, and consider a sequence $F_n = (F_{1,n},...,F_{d,n})$, $n\geq 1$, of centered random vectors with covariance $C_n>0$ and such that $F_n$ has a density for every $n$. Let $Z_n \sim \mathscr{N}_d(0, C_n)$, $n\geq 1$, be a sequence of Gaussian vectors, and assume that $C_n \to C>0$, as $n\to \infty$. Then, in order to show that $D(F_n \| Z_n) \to 0$ one has to accomplish the following steps:

\begin{itemize}

\item[(a)] {\it Write explicitly the Stein matrix $\tau_{F_n}$ for
    every $n$}. This task can be realised either by exploiting the
  explicit form of the density of $F_n$, or by applying integration by
  parts formulae, whenever they are available. This latter case often
  leads to a representation of the Stein matrix by means of a
  conditional expectation. As an explicit example and as shown in Section
  \ref{ss:mat} below, Stein matrices on a Gaussian space can be easily
  expressed in terms of Malliavin operators.

\item[(b)] {\it Check that, for some $\eta>0$, the quantity
    $E\big[|\tau_{F_n}^{j,k}(F_n)|^{\eta+2}\big]$ is bounded by a
    finite constant independent of $n$}. This step typically requires
  one to show that  the sequence $\{ \tau_{F_n}^{j,k}(F_n) : n\geq
  1\}$ is bounded in $L^2(\Omega)$, and moreover that the elements of
  this sequence verify some adequate hypercontractivity property. We
  will see in Section \ref{s:gauss} that, when considering random
  variables living inside a finite sum of Wiener chaoses, these
  properties are implied by the explicit representation of Stein
  matrices in terms of Malliavin operators, as well as by the
  fundamental inequality stated in Proposition~\ref{P : Hyper}.

\item[(c)] {\it Prove a total variation bound such as
    \eqref{ineq-ivan2-multi} for every $F_n$, with a constant $\kappa$
    independent of $n$}. This is arguably the most delicate step, as
  it consists in an explicit estimate of the total variation distance
  between the law of $F_n$ and that of $\sqrt{t}F_n + \sqrt{1-t} \,
  {\bf x}$, when $t$ is close to one. It is an interesting remark
  that, for every $i=1,...,d$ (as $t\to 1$), $E[(F_{i,n} -
  (\sqrt{t}F_{i,n} + \sqrt{1-t} \, x_i ))^2] = O(1-t)$, so that an
  estimate such as \eqref{ineq-ivan2-multi} basically requires one to
  explicitly relate the total variation and mean-square distances
  between $F_n$ and $\sqrt{t}F_n + \sqrt{1-t} \, {\bf x}$. We will see
  in Section \ref{s:gauss} that, for random variables living inside a
  fixed sum of Wiener chaoses, adequate bounds of this type can be
  deduced by applying the Carbery-Wright inequalities
  \cite{CW}, that we shall combine with the approach developed by
  Nourdin, Nualart and Poly in \cite{nourdin2012absolute}.

\item[(d)] {\it Show that $ E\| \tau_{F_n} - C_n\|^2 \to 0$}. This is realised by using the explicit representation of the Stein matrices $\tau_{F_n}$.

\end{itemize} 

Once steps (a)--(d) are accomplished, an upper bound on the speed of convergence to zero of the sequence $D(F_n \| Z_n)$ can be deduced from \eqref{wished}, whereas the total variation distance between the laws of $F_n$ and $Z$ follows from the inequality
\[
{\bf TV} (F_n,Z) \leq {\bf TV} (F_n,Z_n) + {\bf TV} (Z ,Z_n) \leq   \frac{1}{\sqrt{2}}\left[ \sqrt{D(F_n\| Z_n)} + \sqrt{D(Z\|Z_n)}\right].
\]
Note that the quantity $D(Z\|Z_n)$ can be assessed by using \eqref{eq:debruijnG}.

\medskip

{\color{red}Before proceeding to the application of the above roadmap
  on Gaussian space we now first provide, in  the forthcoming Section
  \ref{s:pregauss}, the necessary
preliminary results about Gaussian stochastic analysis.}

\section{Gaussian spaces and variational calculus}\label{s:pregauss}

As announced, we shall now focus on random variables that can be written as functionals of a countable collection of independent and identically distributed Gaussian $\mathscr{N}(0,1)$ random variables, that we shall denote by
\begin{equation}\label{e:gaussf}
{\bf G}=\Big\{G_i : i\geq 1\Big\}.
\end{equation}
Note that our description of ${\bf G}$ is equivalent to saying that ${\bf G}$ is a Gaussian sequence such that $E[G_i] = 0$ for every $i$ and $E[G_iG_j] = {\bf 1}_{\{i=j\}}$. We will write $L^2(\sigma({\bf G})):= L^2(P,\sigma({\bf G}))$ to indicate the class of square-integrable (real-valued) random variables that are measurable with respect to the $\sigma$-field generated by ${\bf G}$.

\smallskip

The reader is referred e.g. to \cite{NP11, Nu06} for any unexplained definition or result appearing in the subsequent subsections.

\subsection{Wiener chaos}
We will now briefly introduce the notion of \textit{Wiener chaos}.

\begin{defi}[Hermite polynomials and Wiener chaos]\label{Def HER +
    Chaos} 
\

{\begin{itemize}
\item[1.] The sequence of {\it Hermite polynomials} $\{H_m : m\geq 0\}$ is defined as follows: $H_0 =1$, and, for $m\geq 1$,
\[
H_m (x) = (-1)^{m}e^{\frac{x^2}{2}}\frac{d^m}{dx^m} e^{-\frac{x^2}{2}}, \quad x\in \R.
\]
It is a standard result that the sequence $\{(m!)^{-1/2}H_m:m\geq 0\}$ is an orthonormal basis of $L^2(\R, \phi_1(x)dx)$.
\item[2.] A {\it multi-index} $\alpha = \{\alpha_i : i\geq 1 \}$ is a sequence of nonnegative integers such that $\alpha_i \neq 0$ only for a finite number of indices $i$. We use the symbol $\Lambda$ in order to indicate the collection of all multi-indices, and use the notation $|\alpha| = \sum_{i\geq 1} \alpha_i$, for every $\alpha\in\Lambda$.
\item[3.] For every integer $q\geq 0$, the $q$th {\it Wiener chaos} associated with ${\bf G}$ is defined as follows: $C_0 = \R$, and, for $q\geq 1$, $C_q$ is the $L^2(P)$-closed vector space generated by random variables of the type
    \begin{equation}\label{Eq : Q-chaos}
    \Phi(\alpha)=\prod^{\infty}_{i=1} H_{\alpha_i}(G_i), \quad \alpha\in\Lambda\,\,\, \text{and} \,\,\,|\alpha| = q.
    \end{equation}
\end{itemize}
}
\end{defi}

It is easily seen that two random variables belonging to Wiener chaoses of different orders are orthogonal in $L^2(\sigma({\bf G}))$. Moreover, since linear combinations of polynomials are dense in $L^2(\sigma({\bf G}))$, one has that $L^2(\sigma({\bf G}))=\bigoplus_{q\geq 0} C_q$, that is, any square-integrable functional of ${\bf G}$ can be written as an infinite sum, converging in $L^2$ and such that the $q$th summand is an element of $C_q$. This orthogonal decomposition of $L^2(\sigma({\bf G}))$ is customarily called the {\it Wiener-It\^{o} chaotic decomposition} of $L^2(\sigma({\bf G}))$. 

\smallskip

It is often convenient to encode random variables in the spaces $C_q$ by means of increasing
tensor powers of Hilbert spaces. To do this, introduce 
an 
(arbitrary) separable real Hilbert space $\mathfrak{H}$ having an orthonormal basis $\{e_i : i\geq 1\}$. For $q\geq 2$, denote
by $\mathfrak{H}^{\otimes q}$ (resp. $\mathfrak{H}^{\odot q}$)  the $q$th tensor power (resp. symmetric
tensor power) of $\mathfrak{H}$; write moreover $\mathfrak{H}^{\otimes 0} =\mathfrak{H}^{\odot 0}=
\R$ and $\mathfrak{H}^{\otimes 1} =\mathfrak{H}^{\odot 1}=
\mathfrak{H}$. With every multi-index
$\alpha\in \Lambda$, we associate the tensor $e(\alpha)\in\mathfrak{H}^{\otimes |\alpha|}$ given by
\[
e(\alpha) = e_{i_1}^{\otimes \alpha_{i_1}}\otimes \cdot\cdot\cdot\otimes e_{i_k}^{\otimes \alpha_{i_k}},
\]
where $\{\alpha_{i_1},...,\alpha_{i_k}\}$ are the non-zero elements of $\alpha$. We also denote by $\tilde{e}(\alpha)\in
\mathfrak{H}^{\odot |\alpha|}$ the canonical symmetrization of $e(\alpha)$. It is well-known that, for every $q\geq 2$, the set $\{\tilde{e}(\alpha):\alpha\in\Lambda, \, |\alpha|=q\}$ is a complete orthogonal system in $\mathfrak{H}^{\odot  q}$. For every $q\geq 1$ and every $h\in\mathfrak{H}^{\odot  q}$ of the form $h = \sum_{\alpha\in\Lambda, \, |\alpha|=q}c_{\alpha}\tilde{e}(\alpha),$ we define
\begin{equation}\label{Eq : ISO}
I_q(h) = \sum_{\alpha\in\Lambda, \, |\alpha|=q}c_{\alpha}\Phi(\alpha),
\end{equation}
where $\Phi(\alpha)$ is given in (\ref{Eq : Q-chaos}). Another classical result (see e.g. \cite{NP11, Nu06}) is that, for every $q\geq 1$, the mapping $I_q : \mathfrak{H}^{\odot  q}\rightarrow C_q $ (as defined in (\ref{Eq : ISO})) is onto, and defines an isomorphism between $C_q$ and the Hilbert space $\mathfrak{H}^{\odot  q}$, endowed with the modified norm $\sqrt{q!}\|\cdot\|_{\mathfrak{H}^{\otimes  q}}$. This means that, for every $h,h'\in \mathfrak{H}^{\odot  q}$, $E[I_q(h)I_q(h')] = q! \langle h , h' \rangle_{\mathfrak{H}^{\otimes  q}}.$ 

\smallskip

Finally, we observe that one can reexpress the Wiener-It\^o chaotic decomposition of $L^2(\sigma({\bf G}))$ as follows: every $F\in L^2(\sigma({\bf G}))$ admits a unique decomposition of the type 
\begin{equation}\label{ChaosExpansion}
F=\sum_{q=0}^\infty I_q(h_q),
\end{equation}
where the series converges in $L^2({\bf G})$,
the symmetric kernels $h_q\in\HH^{\odot q}$, $q\geq1$, are
uniquely determined by $F$, and $I_0(h_0) := E[F]$. This also implies that \[E[F^2] = E[F]^2+  \sum_{q=1}^\infty q!\|h_q\|^2_{\HH^{\otimes q}}.\]

\subsection{The language of Malliavin calculus: chaoses as eigenspaces}

We let the previous notation and assumptions prevail: in particular, we shall fix for the rest of the section a real separable Hilbert space $\HH$, and represent the elements of the $q$th Wiener chaos of ${\bf G}$ in the form (\ref{Eq : ISO}). In addition to the previously introduced notation, $L^2(\HH):=L^2(\sigma({\bf G});\HH) $ indicates the space of all $\HH$-valued random elements $u$,
that are measurable with respect to $\sigma({\bf G})$ and verify the relation $E\big[\|u\|_\HH ^2\big]<\infty$. Note that, as it is customary, $\HH$ is endowed with the Borel $\sigma$-field associated with the distance on $\HH$ given by $(h_1,h_2)\mapsto \|h_1-h_2\|_\HH$.

\smallskip

Let $\mathscr{S}$ be the set of all smooth cylindrical random
variables of the form 
\[
F = g\big(I_1(h_1), \ldots,
I_1(h_n)\big),
\] 
where $n\geq 1$, $g : \R^n \rightarrow \R$ is a
smooth function with compact support and $h_i\in\HH$. The
{\rm Malliavin derivative} of $F$ (with respect to ${\bf G}$) is the element of $L^2(\HH)$ defined as
\[
DF\; =\; \sum_{i =1}^n \partial_i g\big(I_1(h_1), \ldots, I_1(h_n)\big)
h_i.
\]
By
iteration, one can define the $m$th derivative $D^m F$ (which is
an element of ${L}^2 (\HH^{\otimes m})$) for every
$m\geq 2$. For $m\geq 1$, $\sk^{m,2}$ denotes the
closure of $\mathscr{S}$ with respect to the norm $\| \cdot
\|_{m,2}$, defined by the relation
\[
\| F\|_{m,2}^2  =  E[F^2] +\sum_{i=1}^m
E\big[ \| D^i F\|_{\HH^{\otimes i}}^2\big].
\]
It is a standard result that a random variable $F$ as in \eqref{ChaosExpansion} is in $ \sk^{m,2}$ if and only if $\sum_{q\geq 1} q^mq! \|f_q\|^2_{\HH^{\otimes q}}<\infty$, from which one deduces that $\bigoplus_{k=0}^q C_k\in \sk^{m,2}$ for every $q,m\geq 1$. Also, $DI_1(h) = h$ for every $h\in \HH$.
The Malliavin derivative $D$ satisfies the following \textit{chain
rule}: if $g:\R^n\rightarrow\R$ is continuously differentiable
and has bounded partial derivatives,
and if
$(F_1,...,F_n)$ is a vector of elements of $\sk^{1,2}$,
then $g(F_1,\ldots,F_n)\in\sk^{1,2}$ and
\begin{equation}\label{chainR}
Dg(F_1,\ldots,F_n)=\sum_{i=1}^n
\partial_i g(F_1,\ldots, F_n)DF_i.
\end{equation}

\smallskip

In what follows, we denote by $\delta$ the
adjoint of the operator $D$, also called the \textit{divergence
operator}. A random element $u \in L^{2}(\HH)$
belongs to the domain of $\delta$, written ${\rm Dom}\,\delta$, if
and only if it satisfies
\[
| E \left[ \langle D F,u\rangle_{\EuFrak H} \right]|\leq c_u\,\sqrt{E[F^2]}\quad\mbox{for any }F\in{\mathscr{S}},
\]
for some constant $c_u$ depending only on $u$. If $u \in
{\rm Dom}\,\delta $, then the random variable $ \delta(u)$ is
defined by the duality relationship (customarily called
``integration by parts formula''):
\begin{equation}\label{ipp}
E \left[ F \delta(u) \right]=  E \left[  \langle D F, u \rangle_{\HH} \right],
\end{equation}
which holds for every $F \in \sk^{1,2}$. A crucial object for our discussion is the Ornstein-Uhlenbeck semigroup associated with ${\bf G}$.

\begin{defi}[Ornstein-Uhlenbeck semigroup]{\rm Let ${\bf G}'$ be an independent copy of ${\bf G}$, and denote by $E'$ the mathematical expectation with respect to ${\bf G}'$. For every $t\geq 0$ the operator $P_t : L^2(\sigma({\bf G})) \to L^2(\sigma({\bf G}))$ is defined as follows: for every $F({\bf G}) \in L^2(\sigma({\bf G}))$,
\[
P_t F({\bf G}) = E'[F(e^{-t}{\bf G} + \sqrt{1-e^{-2t}}{\bf G}')],
\]
in such a way that $P_0F({\bf G}) = F({\bf G})$ and $P_\infty F({\bf G})= E[F({\bf G})]$. The collection $\{P_t : t\geq 0\}$ verifies the semigroup property $P_{t}P_s = P_{t+s}$ and is called the {\it Ornstein-Uhlenbeck semigroup} associated with ${\bf G}$.}
\end{defi}

The properties of the semigroup $\{P_t : t\geq 0\}$ that are relevant for our study are gathered together in the next statement.

\begin{prop} \begin{enumerate}

\item For every $t>0$, the eigenspaces of the operator $P_t$ coincide with the Wiener chaoses $C_q$, $q=0,1,...$, the eigenvalue of $C_q$ being given by the positive constant $e^{-qt}$.

\item The {\rm infinitesimal generator} of $\{P_t:t\geq 0\}$, denoted by $L$, acts on square-integrable random variables as follows: a random variable $F$ with the form (\ref{ChaosExpansion}) is in the domain of $L$, written ${\rm Dom}\, L$, if and only if $\sum_{q\geq 1}
qI_q(h_q)$ is convergent in $L^2(\sigma({\bf G}))$, and in this case
\[ 
LF = -\sum_{q\geq 1}qI_q(h_q).
\] 
In particular, each Wiener chaos $C_q$ is an eigenspace of $L$, with eigenvalue equal to $-q$.

\item The operator $L$ verifies the following
properties: {\rm (i)} ${\rm Dom}\, L=\sk^{2,2}$, and {\rm (ii)} a random variable $F$ is in ${\rm Dom} L$ if and only if
$F\in{\rm Dom}\,\delta D$ (i.e.
$F\in\sk^{1,2}$ and $DF\in{\rm Dom}\delta$), and in this case one has that
$\delta (DF)=-LF$.

\end{enumerate}

\end{prop}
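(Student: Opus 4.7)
The proof splits naturally into the three assertions, and my plan is to address them in order, recycling Part~1 in each subsequent step.

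For Part~1, the strategy is to verify the eigenvalue equation directly on the generators $\Phi(\alpha) = \prod_i H_{\alpha_i}(G_i)$ of $C_q$. The core input is the one-dimensional Mehler identity
\begin{equation*}
E'[H_m(e^{-t}X + \sqrt{1-e^{-2t}}\,X')] = e^{-mt} H_m(X),
\end{equation*}
valid for independent standard Gaussians $X,X'$, which I would derive either from the generating-function expansion $\exp(\lambda x - \lambda^2/2) = \sum_m H_m(x)\lambda^m/m!$, or from the classical conditional identity $E[H_m(Y)\mid X] = \rho^m H_m(X)$ whenever $(X,Y)$ is a centered Gaussian pair with correlation $\rho$. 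Applying this coordinate by coordinate and using the mutual independence of the auxiliary copies $G_i'$ gives $P_t \Phi(\alpha) = e^{-|\alpha|t}\Phi(\alpha) = e^{-qt}\Phi(\alpha)$. Since $\{\Phi(\alpha): |\alpha|=q\}$ spans a dense subspace of $C_q$ and $P_t$ is an $L^2$-contraction (Jensen with respect to $\mathbf{G}'$), the relation extends by continuity to all of $C_q$.

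For Part~2, I would simply differentiate the spectral decomposition. Writing $F = \sum_{q\geq 0} I_q(h_q)$, Part~1 together with the Pythagorean identity yields
\begin{equation*}
\Big\| \frac{P_tF - F}{t} + \sum_{q\geq 1} q\,I_q(h_q) \Big\|_{L^2}^2 = \sum_{q\geq 1} q!\,\|h_q\|_{\HH^{\otimes q}}^2 \Big( \frac{e^{-qt}-1}{t} + q \Big)^2.
\end{equation*}
Each summand tends to $0$ as $t\downarrow 0$ and is dominated, for $t$ in a bounded interval, by a constant multiple of $q^2 q! \|h_q\|^2$ (since $|e^{-x}-1+x|\leq x^2/2$ for $x\geq 0$, applied with $x=qt$). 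Dominated convergence then shows that the strong $L^2$-limit exists iff $\sum_q q^2 q!\|h_q\|^2 <\infty$, which by orthogonality is exactly the $L^2$-convergence of $\sum_q q\,I_q(h_q)$. This simultaneously identifies ${\rm Dom}\,L$ and the action $LF = -\sum_q q\,I_q(h_q)$.

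For Part~3, assertion (i) is then immediate by comparing the characterisation from Part~2 with the Sobolev criterion recalled in the preceding subsection ($F\in\sk^{m,2}$ iff $\sum_q q^m q!\|h_q\|^2<\infty$, here with $m=2$). For (ii), I would first check $\delta DF = -LF$ on a single chaos element $F = I_q(h)$: using the standard relations $DI_q(h)\in C_{q-1}\otimes\HH$ with $E\|DI_q(h)\|_\HH^2 = q\cdot q!\|h\|^2$ and the isometry $E[I_q(h)I_q(g)] = q!\langle h,g\rangle_{\HH^{\otimes q}}$, one verifies the duality relation (\ref{ipp}) in the form $E[\langle DI_q(h), DG\rangle_\HH] = q\,E[I_q(h)\,G]$ for every smooth cylindrical $G\in\mathscr S$, which both places $DI_q(h)$ in ${\rm Dom}\,\delta$ and gives $\delta D I_q(h) = q\,I_q(h) = -LI_q(h)$. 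The delicate step, and the one I expect to require the most care, is extending the identity from finite chaos sums $F_N = \sum_{q\leq N} I_q(h_q)$ to general $F\in\sk^{2,2}$: the plan is to invoke closedness of $\delta$ applied to the sequence $\{DF_N\}$, using the two $L^2$-convergences $DF_N \to DF$ in $L^2(\HH)$ and $-LF_N \to -LF$ in $L^2$, both of which follow from the chaos characterisations of $\sk^{1,2}$ and of ${\rm Dom}\,L$. This yields ${\rm Dom}\,L \subseteq {\rm Dom}\,\delta D$ with $\delta D = -L$ on that set; the reverse inclusion is obtained by testing a generic $F\in{\rm Dom}\,\delta D$ against each generator $I_q(h)$ via (\ref{ipp}) and reading off the required summability $\sum_q q^2 q!\|h_q\|^2<\infty$ from the resulting identities.
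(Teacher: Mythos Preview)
The paper does not actually supply a proof of this proposition: it is stated as a standard fact about the Ornstein--Uhlenbeck semigroup, with the reader referred to \cite{NP11, Nu06} at the start of Section~\ref{s:pregauss}. So there is no ``paper's own proof'' to compare against; the relevant question is simply whether your argument is correct, and whether it matches what those references do.

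Your outline is correct and is essentially the textbook route. A couple of places could be tightened. In Part~1 you show $P_t|_{C_q} = e^{-qt}\,{\rm Id}$, but the proposition claims the $C_q$ \emph{exhaust} the eigenspaces; this follows immediately from the Wiener--It\^o decomposition together with the distinctness of the eigenvalues $e^{-qt}$ for fixed $t>0$, but it is worth a sentence. In Part~2 the ``only if'' direction is not really a dominated-convergence argument: rather, if the strong limit $G$ of $(P_tF-F)/t$ exists, project onto the $q$th chaos to get $((e^{-qt}-1)/t)\,I_q(h_q)\to -q\,I_q(h_q)$ and hence $G=-\sum_q q\,I_q(h_q)$; the summability $\sum_q q^2 q!\|h_q\|^2<\infty$ then comes from $G\in L^2$, not from a dominating sequence. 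In Part~3(ii) your closure-plus-duality scheme is fine; for the reverse inclusion you might make explicit that the Dirichlet form $E[\langle DI_p(f),DI_q(g)\rangle_\HH]$ is diagonal in the chaos grading (this follows from the already-established $\delta D I_q(g)=q\,I_q(g)$ and orthogonality of chaoses), which is what lets you read off the components of $\delta(DF)$ and recover $\sum_q q^2 q!\|h_q\|^2<\infty$. These are polish items rather than gaps; the plan goes through.
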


In view of the previous statement, it is immediate to describe the
\textit{pseudo-inverse} of $L$, denoted by $L^{-1}$, as follows: for every mean zero random variable $F=\sum_{q\geq 1}I_q(h_q)$ of $L^2(\sigma({\bf G}))$,
one has that 
\begin{equation*}
  L^{-1}F =\sum_{q\geq 1}-\frac{1}{q} I_q(h_q).
\end{equation*}
It is clear that
$L^{-1}$ is an operator with values in $\sk^{2,2}$.

\smallskip

For future reference, we record the following estimate involving random variables living in a finite sum of Wiener chaoses: it is a direct consequence of the hypercontractivity of the Ornstein-Uhlenbeck semigroup -- see e.g. in \cite[Theorem 2.7.2 and Theorem 2.8.12]{NP11}.

\begin{prop}[Hypercontractivity]\label{P : Hyper} Let $q\geq 1$ and $1\leq  s < t <\infty$. Then, there exists a finite constant $c(s,t,q)<\infty$ such that, for every $F\in \bigoplus^q_{k=0} C_k$,
\begin{equation}\label{EQ : hyperC}
E[|F|^t]^{\frac 1t} \leq c(s,t,q)\,E[|F|^s]^{\frac 1s}.
\end{equation}
In particular, all $L^p$ norms, $p\geq 1$, are equivalent on a finite sum of Wiener chaoses.
\end{prop}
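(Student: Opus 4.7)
The plan is to reduce the desired comparison to Nelson's classical hypercontractivity bound for the Ornstein--Uhlenbeck semigroup $\{P_\tau\}$, which states that $\|P_\tau F\|_r \leq \|F\|_p$ whenever $1 < p \leq r < \infty$ and $r-1 \leq e^{2\tau}(p-1)$. The feature that does all the work is that each chaos $C_k$ is an eigenspace of $P_\tau$ with eigenvalue $e^{-k\tau}$, so hypercontractivity translates immediately into a comparison between two $L^p$-norms of a chaotic random variable.

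First I would treat the single-chaos case. For $F \in C_k$ and any $r \geq 2$, set $\tau = \tfrac{1}{2}\log(r-1)$ in order to saturate Nelson's threshold with $p=2$; the eigenvalue relation $P_\tau F = e^{-k\tau}F$ then yields $(r-1)^{-k/2}\|F\|_r \leq \|F\|_2$, i.e.\ $\|F\|_r \leq (r-1)^{k/2}\|F\|_2$. Passing to a finite sum $F = \sum_{k=0}^q F_k \in \bigoplus_{k=0}^q C_k$ via the triangle inequality, and using the trivial bound $\|F_k\|_2 \leq \|F\|_2$ that follows from $L^2$-orthogonality of distinct chaoses, produces
\[
\|F\|_r \leq c_1(r,q)\,\|F\|_2, \qquad c_1(r,q) := (q+1)(r-1)^{q/2},
\]
for every $r \geq 2$ and every $F \in \bigoplus_{k=0}^q C_k$.

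The reverse direction $\|F\|_2 \leq c_2(s,q)\,\|F\|_s$ for $1 \leq s < 2$ is the step that requires some care, and is what I view as the main (mild) obstacle: applying Nelson's inequality chaos-by-chaos would leave one needing to bound $\|F_k\|_s$ by $\|F\|_s$, which is not obvious for $s<2$. I would circumvent this by a log-convexity interpolation: fix any $r > 2$ and write
\[
\|F\|_2 \leq \|F\|_s^{\,1-\theta}\,\|F\|_r^{\,\theta}
\]
with $\theta \in (0,1)$ determined by $\tfrac{1}{2} = \tfrac{1-\theta}{s}+\tfrac{\theta}{r}$ (noting that $\theta = r/(2(r-1)) \in (0,1)$ when $s=1$ and $r>2$, so the case $s=1$ is covered). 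Substituting the bound on $\|F\|_r$ derived above and absorbing the factor $\|F\|_2^{\,\theta}$ into the left-hand side yields the required estimate with $c_2(s,q) = c_1(r,q)^{\theta/(1-\theta)}$ for a fixed choice of $r > 2$.

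To conclude, for general $1 \leq s < t < \infty$ I would chain the two inequalities: when $t \geq 2$, they combine to give $\|F\|_t \leq c_1(t,q)\,c_2(s,q)\,\|F\|_s$; when $t < 2$, monotonicity of $L^p$-norms ($\|F\|_t \leq \|F\|_2$) together with the interpolation estimate suffices. Setting $c(s,t,q)$ equal to the resulting product constant establishes \eqref{EQ : hyperC}, and the assertion that all $L^p$-norms ($p \geq 1$) are equivalent on $\bigoplus_{k=0}^q C_k$ is then immediate.
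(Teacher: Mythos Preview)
Your argument is correct. The paper itself does not supply a proof of this proposition; it merely records the statement and refers the reader to \cite[Theorems 2.7.2 and 2.8.12]{NP11}, indicating that it is a direct consequence of the hypercontractivity of the Ornstein--Uhlenbeck semigroup. Your proposal fills in precisely that standard argument: Nelson's inequality combined with the eigenspace property $P_\tau|_{C_k} = e^{-k\tau}\,\mathrm{id}$ gives the $L^2 \to L^r$ bound on each chaos, the triangle inequality plus $L^2$-orthogonality extends it to $\bigoplus_{k\leq q} C_k$, and the H\"older interpolation step correctly handles $1 \leq s < 2$ without needing to project chaos-by-chaos in $L^s$. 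One cosmetic remark: in the case $t<2$ you invoke ``$\|F\|_t \leq \|F\|_2$'' as monotonicity of $L^p$-norms on a probability space, which is fine, so the chaining is complete.
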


Since we will systematically work on a fixed sum of Wiener chaoses, we will not need to specify the explicit value of the constant $c(s,t,q)$. See again \cite{NP11}, and the references therein, for more details.

\begin{example} Let $W =\{W_t : t\geq 0\}$ be a standard Brownian motion, let $\{e_j : j\geq 1\}$ be an orthonormal basis of $L^2(\R_+, \mathscr{B}(\R_+), dt)=:L^2(\R_+)$, and define $G_j = \int_0^\infty e_j(t)dW_t$. Then, $\sigma(W) = \sigma({\bf G})$, where ${\bf G} = \{G_j : j\geq 1\}$. In this case, the natural choice of a Hilbert space is $\HH = L^2(\R_+)$ and one has the following explicit characterisation of the Wiener chaoses associated with $W$: for every $q\geq 1$, one has that $F\in C_q$ if and only if there exists a symmetric kernel $f\in L^2(\R_+^q)$ such that 
\[
F = q! \int_0^\infty \int_0^{t_1} \cdots \int_0^{t_{q-1}} f(t_1,...,t_q) dW_{t_q}\cdots dW_{t_1} :=q! J_q(f).
\]
The random variable $J_q(f)$ is called the {\rm multiple Wiener-It\^o integral} of order $q$, of $f$ with respect to $W$. It is a well-known fact that, if $F\in \sk^{1,2}$ admits the chaotic expansion $F = E[F] +\sum_{q\geq 1} J_q(f_q)$, then $DF$ equals the random function
\begin{align*}
t&\mapsto \sum_{q\geq 1} qJ_{q-1}(f_q(t,\cdot)) \\ 
& = \sum_{q\geq 1} q! \int_0^\infty \int_0^{t_1} \cdots \int_0^{t_{q-2}} f(t,t_1...,t_{q-1}) dW_{t_{q-1}}\cdots dW_{t_1}, \quad t\in \R_+,
\end{align*}
which is a well-defined element of $L^2(\HH)$.
\end{example}

\subsection{The role of Malliavin and Stein matrices}\label{ss:mat}

Given a vector $F = (F_1,...,F_d)$ whose elements are in $\mathbb{D}^{1,2}$, we define the {\it Malliavin matrix } $\Gamma(F) =\{\Gamma_{i,j}(F) : i,j=1,...,d\}$ as
\[
\Gamma_{i,j}(F) = \langle DF_i, DF_j\rangle_\HH.
\]
The following statement is taken from \cite{nourdin2012absolute}, and provides a simple necessary and sufficient condition for a random vector living in a finite sum of Wiener chaoses to have a density.

\begin{thm}\label{t:nnp} Fix $d,q\geq 1$ and let $F = (F_1,...,F_d)$ be such that $F_i\in \bigoplus_{i=0}^qC_i$, $i=1,...,d$. Then, the distribution of $F$ admits a density $f$ with respect to the Lebesgue measure on $\R^d$ if and only if $E[{\rm det}\, \Gamma(F)]>0$. Moreover, if this condition is verified one has necessarily that ${\rm Ent}(F)<\infty$. 
\end{thm}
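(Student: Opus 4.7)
My plan is to establish Theorem \ref{t:nnp} in three stages: a zero--one dichotomy for the determinant of the Malliavin matrix, the equivalence with existence of a density, and the finiteness of the entropy.

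\smallskip

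\textit{Step 1 (Chaos expansion and dichotomy for $\det\Gamma(F)$).} Since each $F_i$ belongs to $\bigoplus_{k=0}^{q}C_k$, its Malliavin derivative $DF_i$ is an $\HH$-valued random element whose ``coordinates'' lie in $\bigoplus_{k=0}^{q-1}C_k$. The chaos product formula then places each entry $\Gamma_{i,j}(F)=\langle DF_i,DF_j\rangle_{\HH}$ in $\bigoplus_{k=0}^{2(q-1)}C_k$, so that $\det\Gamma(F)$, being a polynomial of degree $d$ in these entries, itself belongs to a finite sum of Wiener chaoses, of order at most $2d(q-1)$. I would then invoke the Carbery--Wright inequality in the form used in \cite{nourdin2012absolute}: any non-zero element of a finite sum of Wiener chaoses is almost surely non-zero, and quantitatively satisfies a small-ball estimate. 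Since $\det\Gamma(F)\geq 0$ as a Gram determinant, this yields the equivalence $E[\det\Gamma(F)]>0\iff \det\Gamma(F)>0$ almost surely.

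\smallskip

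\textit{Step 2 (Equivalence with existence of a density).} Granted Step 1, the ``if'' direction is exactly the Bouleau--Hirsch criterion, applicable because each $F_i$ lies in $\bigcap_{p\geq 1}\sk^{1,p}$ (being in a finite chaotic sum). Conversely, if $\det\Gamma(F)=0$ almost surely, then $DF_1,\ldots,DF_d$ are linearly dependent in $\HH$ on a full-measure event, and a co-area type argument shows that the push-forward $P\circ F^{-1}$ is supported on a Lebesgue--negligible real-analytic variety, ruling out existence of a density. The dichotomy of Step 1 then upgrades the a.s. positivity into $E[\det\Gamma(F)]>0$.

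\smallskip

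\textit{Step 3 (Finiteness of entropy).} The upper bound ${\rm Ent}(F)<\infty$ is immediate: hypercontractivity (Proposition~\ref{P : Hyper}) gives $E[\|F\|^2]<\infty$, and the maximum entropy principle yields ${\rm Ent}(F)\leq {\rm Ent}(\mathscr{N}_d(0,\cov(F)))<\infty$. The delicate point is the lower bound ${\rm Ent}(F)>-\infty$. The plan is to combine Carbery--Wright applied to $\det\Gamma(F)$ with hypercontractivity to show that $f\in L^p(\R^d)$ for some $p>1$; a standard Orlicz-type inequality then converts this into $\int_{\R^d} f|\log f|\,d{\bf x}<\infty$.

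\smallskip

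\textit{Main obstacle.} Steps 1 and 2 are essentially algebraic--analytic and follow a well-established path on Wiener space. The real difficulty lies in Step 3, since densities of chaotic vectors may perfectly well be unbounded (think of $G^2$ for a standard Gaussian $G$). The whole point of invoking Carbery--Wright here is to upgrade the qualitative hypothesis $E[\det\Gamma(F)]>0$ into a \emph{quantitative} anti-concentration estimate on $F$, strong enough to dominate the logarithmic singularity in $\int f|\log f|$ without resorting to any boundedness of $f$.
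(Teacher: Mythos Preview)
Your proposal is correct and follows essentially the same route as the paper. The paper's proof is extremely terse: it cites \cite[Theorem~3.1]{nourdin2012absolute} for the density equivalence (your Steps~1--2) and \cite[Theorem~4.2]{nourdin2012absolute} for the fact that $f\in L^{1+1/n}(\R^d)$ for some $n$, then converts this into an entropy bound via the elementary inequality $\log u\le n(u^{1/n}-1)$, which is precisely the ``Orlicz-type'' step you allude to. Your outline is thus a faithful unpacking of the two citations, with the same key ingredients (zero--one dichotomy from Carbery--Wright, Bouleau--Hirsch for the forward implication, and $L^p$-regularity of the density from negative moments of $\det\Gamma(F)$ combined with hypercontractivity and Malliavin integration by parts).

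Two small remarks. First, you correctly separate the two halves of ``entropy is finite'': the bound ${\rm Ent}(F)<+\infty$ is indeed immediate from the maximum-entropy principle and finite second moments, while the substantive content is ${\rm Ent}(F)>-\infty$, i.e.\ $\int f\log f<\infty$; the paper's displayed inequality establishes exactly this latter direction. Second, in Step~3 you should be aware that Carbery--Wright alone only yields $E[(\det\Gamma(F))^{-r}]<\infty$ for small $r$ (namely $r<1/N$ with $N=2d(q-1)$), so the passage to $f\in L^p$ for some $p>1$ genuinely needs the integration-by-parts machinery of \cite[Theorem~4.2]{nourdin2012absolute} and is not a one-line consequence of the small-ball estimate; your sketch is right in spirit but the execution is where the work lies.
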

\begin{proof}

The equivalence between the existence of a density and the fact that
$E[{\rm det}\, \Gamma(F)]>0$ is shown in \cite[Theorem 3.1]{nourdin2012absolute}.
Moreover, by \cite[Theorem 4.2]{nourdin2012absolute} we have
in this case that the density $f$ satisfies $f\in\bigcup_{p>1}L^p(\R^d)$.
Relying on the inequality
\[
\log u\leq n(u^{1/n}-1),\quad u>0,\quad n\in\mathbb{N}
\]
(which is a direct consequence of $\log u\leq u-1$ for any $u>0$),
one has that
\[
\int_{\R^d} f({\bf x})\log f({\bf x})d{\bf x}\leq n\left(\int_{\R^d}f({\bf x})^{1+\frac1n}d{\bf x} - 1\right).
\]
Hence, by choosing $n$ large enough so that $f\in L^{1+\frac{1}{n}}(\R^d)$,
one obtains that  ${\rm Ent}(F)<\infty$. 
\end{proof}

To conclude, we present a result providing an explicit representation for Stein matrices associated with random vectors in the domain of $D$.

\begin{prop}[Representation of Stein matrices]\label{p:smg} Fix  $d\geq 1$ and let $F = (F_1,...,F_d)$ be a centered random vector whose elements are in $\mathbb{D}^{1,2}$. Then, a Stein matrix for $F$ (see Definition \ref{d:sm}) is given by
\[
\tau_F^{i,j}({\bf x}) = E[\langle -DL^{-1}F_i, DF_j\rangle_\HH \big | F={\bf x}], \quad i,j=1,...,d.
\]
\end{prop}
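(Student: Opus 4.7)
The plan is to verify directly the defining identity \eqref{eq:26bis} for a Stein matrix, namely
\[
E[F_i\, g(F)] \;=\; \sum_{j=1}^d E\bigl[\tau_F^{i,j}(F)\, \partial_j g(F)\bigr],\qquad i=1,\dots,d,
\]
for every sufficiently smooth test function $g:\R^d\to\R$ (say $g\in C^1$ with bounded partial derivatives, so that the manipulations below make sense). The key input is the identity $F_i = -L L^{-1} F_i = \delta(-DL^{-1}F_i)$, which is valid because $F_i$ is centered and because, in general, $\delta D = -L$ on $\mathrm{Dom}\,L$.

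First, I would apply the Malliavin integration by parts formula \eqref{ipp} with $u = -DL^{-1}F_i \in \mathrm{Dom}\,\delta$ to write
\[
E[F_i\, g(F)] \;=\; E\bigl[\delta(-DL^{-1}F_i)\, g(F)\bigr] \;=\; E\bigl[\langle -DL^{-1}F_i,\, D g(F)\rangle_{\HH}\bigr].
\]
Then the chain rule \eqref{chainR}, applied componentwise to the smooth function $g$ evaluated at the vector $(F_1,\dots,F_d)$, gives $Dg(F) = \sum_{j=1}^d \partial_j g(F)\, DF_j$. Substituting this expression into the inner product yields
\[
E[F_i\, g(F)] \;=\; \sum_{j=1}^d E\bigl[\partial_j g(F)\, \langle -DL^{-1}F_i,\, DF_j\rangle_{\HH}\bigr].
\]

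Finally, because $\partial_j g(F)$ is $\sigma(F)$-measurable, the tower property of conditional expectation lets us condition on $F$ inside each summand:
\[
E\bigl[\partial_j g(F)\, \langle -DL^{-1}F_i,\, DF_j\rangle_{\HH}\bigr] \;=\; E\bigl[\partial_j g(F)\, E[\langle -DL^{-1}F_i,\, DF_j\rangle_{\HH}\mid F]\bigr],
\]
which, by the very definition of $\tau_F^{i,j}$, equals $E[\tau_F^{i,j}(F)\,\partial_j g(F)]$. Summing over $j$ we recover \eqref{eq:26bis}, so $\tau_F$ is indeed a Stein matrix for $F$.

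The only point requiring a bit of care is the justification of the integration by parts step: one needs $-DL^{-1}F_i \in \mathrm{Dom}\,\delta$ and enough integrability to apply \eqref{ipp} beyond the class $\mathscr{S}$. Since $F_i \in \mathbb{D}^{1,2}$ is centered, the operator $L^{-1}$ acts on $F_i$ as $-\sum_{q\geq 1}\tfrac{1}{q}I_q(h_q^{(i)})$ and gives an element of $\mathbb{D}^{2,2}$, whence $-DL^{-1}F_i \in \mathbb{D}^{1,2}(\HH) \subset \mathrm{Dom}\,\delta$. A standard density/approximation argument against $g$ (truncating, smoothing, and using dominated convergence under the assumption that both sides of \eqref{eq:26bis} make sense) removes the restriction $g\in\mathscr{S}$ and completes the proof.
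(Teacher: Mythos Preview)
Your proof is correct and follows essentially the same route as the paper: write $F_i=\delta(-DL^{-1}F_i)$, apply the integration by parts formula \eqref{ipp}, expand $Dg(F)$ via the chain rule \eqref{chainR}, and then take conditional expectations. The paper's argument is slightly terser (it simply takes $g\in C^1_c$ and omits the additional justification you give for $-DL^{-1}F_i\in\mathrm{Dom}\,\delta$), but the substance is identical.
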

\begin{proof}
{}Let $g:\R^d \to \R \in C^1_c$. For every $i=1,...,d$, one has that $F_i = -\delta DL^{-1} F_i$. As a consequence, using (in order) (\ref{ipp}) and (\ref{chainR}), one infers that
\[
E[F_i g(F)] = E[\langle -DL^{-1}F_i, Dg(F)\rangle_\HH] = \sum_{j=1}^d E[\langle -DL^{-1}F_i, DF_j\rangle_\HH\,\, \partial_j g(F)].
\]
Taking conditional expectations yields the desired conclusion.
\end{proof}

The next section contains the statements and proofs of our main bounds on a Gaussian space.

\section{Entropic fourth moment bounds on a Gaussian space}\label{s:gauss}

\subsection{Main results}

We let the framework of the previous section prevail: in particular,
${\bf G}$ is a sequence of i.i.d. $\mathscr{N}(0,1)$ random variables,
and the sequence of Wiener chaoses $\{C_q : q\geq 1\}$ associated with
${\bf G}$ is encoded by means of increasing tensor powers of a fixed
real separable Hilbert space $\HH$. 
We will
use the following notation: given a sequence of centered and
square-integrable $d$-dimensional random vectors $F_n =
(F_{1,n},...,F_{d,n})\in \sk^{1,2}$ with covariance matrix  $C_n$,
$n\geq 1$, we let
\begin{equation}\label{e:deltaF}
\Delta_n := E[\|F_n\|^4] - E[\|Z_n\|^4],
\end{equation}
where $\| \cdot \|$ is the Euclidian norm on $\R^d$ and $Z_n\sim \mathscr{N}_d(0, C_n)$.

\medskip

Our main result is the following entropic central limit theorem for sequences of chaotic random variables.

\begin{thm}[Entropic CLTs on Wiener chaos]\label{t:e4m-gene} Let $d\geq 1$ and $q_1,\ldots,q_d\geq 1$ be fixed integers.
Consider vectors
\[
F_n = (F_{1,n},\ldots, F_{d,n}) = (I_{q_1}(h_{1,n}),\ldots, I_{q_d}(h_{d,n})),\quad n\geq 1,
  \]
with $h_{i,n}\in\HH^{\odot q_i}$.
Let $C_n$ denote the covariance matrix of $F_n$ and let $Z_n\sim\mathscr{N}_d(0,C_n)$
be a centered Gaussian random vector in $\R^d$ with the same covariance matrix as $F_n$. Assume that $C_n\to C>0$ and $\Delta_n\to 0$, as $n\to\infty$. Then, the random vector $F_n$ admits a density for $n$ large enough, and
\begin{equation}\label{entropy}
D(F_n\|Z_n) = O(1)\,\Delta_n |\log\Delta_n|\quad\mbox{as $n\to\infty$},
\end{equation}
where $O(1)$ indicates a bounded numerical sequence depending on $d, q_1,...,q_d$, as well as on the sequence $\{F_n\}$.
\end{thm}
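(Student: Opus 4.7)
The plan is to instantiate the four-step roadmap laid out in Section \ref{sec:prel-conc-inform} with $F = F_n$ and $C = C_n$, and then invoke Theorem \ref{main-multi}. Concretely, I need (a) an explicit Stein matrix for $F_n$; (b) a uniform bound on $E[|\tau_{F_n}^{i,j}(F_n)|^{2+\eta}]$ for some $\eta>0$; (c) the total-variation estimate \eqref{ineq-ivan2-multi} with constants independent of $n$; and (d) an estimate of the form $E\|\tau_{F_n}-C_n\|_{H.S.}^2 \leq K\,\Delta_n$. Once (a)--(d) are in place, substituting $\Delta := E\|\tau_{F_n}-C_n\|_{H.S.}^2$ into \eqref{wished} and using the monotonicity of $x\mapsto x|\log x|$ near zero yields $D(F_n\|Z_n)= O(1)\,\Delta_n|\log\Delta_n|$, which is exactly \eqref{entropy}.

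\textbf{Steps (a) and (b).} Since $F_{i,n}=I_{q_i}(h_{i,n})$ is an eigenfunction of $L$ with eigenvalue $-q_i$, one has $-DL^{-1}F_{i,n}=q_i^{-1}DF_{i,n}$, and Proposition \ref{p:smg} furnishes
\[
\tau_{F_n}^{i,j}({\bf x}) = E\!\left[\tfrac{1}{q_i}\langle DF_{i,n},DF_{j,n}\rangle_\HH \,\Big|\, F_n={\bf x}\right].
\]
By the product formula for multiple integrals, $q_i^{-1}\langle DF_{i,n},DF_{j,n}\rangle_\HH \in \bigoplus_{k=0}^{q_i+q_j-2} C_k$, a fixed finite sum of Wiener chaoses. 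Combining conditional Jensen with hypercontractivity (Proposition \ref{P : Hyper}) then gives, for any $\eta>0$,
\[
E\big[|\tau_{F_n}^{i,j}(F_n)|^{2+\eta}\big] \leq c\,E\big[|q_i^{-1}\langle DF_{i,n},DF_{j,n}\rangle_\HH|^2\big]^{1+\eta/2},
\]
which remains uniformly bounded in $n$ since $C_n\to C$ and the variances stay bounded (by step (d) below). Existence of a density for $n$ large follows from Theorem \ref{t:nnp}: the map $n\mapsto E[\det\Gamma(F_n)]$ converges to $\det((q_iC(i,j))_{i,j})>0$ (again by hypercontractivity, since $\det\Gamma(F_n)$ is a polynomial in chaotic entries), so it is strictly positive for large $n$.

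\textbf{Step (d).} Since $E[\tau_{F_n}^{i,j}(F_n)]=C_n(i,j)$ (take $g({\bf x})=x_j$ in the Stein identity) and conditional expectation contracts variance,
\[
E\|\tau_{F_n}-C_n\|_{H.S.}^2 \leq \sum_{i,j=1}^d \mathrm{Var}\!\left(q_i^{-1}\langle DF_{i,n},DF_{j,n}\rangle_\HH\right).
\]
Expanding each inner product via the multiplication formula in terms of contractions $h_{i,n}\otimes_r h_{j,n}$, the right-hand side becomes a sum of squared norms of contractions. A now-standard calculation on Wiener chaos (see \cite{NRaop,NP11}) shows that exactly the same contractions control $\Delta_n = E[\|F_n\|^4]-E[\|Z_n\|^4]$, whence $E\|\tau_{F_n}-C_n\|_{H.S.}^2 \leq K\,\Delta_n$ with $K=K(d,q_1,\ldots,q_d)$. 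This is the multidimensional fourth-moment phenomenon: when $C_n\to C>0$, $\Delta_n\to 0$ forces $\tau_{F_n}$ to converge to $C_n$ in $L^2(\Omega)$ with an explicit rate.

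\textbf{The main obstacle -- step (c).} The technically delicate ingredient is the uniform bound
\[
{\bf TV}\!\left(\sqrt t\,F_n + \sqrt{1-t}\,{\bf x},\,F_n\right) \leq \kappa(1+\|{\bf x}\|_1)(1-t)^{\alpha},\qquad t\in[\tfrac12,1],\;{\bf x}\in\R^d,
\]
with $\alpha\in(0,\tfrac12]$ and $\kappa$ independent of $n$. My plan is to invoke the forthcoming Lemma \ref{technical-but-nevertheless-crucial-lemma}, whose proof combines the Carbery--Wright small-ball inequality \cite{CW} with the smoothing/regularisation approach of Nourdin--Nualart--Poly \cite{nourdin2012absolute}. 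The Carbery--Wright inequality provides the quantitative anti-concentration for polynomials in Gaussian entries needed to upgrade the trivial $L^2$-bound $E\|F_n-(\sqrt t F_n+\sqrt{1-t}\,{\bf x})\|^2 = O((1-t)(1+\|{\bf x}\|^2))$ into a fractional total-variation bound. The key difficulty is uniformity along the sequence: one must verify that the Malliavin matrices $\Gamma(F_n)$ remain nondegenerate in a quantitative sense, which will follow from $C_n\to C>0$ combined with the chaotic structure and the $L^2$-closeness $\tau_{F_n}\to C_n$ established in (d).
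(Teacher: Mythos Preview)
Your proposal is correct and follows essentially the same route as the paper: the Stein matrix via Proposition \ref{p:smg}, the bound $\sum_{j,k}E[(C_n(j,k)-\tau_{F_n}^{j,k}(F_n))^2]\leq K\Delta_n$ from \cite{NRaop}, hypercontractivity for the moment condition \eqref{needed-multi}, and Lemma \ref{technical-but-nevertheless-crucial-lemma} (with $\inf_n E[\det\Gamma(F_n)]>0$ deduced from $C_n\to C>0$) for the uniform total-variation estimate \eqref{ineq-ivan2-multi}, followed by Theorem \ref{main-multi}. The paper in fact obtains the slightly sharper constant $K=1$ in step (d) directly from \cite[Theorem 4.3]{NRaop}, and the limit of $E[\det\Gamma(F_n)]$ is $\det C\prod_i q_i$ (your expression $(q_iC(i,j))_{i,j}$ happens to coincide with the correct limit matrix $(\sqrt{q_iq_j}C(i,j))_{i,j}$ here because $C(i,j)=0$ whenever $q_i\neq q_j$), but these are cosmetic points.
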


One immediate consequence of the previous statement is the following characterisation of entropic CLTs on a finite sum of Wiener chaoses.

\begin{cor}\label{c:co} Let the sequence $\{F_n\}$ be as in the statement of Theorem \ref{t:e4m-gene}, and assume that $C_n \to C>0$. Then, the following three assertions are equivalent, as $n\to \infty$:
\begin{itemize}
\item[\rm (i)] $\Delta_n \to 0$ ;

\item[\rm (ii)] $F_n$ converges in distribution to $Z\sim \mathscr{N}_d(0,C)$;

\item[\rm (iii)] $D(F_n\|Z_n)\to 0$.

\end{itemize}

\end{cor}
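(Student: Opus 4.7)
The plan is to establish the corollary by closing the implication cycle (i)$\Rightarrow$(iii)$\Rightarrow$(ii)$\Rightarrow$(i), using Theorem \ref{t:e4m-gene} as the main engine and combining it with two essentially standard ingredients: the Csiszar--Kullback--Pinsker inequality for entropy $\to$ total variation, and the hypercontractivity Proposition \ref{P : Hyper} to promote distributional convergence to convergence of fourth moments.

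For (i)$\Rightarrow$(iii), I would simply invoke Theorem \ref{t:e4m-gene}: since $\Delta_n \to 0$ (and $\Delta_n > 0$ eventually, so $F_n$ has a density for $n$ large enough), one has $D(F_n \| Z_n) = O(1) \Delta_n |\log \Delta_n|$, and the right-hand side tends to zero because $x |\log x| \to 0$ as $x \to 0^+$.

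For (iii)$\Rightarrow$(ii), I would combine (\ref{ummaumma}) with the convergence of covariance matrices. The Csiszar--Kullback--Pinsker inequality yields ${\bf TV}(F_n, Z_n) \to 0$, and a direct characteristic-function computation (using $C_n \to C > 0$) yields $Z_n \to Z \sim \mathscr{N}_d(0,C)$ in distribution. The triangle inequality for, say, the bounded-Lipschitz metric then gives $F_n \to Z$ in distribution.

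For (ii)$\Rightarrow$(i), I would exploit the chaotic structure to obtain uniform integrability of $\{\|F_n\|^4\}$. Each component $F_{i,n}$ belongs to the fixed chaos $C_{q_i}$; by Proposition \ref{P : Hyper} applied with $s=2$ and $t=5$, one has $E[|F_{i,n}|^5] \leq c(2,5,q_i)\, E[F_{i,n}^2]^{5/2} = c(2,5,q_i)\, C_n(i,i)^{5/2}$, which is uniformly bounded since $C_n \to C$. Hence $\{\|F_n\|^4\}$ is uniformly integrable, so convergence in distribution of $F_n$ to $Z$ (together with the continuous mapping theorem applied to $x \mapsto \|x\|^4$) upgrades to $E[\|F_n\|^4] \to E[\|Z\|^4]$. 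Moreover, $E[\|Z_n\|^4] \to E[\|Z\|^4]$ is immediate from $C_n \to C$ and the explicit formula for fourth moments of a centered Gaussian vector in terms of its covariance. Subtracting gives $\Delta_n \to 0$.

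I do not anticipate any serious obstacle: once Theorem \ref{t:e4m-gene} is granted, all three implications rely on tools already assembled in the paper (Pinsker via (\ref{ummaumma}), hypercontractivity via Proposition \ref{P : Hyper}) plus elementary convergence arguments. The only point requiring a modicum of care is the verification of uniform integrability in (ii)$\Rightarrow$(i), but this is routine on a finite sum of Wiener chaoses.
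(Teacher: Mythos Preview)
Your proposal is correct and follows essentially the same route as the paper. The paper is terser---it simply notes that, in view of Theorem~\ref{t:e4m-gene}, only (ii)$\Rightarrow$(i) requires proof, and then invokes hypercontractivity (\ref{EQ : hyperC}) exactly as you do to get $\sup_n E[\|F_n\|^p]<\infty$ for all $p$, hence $E[\|F_n\|^4]\to E[\|Z\|^4]$ and $\Delta_n\to 0$---while you spell out the implications (i)$\Rightarrow$(iii) and (iii)$\Rightarrow$(ii) more explicitly via Pinsker and $C_n\to C$; but the substance is identical.
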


\medskip

The proofs of the previous results are based on two technical lemmas that are the object of the next section.

\subsection{Estimates based on the Carbery-Wright inequalities}

We start with a generalisation of the inequality proved by Carbery and Wright in \cite{CW}. Recall that, in a form that is adapted to our framework, the main finding of \cite{CW} reads as follows: there is a universal constant $c>0$ such that, for any polynomial $Q:\mathbb{R}^n \rightarrow \mathbb{R}$ of degree at most $d$ and  any $\alpha>0$ we have
\begin{equation}  \label{ff1}
E[Q(X_1, \dots, X_n)^2] ^{\frac 1{2d}} \,\,P(|Q(X_1, \dots, X_n)| \leq \alpha) \leq cd\alpha^{\frac 1d},
\end{equation}
where $X_1, \dots, X_n$ are independent random variables with common distribution $\mathscr{N}(0,1)$.

\begin{lemma}\label{carbery-wright}
Fix $d,q_1,\ldots,q_d\geq 1$, and let 
$F=(F_{1},\ldots,F_{d})$ be a random vector such that
$F_{i}=I_{q_i}(h_i)$ with $h_i\in \HH^{\odot q_i}$. Let $\Gamma =
\Gamma(F)$ denote the Malliavin matrix of $F$, and assume that $E[\det
\Gamma]>0$ (which is equivalent to assuming that $F$ has a density  
by Theorem \ref{t:nnp}).
Set $N=2d(q-1)$
with $q=\max_{1\leq i\leq d} q_i$.
Then, there exists a universal constant $c>0$ such that
\begin{equation}\label{carbery}
P(\det \Gamma \leq \lambda)\leq cN\lambda^{1/N}(E[\det\Gamma])^{-1/N}.
\end{equation}
\end{lemma}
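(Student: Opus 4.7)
The plan is to view $\det\Gamma$ as a nonnegative polynomial of degree at most $N$ in the underlying Gaussian sequence $\mathbf{G}$, and then to apply the Carbery--Wright inequality \eqref{ff1} directly to $Q=\det\Gamma$. The degree bookkeeping is the crucial input: for each $i$, the Malliavin derivative $DF_i$ is an $\HH$-valued random element whose components all belong to the $(q_i-1)$th Wiener chaos, hence are polynomials of degree at most $q_i-1$ in $\mathbf{G}$. Therefore every entry $\Gamma_{i,j}=\langle DF_i,DF_j\rangle_{\HH}$ is a polynomial of degree at most $(q_i-1)+(q_j-1)\leq 2(q-1)$, and since $\det\Gamma$ is a signed sum of products of $d$ such entries it is a polynomial of degree at most $2d(q-1)=N$.

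Because $F$ may depend on the whole infinite sequence, I would make the preceding claim rigorous by finite-dimensional approximation. Fix an orthonormal basis $\{e_j\}$ of $\HH$, project each $h_i$ onto the symmetric tensor power $\mathrm{span}(e_1,\ldots,e_n)^{\odot q_i}$ to obtain $h_i^{(n)}\to h_i$ in $\HH^{\odot q_i}$, and define $F_i^{(n)}=I_{q_i}(h_i^{(n)})$ with associated Malliavin matrix $\Gamma^{(n)}$. Each $\det\Gamma^{(n)}$ is then a bona fide nonnegative polynomial of degree at most $N$ in the finitely many Gaussians $(G_1,\ldots,G_n)$, so the Carbery--Wright inequality \eqref{ff1} gives
$$P(\det\Gamma^{(n)}\leq\lambda)\leq cN\lambda^{1/N}\,E[(\det\Gamma^{(n)})^2]^{-1/(2N)}.$$
Since $\det\Gamma^{(n)}\geq 0$, Jensen's inequality yields $E[(\det\Gamma^{(n)})^2]^{1/2}\geq E[\det\Gamma^{(n)}]$, so the right-hand side above is bounded by $cN\lambda^{1/N}E[\det\Gamma^{(n)}]^{-1/N}$.

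It remains to pass to the limit $n\to\infty$. The convergence $h_i^{(n)}\to h_i$ implies $DF_i^{(n)}\to DF_i$ in $L^2(\HH)$, and since the entries of $\Gamma^{(n)}$ all live in a fixed direct sum of Wiener chaoses of bounded order, hypercontractivity (Proposition \ref{P : Hyper}) upgrades the resulting $L^1$ convergence $\Gamma_{i,j}^{(n)}\to\Gamma_{i,j}$ into convergence of $\det\Gamma^{(n)}\to\det\Gamma$ in every $L^p$. In particular $E[\det\Gamma^{(n)}]\to E[\det\Gamma]>0$ and $\det\Gamma^{(n)}\to\det\Gamma$ in distribution. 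Passing to the limit at continuity points of the distribution function of $\det\Gamma$, and then extending to arbitrary $\lambda$ by right-continuity of the CDF together with the continuity in $\lambda$ of the right-hand side, delivers \eqref{carbery}. The only slightly delicate point I anticipate is this limit passage; the polynomial-degree count and the Jensen step are routine.
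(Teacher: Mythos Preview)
Your proof is correct and follows essentially the same route as the paper: show that $\det\Gamma$ lies in $\bigoplus_{k\le N}C_k$, approximate by genuine polynomials of degree at most $N$ in finitely many Gaussians, apply the Carbery--Wright inequality \eqref{ff1}, downgrade the $L^2$ norm to the $L^1$ norm via Jensen, and pass to the limit. The only cosmetic difference is that the paper approximates $\det\Gamma$ directly by polynomials $Q_n(I_1(e_1),\ldots,I_1(e_n))$ (citing the construction in \cite{nourdin2012convergence}), whereas you project the kernels $h_i$ and recompute the Malliavin matrix $\Gamma^{(n)}$; your variant has the pleasant side effect that $\det\Gamma^{(n)}$ is automatically a nonnegative Gram determinant, so no absolute values are needed, and your treatment of the limit passage (continuity points plus right-continuity of the CDF) is more explicit than the paper's one-line ``letting $n$ tend to infinity''.
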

{\it Proof}. Let $\{e_i: i\geq 1\} $ be an orthonormal basis of $\mathfrak{ H}$. Since $\det \Gamma$ is a polynomial of degree $d$ in the entries of $\Gamma$ and because each entry of $\Gamma$ belongs to
$\bigoplus_{k=0}^{2q-2}C_k$ by the product formula for multiple integrals (see, e.g., \cite[Chapter 2]{NP11}), we have, by iterating the product formula, that 
$\det\Gamma \in \bigoplus_{k=0}^NC_k$.
Thus, there exists a sequence $\{Q_n, n\geq 1\}$ of real-valued polynomials of degree at most $N$ such that the random variables
$Q_n(I_1(e_1),\ldots,I_1(e_n))$
converge in $L^2$ and almost surely to $\det \Gamma$ as $n$ tends to infinity (see \cite[proof of Theorem 3.1]{nourdin2012convergence} for an explicit construction).
Assume now that $E[\det \Gamma]>0$. Then, for $n$ sufficiently large,
$E[|Q_n(I_1(e_1),\ldots,I_1(e_n))|]>0$.
We deduce from the estimate (\ref{ff1})  the existence of a universal constant $c>0$ such that, for any $n\geq 1$,
\[
P(|Q_n(I_1(e_1),\ldots,I_1(e_n)| \leq \lambda)\leq cN\lambda^{1/N}(E[Q_n(I_1(e_1),\ldots,I_1(e_n)^2])^{-1/2N}.
\]
Using the property
\[
E[Q_n(I_1(e_1),\ldots,I_1(e_n) ^2]\geq (E[|Q_n(I_1(e_1),\ldots,I_1(e_n)| ])^2
\]
we obtain
\[
P(|Q_n(I_1(e_1),\ldots,I_1(e_n)| \leq \lambda)\leq cN\lambda^{1/N}(E[| Q_n(I_1(e_1),\ldots,I_1(e_n))|])^{-1/N},
\]
from which (\ref{carbery}) follows by letting $n$ tend to infinity.
\qed

\medskip

The next statement, whose proof follows the same lines than that of
\cite[Theorem 4.1]{nourdin2012absolute}, provides an upper bound on
the total variation distance between the distribution of $F$ and that of
$\sqrt{t}F+\sqrt{1-t}{\bf x}$, for every ${\bf x}\in\R^d$ and every $t\in[1/2;
1]$. Although Lemma~\ref{technical-but-nevertheless-crucial-lemma} is,
in principle, very close to \cite[Theorem 4.1]{nourdin2012absolute}, we
detail its proof because, here, we need to keep track of the way the
constants behave with respect to ${\bf x}$.

\begin{lemma}\label{technical-but-nevertheless-crucial-lemma}
Fix $d,q_1,\ldots,q_d\geq 1$, and let $F=(F_{1},\ldots,F_{d})$ be a random vector as in Lemma \ref{carbery-wright}.
Set $q=\max_{1\leq i\leq d} q_i$.
 Let $C$ be the covariance matrix of $F$,
let $\Gamma = \Gamma(F)$ denote the Malliavin matrix of $F$, and assume that $\beta:=E[\det\Gamma]>0$.
Then, there exists a constant $c_{q,d,\|C\|_{H.S.}}>0$ (depending only on $q$, $d$
and $\|C\|_{H.S.}$ --- with a continuous dependence in the last parameter)
such that, for any ${\bf x}\in\R^d$ and $t\in[\frac12,1]$,
\begin{equation}\label{ineg-fin}
{\bf TV}(\sqrt{t}F+\sqrt{1-t}{\bf x},F)\leq c_{q,d,\|C\|_{H.S.}}
\left(\beta^{-\frac{1}{N+1}}\wedge 1\right)(1+\|{\bf x}\|_1)\, (1-t)^{\frac{1}{2(2N+4)(d+1)+2}}.
\end{equation}
Here, 
$N=2d(q-1)$.
\end{lemma}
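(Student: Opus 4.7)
The plan is to adapt the proof of \cite[Theorem 4.1]{nourdin2012absolute}, whose strategy is a smoothing-plus-integration-by-parts argument coupled with the Carbery--Wright small-ball estimate recorded in Lemma \ref{carbery-wright}. The only genuinely novel ingredient here, compared with that reference, is the need to track the dependence on the shift ${\bf x}$; this will enter linearly in the final bound via an elementary Lipschitz estimate on the smooth part, whereas the mollification error is ${\bf x}$-independent.

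Using the representation \eqref{umma} of total variation (and a standard density argument), it suffices to bound $|E[h(\sqrt{t}F+\sqrt{1-t}{\bf x})] - E[h(F)]|$ uniformly over smooth test functions $h$ with $\|h\|_\infty \leq 1$. Fix a nonnegative, smooth, compactly supported mollifier $\rho$ of unit mass, let $\rho_\epsilon(y) = \epsilon^{-d}\rho(y/\epsilon)$ and $h_\epsilon = h\ast\rho_\epsilon$, and split
\begin{align*}
E[h(\sqrt{t}F+\sqrt{1-t}{\bf x})] - E[h(F)]
&= E[h_\epsilon(\sqrt{t}F+\sqrt{1-t}{\bf x})] - E[h_\epsilon(F)]\\
&\quad + \bigl(E[(h-h_\epsilon)(\sqrt{t}F+\sqrt{1-t}{\bf x})] - E[(h-h_\epsilon)(F)]\bigr).
\end{align*}
The smooth contribution is handled by the elementary Lipschitz estimate
\[
|E[h_\epsilon(\sqrt{t}F+\sqrt{1-t}{\bf x})] - E[h_\epsilon(F)]|
\leq \|\nabla h_\epsilon\|_\infty\, E\|(\sqrt{t}-1)F + \sqrt{1-t}\,{\bf x}\|_1
\leq C\epsilon^{-1}\sqrt{1-t}\,(1+\|{\bf x}\|_1),
\]
the constant depending on $\|C\|_{H.S.}$ through $E\|F\|_1 \leq \sqrt{{\rm tr}(C)}$ and the use of $1-\sqrt{t}\leq \sqrt{1-t}$ for $t\in[1/2,1]$.

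For the two mollification errors, the assumption $\beta = E[\det\Gamma]>0$ together with Theorem \ref{t:nnp} and the quantitative integrability estimate in \cite[Theorem 4.2]{nourdin2012absolute} (whose proof relies precisely on Lemma \ref{carbery-wright}) shows that $F$ admits a density $f \in L^p(\R^d)$ with $\|f\|_p$ controlled by a negative power of $\beta$, for some $p>1$ depending on $d,q$. Since the density of $\sqrt{t}F+\sqrt{1-t}\,{\bf x}$ is the translate-rescaling $y\mapsto t^{-d/2}f((y-\sqrt{1-t}\,{\bf x})/\sqrt{t})$, its $L^p$-norm equals $t^{-d(p-1)/(2p)}\|f\|_p \leq 2^{d/2}\|f\|_p$ for $t \in [1/2,1]$, \emph{uniformly in} ${\bf x}$. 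Arguing exactly as in \cite[proof of Theorem 4.1]{nourdin2012absolute}---where one combines H\"older's inequality with Malliavin-type smoothness bounds on the density---one obtains
\[
\max\bigl\{|E[(h-h_\epsilon)(F)]|,\ |E[(h-h_\epsilon)(\sqrt{t}F+\sqrt{1-t}{\bf x})]|\bigr\}
\leq c_{q,d,\|C\|_{H.S.}}\,\beta^{-\frac{1}{N+1}}\epsilon^{\gamma},
\]
with $\gamma = 1/((2N+4)(d+1))$.

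Putting the two estimates together and optimizing over $\epsilon > 0$---specifically, choosing $\epsilon^{1+\gamma}$ proportional to $\sqrt{1-t}\,(1+\|{\bf x}\|_1)\,\beta^{1/(N+1)}$---yields a bound of the form $c\,\beta^{-1/(N+1)}(1+\|{\bf x}\|_1)^{\gamma/(1+\gamma)}(1-t)^{\gamma/(2(1+\gamma))}$. Since $\gamma/(2(1+\gamma)) = 1/(2(2N+4)(d+1)+2)$ and $(1+\|{\bf x}\|_1)^{\gamma/(1+\gamma)} \leq 1+\|{\bf x}\|_1$, the announced inequality \eqref{ineg-fin} follows, with the factor $\beta^{-1/(N+1)}\wedge 1$ arising from combining this estimate with the trivial bound ${\bf TV}\leq 1$. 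The main obstacle is the mollification estimate with the explicit exponent $\gamma$ and the explicit $\beta$-dependence; this is the only step that genuinely requires the Carbery--Wright estimate from Lemma \ref{carbery-wright}, and it is borrowed essentially verbatim from \cite{nourdin2012absolute}. The key new observation, which makes the adaptation immediate, is that shifting $F$ by the deterministic vector $\sqrt{1-t}\,{\bf x}$ leaves the $L^p$-norm of the density unchanged, so that the full ${\bf x}$-dependence is isolated in the Lipschitz estimate of the smooth part.
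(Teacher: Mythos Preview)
Your overall strategy is the same as the paper's --- mollify, control the smooth part by a Lipschitz estimate, and bound the mollification error via the machinery of \cite{nourdin2012absolute} --- but the substance of the argument is hidden in the phrase ``arguing exactly as in \cite[proof of Theorem 4.1]{nourdin2012absolute}'', and your description of what happens there is not accurate.

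The concrete gap is your treatment of the mollification error. You claim it is ${\bf x}$-independent and can be obtained from the $L^p$ density bound of \cite[Theorem 4.2]{nourdin2012absolute}. It is true that $\|g\|_p$ is invariant under translation, but knowing $f\in L^p$ gives \emph{no rate} on $|E[(h-h_\epsilon)(F)]|$: writing this as $\int h\,(f-f\!*\!\check\rho_\epsilon)$ bounds it by $\|f-f\!*\!\check\rho_\epsilon\|_1$, and an $L^p$ bound on $f$ does not control that quantity in terms of $\epsilon$. The rate in \cite[Theorem 4.1]{nourdin2012absolute} (and in the paper's proof) comes instead from a Malliavin integration-by-parts formula involving the Poisson kernel $Q_d$; this requires first truncating to test functions supported in $[-M,M]^d$ (introducing a further parameter $M$ and an error $c(1+\|{\bf x}\|_\infty)/M$ that you omit), and the resulting Poisson-kernel estimate carries a factor $(M+\|{\bf x}\|_1)^{d/(d+1)}$. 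Thus ${\bf x}$ enters the mollification bound as well, contradicting your ``key new observation''. The correct reason the argument transfers to $\sqrt{t}F+\sqrt{1-t}\,{\bf x}$ is not translation-invariance of density norms but the fact that $D(\sqrt{t}F+\sqrt{1-t}\,{\bf x})=\sqrt{t}\,DF$, so the Malliavin matrix is $t\Gamma$ and the Carbery--Wright input survives with $E[t^d\det\Gamma]\geq 2^{-d}\beta$.

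In short: your final exponent is right and the skeleton is right, but you have omitted the truncation step, misidentified the mechanism that yields the mollification rate, and wrongly asserted that all ${\bf x}$-dependence lives in the Lipschitz term. The paper's proof makes all of this explicit by carrying three auxiliary parameters $M,\e,R$ (in addition to the mollification scale $\alpha$) through Steps 1--5 and optimising them jointly in Step 6.
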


\noindent
{\it Proof}.
The proof is divided into several steps.
In what follows, we fix $t\in [\frac12,1]$ and ${\bf x}\in\R^d$ and we use the convention that
$c_{\{\cdot\}}$ denotes a constant in $(0,\infty)$ that only depends on the arguments inside the bracket and whose value is allowed to change from one line to another.

\medskip

\noindent{\it Step 1}. One has that
 $E\big[\|F\|^2_\infty\big]
\leq c_d E\big[\|F\|_2^2\big]\leq c_d\sqrt{\sum_{i,j=1}^d C(i,i)^2}\leq c_d\|C\|_{H.S.}$ 
 so that 
 $E\big[\|F\|_\infty\big]
 \leq c_d\sqrt{\|C\|_{H.S.}}=c_{d,\|C\|_{H.S.}}$ 
 Let $g:\R^d\to\R$ be a (smooth) test function bounded by 1. We can write, for any $M\geq 1$,
\begin{align}
&\big| E[g(\sqrt{t}F+\sqrt{1-t}{\bf x})]-E[g(F)]\big|\notag\\
&\leq
\left|
E\left[
(g{\bf 1}_{[-M/2,M/2]^d})(\sqrt{t}F+\sqrt{1-t}{\bf x})
\right]
-E\left[
(g{\bf 1}_{[-M/2,M/2]^d})(F)
\right]
\right|\notag\\
&\quad +P(\|\sqrt{t}F+\sqrt{1-t}{\bf x}\|_\infty\geq M/2)
+ P(\|F\|_\infty\geq M/2)
\notag\\
&\leq
\sup_{
\stackrel{\|\phi\|_\infty\leq 1}{{\rm supp}\phi\subset [-M,M]^d}
}\big| E[\phi(\sqrt{t}F+\sqrt{1-t}{\bf x})]-E[\phi(F)]\big|\notag\\
&\quad  +\frac{2}{M}\left(E\left[\|\sqrt{t}F+\sqrt{1-t}{\bf x}\|_\infty\right]
+E\left[\|F\|_\infty\right]\right)\notag\\
&\leq\!\!\!\!\!\!
\sup_{
\stackrel{\|\phi\|_\infty\leq 1}{{\rm supp}\phi\subset [-M,M]^d}
}\big| E[\phi(\sqrt{t}F+\sqrt{1-t}{\bf
  x})]-E[\phi(F)]\big|+\frac{c_{d,\|C\|_{H.S.}}}{M}(1+\|{\bf
  x}\|_\infty).\label{limit334} 
\end{align}

\medskip

\noindent{\it Step 2}. 
Let $\phi:\R^d\to\R$ be $\mathcal{C}^\infty$ with compact support in $[-M,M]^d$ and satisfying $\|\phi\|_\infty\leq 1$.
Let $0<\alpha\leq 1$ and let $\rho:\R^d\to\R_+$ be in $\mathcal{C}^\infty_c$ and satisfying $\int_{\R^d}\rho(x)dx=1$.
Set $\rho_\alpha(x)=\frac1{\alpha^d}\rho(\frac{x}{\alpha})$.
By \cite[formula (3.26)]{nourdin2012convergence}, we have that
$\phi*\rho_\alpha$  is Lipschitz continuous with constant $1/\alpha$.
We can thus write,
\begin{align}
&\big|
E[\phi(\sqrt{t}F+\sqrt{1-t}{\bf x})]-E[\phi(F)]
\big|\notag\\
&\leq\left|
E\left[\phi*\rho_\alpha(\sqrt{t}F+\sqrt{1-t}{\bf x})-\phi*\rho_\alpha(F)\right]
\right|+
\left|
E\left[\phi(F)-\phi*\rho_\alpha(F) \right] \right|
\notag\\
& \quad  +
\left|
E\left[\phi(\sqrt{t}F+\sqrt{1-t}{\bf x})-\phi*\rho_\alpha(\sqrt{t}F+\sqrt{1-t}{\bf x}) \right] \right| \notag\\
&\leq\frac{\sqrt{1-t}}{\alpha}\big(E\big[\|F\|_\infty\big]+\|{\bf x}\|_\infty\big)+
\left|
E\left[\phi(F)-\phi*\rho_\alpha(F) \right] \right|\notag\\
&\quad +
\left|
E\left[\phi(\sqrt{t}F+\sqrt{1-t}{\bf x})-\phi*\rho_\alpha(\sqrt{t}F+\sqrt{1-t}{\bf x}) \right] \right|\notag\\
&\leq c_{d,\|C\|_{H.S.}}\frac{\sqrt{1-t}}{\alpha}\big(1+\|{\bf x}\|_1\big)+
\left|
E\left[\phi(F)-\phi*\rho_\alpha(F) \right] \right|\notag\\
& \quad +
\left|
E\left[\phi(\sqrt{t}F+\sqrt{1-t}{\bf
    x})-\phi*\rho_\alpha(\sqrt{t}F+\sqrt{1-t}{\bf x}) \right]
\right|.   \label{9} 
\end{align}
In order to estimate the two last terms in (\ref{9}),  we decompose the expectation into two parts using the identity
\[
1=\frac{\e}{t^d\det\Gamma +\e}+\frac{t^d\det\Gamma }{t^d\det\Gamma +\e},\quad\e>0.
\]

\medskip

 \noindent{\it Step 3}. For all $\e,\lambda>0$
and using (\ref{carbery}),
\begin{align*}
 & E\left[\frac{\e}{t^d\det\Gamma+\e}\right]\\
& \leq
 E\left[\frac{\e}{t^d \det\Gamma +\e}\,{\bf 1}_{\{\det\Gamma>\lambda\}}\right]
+cN\left(\frac{\lambda}{E[\det\Gamma]}\right)^{1/N}\\
& \leq \frac{\e}{t^d \lambda}+cN\left(\frac{\lambda}{\beta}\right)^{1/N}.
\end{align*}
Choosing $\lambda=\e^{\frac{N}{N+1}}\beta^{\frac{1}{N+1}}$
yields  
\[
E\left[\frac{\e}{t^d\det\Gamma+\e}\right]\leq (t^{-d}+cN)\,\left(\frac{\e}{\beta}\right)^{\frac{1}{N+1}}
\leq c_{q,d}\left(\frac{\e}{\beta}\right)^{\frac{1}{N+1}}
\quad\mbox{(recall that $t\geq\frac12$}).
\]
 As a consequence, 
\begin{align}
&
\left|
E\left[\phi(\sqrt{t}F+\sqrt{1-t}{\bf x})-\phi*\rho_\alpha(\sqrt{t}F+\sqrt{1-t}{\bf x}) \right] \right|\notag
\\
&= \left|
E\left[\left(\phi(\sqrt{t}F+\sqrt{1-t}{\bf
      x})-\phi*\rho_\alpha(\sqrt{t}F+\sqrt{1-t}{\bf x})\right)
\right. \right. \times \nonumber \\
& \quad \times  \left. \left. \left(\frac{\e}{t^d\det\Gamma+\e}+\frac{t^d\det\Gamma}{t^d\det\Gamma +\e}\right)\right]
\right|\notag\\
&\leq c_{q,d}\left(\frac{\e}{\beta}\right)^{\frac{1}{N+1}} \nonumber
\\
& \quad +
\left|
E\left[\big(
\phi(\sqrt{t}F+\sqrt{1-t}{\bf x})-\phi*\rho_\alpha(\sqrt{t}F+\sqrt{1-t}{\bf x})\big)
\,\frac{t^d\det\Gamma}{t^d\det\Gamma+\e}\right]
\right|.  \notag\\
\label{7}
\end{align}

\medskip

\noindent{\it Step 4}.
In this step we recall from \cite[page 11]{nourdin2012absolute} the following integration by parts formula (\ref{eq1}).
Let $h:\R^d\to\R$ be a function in $\mathcal{C}^\infty$ with compact support, and consider a random variable $W\in \mathbb{D}^\infty$. Consider the Poisson kernel in $\mathbb{R}^d$, defined as the solution to the equation $\Delta Q_d=\delta_0$. We know that 
$Q_2(x)= c_2 \log |x|$ and  $Q_d(x) =c_d |x|^{2-d}$ for $d\neq 2$.
We have the following identity
\begin{align}  \label{eq1}
&  E[W\det \Gamma \,h(\sqrt{t}F+\sqrt{1-t}{\bf x})] \nonumber \\
& = \frac{1}{\sqrt{t}}\sum_{i=1}^d E\left[ A_{i}(W)\int_{\mathbb{R}^d} h(y) \partial _i Q_d(\sqrt{t}F+\sqrt{1-t}{\bf x}-y)dy \right],
\end{align}
where
\begin{align*}
A_{i}(W)
&=-\sum_{a=1}^{d}\big(
\langle D(W({\rm Adj}\Gamma)_{a,i}),DF_{a}\rangle_\HH+({\rm Adj}\Gamma)_{a,i}WLF_{a}
\big),
\end{align*}
with ${\rm Adj}$ the usual adjugate matrix operator.

\medskip

\noindent{\it Step 5}.
Let us apply the identity (\ref{eq1}) to the function $h=\phi-\phi*\rho_\alpha$ and to the random variable $W=W_{\e}=\frac{ 1}{t^d\det\Gamma +\e}\in \mathbb{D}^\infty$; 
we obtain
\begin{align}
 &E\left[(\phi-\phi*\rho_\alpha)(\sqrt{t}F+\sqrt{1-t}{\bf x})\frac{t^d\det\Gamma}{t^d\det\Gamma +\e}\right]\notag\\
&=t^{d-\frac12}\sum_{i=1}^d E\left[ A_{i}(W_{\e})\int_{\mathbb{R}^d} (\phi-\phi*\rho_\alpha)({\bf y}) \partial _i Q_d(\sqrt{t}F+\sqrt{1-t}{\bf x}-{\bf y})d{\bf y} \right].
\label{claim2}
\end{align}
From the hypercontractivity property together with the equality
\begin{align*}
A_{i}(W_{\e})&=\sum_{a=1}^{d}\Bigg\{ -\frac{ 1}{t^d\det\Gamma +\e}
\left(
\langle D({\rm Adj}\Gamma)_{a,i},DF_{a}\rangle_\HH-({\rm Adj}\Gamma)_{a,i}     LF_{a}\right)\\
&\quad +\frac {1} { ( t^d\det\Gamma +\e )^2}  ({\rm Adj}\Gamma)_{a,i}
\langle D(\det\Gamma),DF_{a}\rangle_\HH  \Bigg\},
\end{align*}
one immediately deduces the existence of $c_{q,d,\|C\|_{H.S.}}>0$ such that
\[
E[A_{i}(W_{\e})^2] \leq c_{q,d,\|C\|_{H.S.}} \,\e^{-4}.
\]
On the other hand, in \cite[page 13]{nourdin2012absolute} it is shown that there exists $c_d>0$  such that, for any $R>0$ and 
$u\in\R^d$,
\[
\left|\int_{\mathbb{R}^d} (\phi-\phi*\rho_\alpha)({\bf y}) \partial _i Q_d({\bf u}-{\bf y})d{\bf y}\right|\leq c_d \left( R+ \alpha+ \alpha R^{-d} (\|{\bf u}\|_1+M)^d \right).
\]
Substituting this estimate into (\ref{claim2}) and assuming that $M\geq 1$,  yields
\begin{align*}
&   \left|E\left[(\phi-\phi*\rho_\alpha)(\sqrt{t}F+\sqrt{1-t}{\bf
      x})\frac{t^d\det\Gamma}{t^d\det\Gamma +\e}\right]  \right|\\
& \leq
 c_{q,d,\|C\|_{H.S.}}\,\e^{-2} \left( R+ \alpha+ \alpha R^{-d}  (M+\|{\bf x}\|_1)^d  \right).
\end{align*}
Choosing $R=\alpha^{\frac 1{d+1}} (M+\|{\bf x}\|_1)^{\frac d {d+1}}$ and assuming $\alpha \leq 1$, we obtain
 \begin{align} \label{eq2}
&  \left|E\left[(\phi-\phi*\rho_\alpha)(\sqrt{t}F+\sqrt{1-t}{\bf
      x})\frac{t^d\det\Gamma}{t^d\det\Gamma+\e}\right]  \right|\nonumber\\
& \leq
c_{q,d,\|C\|_{H.S.}}\,\e^{-2}  \alpha^{\frac 1{d+1}} (M+\|{\bf x}\|_1)^{\frac {d}{d+1}}.
 \end{align}
It is worthwhile noting that the inequality (\ref{eq2}) is valid for any $t\in[\frac12,1]$, in particular for $t=1$.

\medskip

\noindent{\it Step 6}. From (\ref{9}),  (\ref{7}) and (\ref{eq2}) we obtain
\begin{align*}
&|E[\phi(\sqrt{t}F+\sqrt{1-t}{\bf x})]-E[\phi(F)] |\\
&\leq c_{d,\|C\|_{H.S.}}\frac{\sqrt{1-t}}{\alpha}(1+\|{\bf x}\|_1)+c_{q,d}
\left(\frac{\e}{\beta}\right)^{\frac{1}{N+1}} \\
& \quad +c_{q,d,\|C\|_{H.S.}}\,\e^{-2}\alpha^{\frac{1}{d+1}}(M+\|{\bf x}\|_1)^{\frac{d}{d+1}}.
\end{align*}
By plugging this inequality into (\ref{limit334}) we  thus obtain that, for every $M\geq 1$, $\e>0$ and $0<\alpha \leq 1$:
\begin{align}
&{\bf TV}(\sqrt{t}F+\sqrt{1-t}{\bf x},F)\notag\\
&\leq c_{d,\|C\|_{H.S.}}\frac{\sqrt{1-t}}{\alpha}(1+\|{\bf x}\|_1)+c_{q,d}
\left(\frac{\e}{\beta}\right)^{\frac{1}{N+1}}+c_{q,d,\|C\|_{H.S.}}\e^{-2}\alpha^{\frac{1}{d+1}}(M+\|{\bf x}\|_1)^{\frac{d}{d+1}}\notag\\
&\quad +
\frac{c_{d,\|C\|_{H.S.}}}{M}(1+\|{\bf x}\|_1)\notag\\
&\leq c_{q,d,\|C\|_{H.S.}}(1+\|{\bf x}\|_1)\left(\beta^{-\frac{1}{N+1}}\wedge 1\right)\left\{\frac{\sqrt{1-t}}{\alpha}+
\e^{\frac{1}{N+1}}+\frac{\alpha^{\frac{1}{d+1}}M}{\e^2}+
\frac{1}{M}\right\}\notag\\
\label{optimize1}
\end{align}
Choosing $M=\e^{-\frac{1}{N+1}}$, $\e=\alpha^{\frac{N+1}{(2N+4)(d+1)}}$ and $\alpha=(1-t)^{\frac{(2N+4)(d+1)}{2((2N+4)(d+1)+1)}}$, one obtains the desired conclusion (\ref{ineg-fin}).
\qed

\subsection{Proof of Theorem \ref{t:e4m-gene}} In the proof of \cite[Theorem 4.3]{NRaop}, the following two facts have been shown:
\begin{align*}
E\left[\left(C_n(j,k)-\frac1{q_j}\langle DF_{j,n},DF_{k,n}\rangle_\HH\right)^2\right]&\leq {\rm Cov}(F_{j,n}^2,F_{k,n}^2) - 2C_n(j,k)^2\\
E\left[\|F_n\|^4\right] - E\left[\|Z_n\|^4\right] &= \sum_{j,k=1}^d\big\{{\rm Cov}(F_{j,n}^2,F_{k,n}^2) - 2C_n(j,k)^2\big\}.
\end{align*}
As a consequence, one deduces that
\[
\sum_{j,k=1}^d E\left[\left(C_n(j,k)-\frac1{q_j}\langle DF_{j,n},DF_{k,n}\rangle_\HH\right)^2\right]\leq \Delta_n.
\]
Using Proposition \ref{p:smg}, one   infers immediately that
\begin{equation}
\tau_{F_n}^{j,k}({\bf x}):=\frac{1}{q_j}E[\langle DF_{j,n},DF_{k,n}\rangle_\HH|F_n={\bf x}],\quad j,k=1,\ldots,d,
\label{liensko}
\end{equation}
defines a Stein's matrix for $F_n$, which moreover satisfies the relation
\begin{equation}
\sum_{j,k=1}^d E\left[\left(C(j,k)-\tau_{F_n}^{j,k}(F_n)\right)^2\right]
\leq\Delta_n.
\label{liendelta}
\end{equation}
Now let $\Gamma_n$ denote the Malliavin matrix of $F_n$. Thanks to \cite[Lemma 6]{NuOrt}, we know that, for any $i,j=1,\ldots,d$,
\begin{equation}\label{537}
\langle DF_{i,n},DF_{j,n}\rangle_\HH\overset{L^2(\sigma({\bf G}))}{\longrightarrow}\sqrt{q_iq_j}C(i,j)\quad\mbox{as $n\to\infty$}.
\end{equation}
Since $\langle DF_{i,n},DF_{j,n}\rangle_\HH$ lives in a finite sum of chaoses (see e.g. \cite[Chapter 5]{NP11}) and is bounded in $L^2(\sigma({\bf G}))$,
we can again apply the hypercontractive estimate \eqref{EQ : hyperC} to deduce that $\langle DF_{i,n},DF_{j,n}\rangle_\HH$ is actually bounded in
$L^p(\sigma({\bf G}))$ for every $p\geq 1$, so that the convergence in (\ref{537}) is in the sense of any of the spaces $L^p(\sigma({\bf G}))$.
As a consequence,
$
E[\det\Gamma_n]\to \det C\prod_{i=1}^d q_i=:\gamma>0,
$
and there exists $n_0$ large enough so that
\[
\inf_{n\geq n_0}E[\det\Gamma_n]>0.
\]
We are now able to deduce from Lemma \ref{technical-but-nevertheless-crucial-lemma}
 the existence of two constants $\kappa>0$ and $\alpha\in(0,\frac12]$
such that, for all ${\bf x}\in\R^d$, $t\in[\frac12,1]$ and $n\geq n_0$,
\begin{align*}
{\bf TV}(\sqrt{t}F_n+\sqrt{1-t}{\bf x},F_n)&\leq \kappa (1+\|{\bf x}\|_1)(1-t)^{\alpha}.
\end{align*}
This means that relation (\ref{ineq-ivan2-multi}) is satisfied uniformly on $n$.
Concerning (\ref{needed-multi}), again by hypercontractivity and using the
representation (\ref{liensko}), one has that, for all $\eta>0$,
\begin{align*}
  \sup_{n\geq 1} E\big[|\tau_{F_n}^{j,k}(F_n)|^{\eta+2}\big]<\infty,\quad j,k=1,\ldots,d.
\end{align*}
Finally, since $\Delta_n\to 0$ and because (\ref{liendelta}) holds true, the condition (\ref{lessthanone}) is satisfied for $n$ large enough.
The proof of (\ref{entropy}) is concluded by applying Theorem \ref{main-multi}.

\subsection{Proof of Corollary \ref{c:co}}

In view of Theorem \ref{t:e4m-gene}, one has only to prove that (b) implies (a). This is an immediate consequence of the fact that the covariance $C_n$ converges to $C$, and that the sequence $\{F_n\}$ lives in a finite sum of Wiener chaoses. Indeed, by virtue of  \eqref{EQ : hyperC} one has that
\[
\sup_{n\geq 1} E \left[ \|F_n \|^p  \right] <\infty, \quad \forall p\geq 1,
\]
yielding in particular that, if $F_n$ converges in distribution to $Z$, then  $E\|F_n \|^4 \to E\|Z \|^4$ or, equivalently, $\Delta_n \to 0$. The proof is concluded.

\bigskip

\noindent{\bf Acknowledgement}. We heartily thank Guillaume Poly for
several crucial inputs that helped us achieving the proof of Theorem
\ref{t:e4m-gene}. We are grateful to Larry Goldstein and Oliver
Johnson for useful discussions. Ivan Nourdin is partially supported by the ANR grant ANR-10-BLAN-0121.

\bibliographystyle{abbrv}

\end{document}